\newcommand{\ZB}{\mathbb{Z}}
\newcommand{\DB}{\mathbb{D}}
\newcommand{\RB}{\mathbb{R}} 
\newcommand{\TB}{\mathbb{T}} 
\newcommand{\CB}{\mathbb{C}}
\newcommand{\DC}{\mathcal{D}}
\newcommand{\HC}{\mathcal{H}}
\newcommand{\KC}{\mathcal{K}}
\newcommand{\LC}{\mathcal{L}}
\newcommand{\MC}{\mathcal{M}}
\newcommand*\conj[1]{\overline{#1}}
\newcommand*\clos[1]{\overline{#1}}
\renewcommand{\Re}{\operatorname{Re}}
\renewcommand{\Im}{\operatorname{Im}}
\providecommand*{\diff}%
	{\@ifnextchar^{\DIfF}{\DIfF^{}}}
\def\DIfF^#1{%
	\mathop{\mathrm{\mathstrut d}}%
		\nolimits^{#1}\gobblespace}
\def\gobblespace{%
	\futurelet\diffarg\opspace}
\def\opspace{%
	\let\DiffSpace\!
	\ifx\diffarg(%
	\let\DiffSpace\relax
	\else
	\ifx\diffarg[%
	\let\DiffSpace\relax
	\else
	\ifx\diffarg\{%
	\let\DiffSpace\relax
	\fi\fi\fi\DiffSpace}
\theoremstyle{plain}
\newtheorem{theorem}{Theorem}[section]
\newtheorem*{theorem*}{Theorem \ref{thm:ModelTheorem}}
\newtheorem{lemma}[theorem]{Lemma}
\newtheorem{proposition}[theorem]{Proposition}
\theoremstyle{definition}
\newtheorem{XxmpX}[theorem]{Example}
\newenvironment{example}    % this is the environment name for the input
{\pushQED{\qed}\begin{XxmpX}}
	{\popQED\end{XxmpX}}
\newtheorem{question}[theorem]{Question}
\theoremstyle{remark}
\newtheorem{remark}[theorem]{Remark}
\title{Cyclic $m$-isometries, and Dirichlet type spaces}
\author{Eskil Rydhe\thanks{e.rydhe@leeds.ac.uk. School of Mathematics, University of Leeds, Leeds LS2 9JT, UK. Supported by the Knut and Alice Wallenberg foundation. The author acknowledges interesting conversations with Jonathan Partington, Eva Gallardo--Gutiérrez, and Alexandru Aleman, and also expresses his gratitude to the anonymous referee for carefully reading this manuscript.}}
\newcommand{\MU}{\vec{\mu}}
\newcommand{\NU}{\vec{\nu}}
\begin{document}

	\maketitle
	
	\begin{abstract}
		We consider cyclic $m$-isometries on a complex separable Hilbert space. Such operators are characterized in terms of shifts on abstract spaces of weighted Dirichlet type. Our results resemble those of Agler and Stankus, but our model spaces are described in terms of Dirichlet integrals rather than analytic Dirichlet operators. The chosen point of view allows us to construct a variety of examples. An interesting feature among all of these is that the corresponding model spaces are contained in a certain subspace of the Hardy space $H^2$, depending only on the order of the corresponding operator. We also demonstrate how our framework allows for the construction of unbounded $m$-isometries.
	\end{abstract}
	
	\section{Introduction}\label{sec:Introduction}
	
	Let $\HC$ denote a complex, separable Hilbert space, and $\LC$ the corresponding class of bounded linear transformations. Given a linear (possibly unbounded) Hilbert space operator $T$, we define the sesquilinear form
	\begin{equation*}
	(x,y)\mapsto \sum_{j=0}^m(-1)^{m-j}\binom{m}{j}\langle T^{j}x,T^j y\rangle_\HC,
	\end{equation*}
	where $x$ and $y$ belong to the domain of any power of $T$, and $m\in\ZB_{\ge 0}$. We say that $T$ is an \textit{$m$-isometry} if this form vanishes. If $T\in\LC$, then we define 
	\begin{equation}\label{eq:DefinitionBetaK}
	\beta_n(T)=\sum_{j=0}^n(-1)^{n-j}\binom{n}{j}T^{*j}T^{j},\quad n\in\ZB_{\ge 0}.
	\end{equation}
	Clearly $T\in\LC$ is an $m$-isometry if and only if $\beta_m(T)=0$. Even though some parts of this paper are relevant to unbounded operators, our primary concern is with bounded ones. Therefore, when we speak of $m$-isometric operators, we tacitly assume that these are bounded, unless this assumption is explicitly questioned or contradicted. The only time that we consider unbounded operators is in relation to Example \ref{ex:Unbounded2Isometry}.
	
	The study of $m$-isometries originates from the work of Agler \cite{Agler1990:ADisconjugacyTheoremForToeplitzOperators}. A brief introduction to $m$-isometries, as well as notation, and other concepts central to this note, is made in Section \ref{sec:Preliminaries}. For now, note that a $1$-isometry is just an ordinary Hilbert space isometry. 
	
	Let $\DC_a$ denote the space of functions which are analytic on the open unit disc $\DB$, and smooth on its closure $\clos{\DB}$. Define the operator $M_z:\DC_a\to\DC_a$ by $(M_zf)(z)=zf(z)$. By continuity, we may extend $M_z$ to a bounded linear operator $M_\lambda$ acting on the \textit{Hardy space} $\DC_\lambda^2=H^2$ (this choice of notation is explained in Remark \ref{remark:InterchangeableNotation}). The operator $M_\lambda$ is an isometry, $\dim\ker M_\lambda^*=1$, and $\bigcap_{n\in\ZB_{\ge 0}}M_\lambda^nH^2=\{0\}$. It is a classical result that these properties determine $M_\lambda$ up to unitary transformations:
	
	\begin{proposition}\label{prop:1Isometries}
		Let $T\in\LC$ be an isometry, such that $\dim\ker T^*=1$, and $\bigcap_{n\in\ZB_{\ge 0}}T^n\HC=\{0\}$. Then there exists a unitary map $U:\HC\to H^2$ such that $T=U^*M_{\lambda}U$.
	\end{proposition}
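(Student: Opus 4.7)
The proof follows the classical Wold decomposition strategy, specialised to the pure, multiplicity-one case. My plan breaks into three stages.

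\textbf{Stage 1: Reduction to an orthogonal sum.} Since $T$ is an isometry, $T^*T=I$, so $T\HC$ is closed and $\HC=\ker T^*\oplus T\HC$. I would then iterate this: applying $T^k$ to the identity $\HC=\ker T^*\oplus T\HC$ and using that $T^k$ is isometric (hence preserves the orthogonal decomposition onto its range), I obtain
\begin{equation*}
\HC=\bigoplus_{k=0}^{N-1}T^k(\ker T^*)\;\oplus\; T^N\HC
\end{equation*}
for every $N\in\ZB_{\ge 1}$. The hypothesis $\bigcap_{n\ge 0}T^n\HC=\{0\}$ now lets me pass to the limit: if $x\in\HC$ is orthogonal to each $T^k(\ker T^*)$, then $x\in T^N\HC$ for every $N$, hence $x=0$. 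Consequently $\HC=\bigoplus_{k\ge 0}T^k(\ker T^*)$.

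\textbf{Stage 2: Construction of the orthonormal basis.} Because $\dim\ker T^*=1$, I pick a unit vector $e\in\ker T^*$. The vectors $\{T^ke\}_{k\ge 0}$ are then orthonormal: norms are preserved since $T$ is isometric, and orthogonality across different $k$ follows from Stage 1 together with the fact that, within each summand $T^k(\ker T^*)=\mathrm{span}\{T^ke\}$, there is only one direction to consider. Thus $\{T^ke\}_{k\ge 0}$ is an orthonormal basis of $\HC$.

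\textbf{Stage 3: Intertwining.} The monomials $\{z^k\}_{k\ge 0}$ form an orthonormal basis of $H^2$, so I can define $U:\HC\to H^2$ by $U(T^ke)=z^k$ and extend linearly and continuously. This $U$ is unitary by construction. The intertwining $UT=M_\lambda U$ needs only be checked on the basis, where $UT(T^ke)=U(T^{k+1}e)=z^{k+1}=M_\lambda(z^k)=M_\lambda U(T^ke)$, which closes the proof.

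No single step is a genuine obstacle here; the proposition is really a packaging of the Wold decomposition together with the observation that a shift of multiplicity one is $M_\lambda$ on $H^2$. The only point requiring mild care is verifying that the orthogonal sum in Stage 1 truly exhausts $\HC$, which is where the purity hypothesis $\bigcap_n T^n\HC=\{0\}$ is used; everything else is a routine identification.
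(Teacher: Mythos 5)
Your argument is correct: the Wold-type decomposition $\HC=\bigoplus_{k\ge 0}T^k(\ker T^*)$, the purity hypothesis used exactly where you use it, the orthonormal basis $\{T^ke\}_{k\ge 0}$, and the intertwining $UT=M_\lambda U$ checked on basis vectors together give $T=U^*M_\lambda U$. The paper offers no proof of this proposition, treating it as the classical result of Halmos (and of Rosenblum--Rovnyak, Chapter 1); your argument is precisely that standard one, so there is nothing to add or repair.
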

	
	An operator $T\in\LC$ satisfying $\bigcap_{n\in\ZB_{\ge 0}}T^n\HC=\{0\}$ is often called \textit{analytic}. A more general statement than Proposition \ref{prop:1Isometries} is that an analytic isometry $T$ is determined by $\dim\ker T^*$. This was observed in \cite{Halmos1961:ShiftsOnHilbertSpaces}. A systematic treatment is given in \cite[Chapter 1]{Rosenblum-Rovnyak1985:HardyClassesAndOperatorTheory}. 
	
	A motivation for choosing the word ``analytic'' is that, if $M_X$ denotes multiplication by $z$, defined on a space $X$ of analytic functions on $\DB$, and if $f\in\bigcap_{n\in\ZB_{\ge 0}}M_X^nX$, then $f$ vanishes identically. This means that, if we want $T\in\LC$ to resemble $M_X$ in a reasonable sense, then it is natural to assume that $T$ is analytic. 
	
	Given a finite positive (regular Borel) measure $\mu$ on $\TB$, we let $M_{\lambda,\mu}$ denote the extension of $M_z$ to the \textit{$\mu$-weighted Dirichlet space $\DC_{\lambda,\mu}^2$}, i.e. the space of analytic functions $f:\DB\to\CB$ for which
	\[
	\|f\|_{\lambda,\mu}^2:=\lim_{r\to 1^-}\frac{1}{2\pi r}\int_{r\TB}|f(\zeta)|^2 \diff \lambda(\zeta)+\frac{1}{\pi}\int_{\DB}|f'(z)|^2 P_\mu(z)\diff A(z)<\infty.
	\]
	In the above expression, $\diff \lambda$ and $\diff A$ respectively signify integration with respect to arc length measure on the unit circle $\TB$, and area measure on $\CB$. $P_\mu$ denotes the Poisson extension of $\mu$ to $\DB$.	
	
	The following generalization of Proposition \ref{prop:1Isometries} is due to Richter
	\cite[Theorems 5.1 and 5.2]{Richter1991:ARepresentationTheoremForCyclicAnalyticTwo-Isometries}: 	
	
	\begin{proposition}\label{prop:2Isometries}
		Let $\mu$ be a positive finite measure on $\TB$. Then $M_{\lambda,\mu}$ is a bounded analytic $2$-isometry, with $\dim\ker M_{\lambda,\mu}^*=1$. Conversely, if $T\in\LC$ is an analytic $2$-isometry, with $\dim\ker T^*=1$, then there exists a finite positive measure $\mu$ on $\TB$ and a unitary map $U:\HC\to \DC_{\lambda,\mu}^2$ such that $T=U^*M_{\lambda,\mu}U$. Moreover, if $T_1$ and $T_2$ are associated with the respective measures $\mu_1$ and $\mu_2$, then $\mu_1=\mu_2$ if and only if $T_1$ and $T_2$ are unitarily equivalent.
	\end{proposition}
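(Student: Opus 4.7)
The plan is to treat the forward direction as a direct computation, to handle the converse by building a functional model around a cyclic vector, and to read uniqueness off the model.

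\textbf{Forward direction.} First I would check that $(\DC_{\lambda,\mu}^2,\|\cdot\|_{\lambda,\mu})$ is a Hilbert space embedding continuously in $H^2$; completeness should follow by the usual Fatou-type argument once one notes that the $L^2(\lambda)$ part already controls the Hardy norm. Boundedness of $M_{\lambda,\mu}$ is routine given $(zf)'=f+zf'$ and the estimate $\int_\DB|f|^2P_\mu\,dA\lesssim\|f\|_{L^2(\lambda)}^2$ (by the mean-value property of $|f|^2$). The 2-isometric identity $\beta_2(M_{\lambda,\mu})=0$ reduces to $\|z^2f\|_{\lambda,\mu}^2-2\|zf\|_{\lambda,\mu}^2+\|f\|_{\lambda,\mu}^2=0$: the $L^2(\lambda)$ terms cancel because $M_\lambda$ is an isometry, and the Dirichlet part will follow from expanding $(z^kf)'$ and observing that the quadratic terms in $f,\,f'$ telescope against the binomial coefficients $\binom{2}{j}$. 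Analyticity comes for free from the $H^2$ embedding, and $\ker M_{\lambda,\mu}^*$ is one-dimensional since $z\DC_{\lambda,\mu}^2$ is the codimension-one subspace of functions vanishing at $0$.

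\textbf{Converse direction.} Given $T$, set $B:=T^*T-I$. The 2-isometric identity rewrites as $T^*BT=B$, from which induction yields $\|T^nx\|^2=\|x\|^2+n\langle Bx,x\rangle$ and $T^{*k}T^k=I+kB$; in particular $B\ge 0$. Fix a unit $e\in\ker T^*$. The first subgoal is cyclicity $\bigvee_{n\ge 0}T^ne=\HC$, which I would establish by adapting the dimension-counting argument from the Wold decomposition, exploiting $\dim\ker T^*=1$ together with analyticity. For a polynomial $p(z)=\sum_k a_kz^k$, the relations $T^*e=0$ and $T^{*k}T^k=I+kB$ give
\[
\|p(T)e\|^2=\|p\|_{H^2}^2+b_0\sum_k k|a_k|^2+2\Re\!\sum_{j>k\ge 1}k\,a_j\bar{a}_k\,b_{j-k},\qquad b_n:=\langle BT^ne,e\rangle.
\]
Next I would verify that $(p,q)\mapsto\langle Bp(T)e,q(T)e\rangle$ is a positive semidefinite form, and that the invariance $T^*BT=B$ makes $(b_n)_{n\ge 0}$, extended by $b_{-n}:=\overline{b_n}$, a positive-definite sequence on $\ZB$; Herglotz's theorem then supplies a finite positive measure $\mu$ on $\TB$ with $b_n=\hat\mu(-n)$. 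A Fourier computation in polar coordinates rewrites $\tfrac{1}{\pi}\int_\DB|p'|^2P_\mu\,dA$ as $\sum_{j,k\ge 1}\min(j,k)\,a_j\bar{a}_k\,\hat\mu(k-j)$, which should match the formula above term by term. The map $p(T)e\mapsto p$ is then isometric; cyclicity together with density of polynomials in $\DC_{\lambda,\mu}^2$ (itself requiring a separate check) extend it to the required unitary $U$ intertwining $T$ and $M_{\lambda,\mu}$. Uniqueness then follows: any unitary intertwiner transports $B$ and $e$ equivariantly, preserves all moments $b_n$, and Herglotz yields the same $\mu$.

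\textbf{Main obstacles.} The hardest step will be extracting $\mu$ from the abstract data $(T,B,e)$ and matching the two norm formulas --- essentially a Fourier-theoretic bookkeeping whose key positivity input comes from Herglotz. A secondary delicate point is cyclicity of $e\in\ker T^*$: the Wold-type argument must be genuinely adapted to 2-isometries rather than merely quoted, and likewise the density of polynomials in $\DC_{\lambda,\mu}^2$ will need a separate verification from the definition of the norm.
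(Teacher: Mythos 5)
Your converse hinges on the cyclicity of the unit vector $e\in\ker T^*$, and this is where the proposal has a genuine gap. For an analytic $2$-isometry, the statement that $\bigvee_{n\ge 0}T^n\ker T^*=\HC$ is precisely Richter's wandering subspace theorem \cite{Richter1988:InvariantSubspacesOfTheDirichletShift}, quoted in the introduction of this paper; it is a substantial theorem, not something one gets by ``adapting the dimension-counting argument from the Wold decomposition''. The Wold argument rests on the mutual orthogonality of the subspaces $T^n\ker T^*$, which holds only for isometries; for a $2$-isometry these subspaces are not orthogonal, no soft substitute is known (the paper emphasizes that the analogous question is still open for $m\ge 3$), and your sketch contains no idea that would replace Richter's analysis. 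So as written the converse silently relies on a deep external theorem that you propose to re-derive by a method that does not transfer; you must either quote \cite{Richter1988:InvariantSubspacesOfTheDirichletShift} or supply a genuinely new proof. Note also that the paper itself does not prove this proposition but cites Richter; its own Theorem \ref{thm:ModelTheorem} takes cyclicity as a hypothesis, and your moment-sequence argument (with $B=T^*T-I$, $T^{*k}T^k=I+kB$, positivity of $(b_n)$, Herglotz, and the coefficient identity $\tfrac1\pi\int_\DB|p'|^2P_\mu\diff A=\sum_{j,k\ge1}(j\wedge k)a_j\conj{a_k}\hat\mu(k-j)$) is exactly the $m=2$ case of the paper's machinery (Proposition \ref{prop:PropertiesOfBetak}, Lemmas \ref{lemma:DirichletIntegralAsADoubleSum} and \ref{lemma:SmoothFunctionalCalculusIsometry}); that part of the plan is sound, as is the uniqueness argument, since here the cyclic vector is canonical up to a unimodular scalar.

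Two smaller repairs are needed in the forward direction. The estimate $\int_\DB|f|^2P_\mu\diff A\lesssim\mu(\TB)\|f\|_{H^2}^2$ is true but does not follow from the mean-value property of $|f|^2$: the naive route via subharmonicity and Fubini fails because $\int_\DB P_{\delta_\zeta}(z)^2\diff A(z)=\infty$; the correct input is that $P_\mu\diff A$ is a Carleson measure (Proposition \ref{prop:PmudAIsCarlesonMeasure} together with the Carleson embedding theorem), or an equivalent Hardy-inequality computation with the matrix $(1+j\vee k)^{-1}$. Likewise, after expanding $(z^kf)'$ the $2$-isometry identity does not telescope pointwise in the integrand; it is an integrated identity, obtained cleanly from the Fourier-coefficient formula (Lemma \ref{lemma:DirichletIntegralAsADoubleSum} and Proposition \ref{prop:MzPlaysNicelyWithDirichletIntegrals}) or from the boundary identity $\DC_{\mu,1}(zf)=\DC_{\mu,1}(f)+\int_\TB|f|^2\diff\mu$, and to pass from polynomials to all of $\DC_{\lambda,\mu}^2$ you do need the density of polynomials that you flag, which is itself a Richter--Sundberg level fact, cf. \cite{ElFallah-Kellay-Mashreghi-Ransford2014:APrimerOnTheDirichletSpace}. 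These points are fixable; the cyclicity step is the one requiring a real new input.
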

	
	The above result is a so-called \textit{model theorem}: $M_{\lambda,\mu}$ acting on the \textit{model space} $\DC_{\lambda,\mu}^2$ is an \textit{operator model} for $T$. Among model theorems, one should distinguish the so-called \textit{universal} model theorems, where every $T$ of a certain class is modelled by the same operator $S$ restricted to an $S$-invariant subspace (in general) depending on $T$, e.g. \cite[Section 1.5]{Rosenblum-Rovnyak1985:HardyClassesAndOperatorTheory}. In this sense, Proposition \ref{prop:2Isometries} is not a universal model theorem for the class of analytic $2$-isometries with $\dim\ker T^*=1$, but rather describes an individual model for each such operator.
	
	Let $\bigvee S$ denote the closed linear hull of $S\subset\HC$. An operator $T$ is called \textit{cyclic} if there exists a vector $e\in\HC$ such that $\bigvee\{T^ne;n\in\ZB_{\ge 0}\}=\HC$. The vector $e$ is also called \textit{cyclic}. Another result by Richter \cite[Theorem 1]{Richter1988:InvariantSubspacesOfTheDirichletShift} states that if $T$ is an analytic $2$-isometry, then $T$ has the \textit{wandering subspace property}, i.e. $\bigvee\{T^n\ker T^*;n\in\ZB_{\ge 0}\}=\HC$. In particular, if $T$ is an analytic $2$-isometry, and $\dim\ker T^*=1$, then any non-zero vector $e\in\ker T^*$ is cyclic for $T$. Proposition \ref{prop:2Isometries} also has a natural analogue where the condition $\dim\ker T^*=1$ is omitted, see \cite{Olofsson2004:AVonNeumann-WoldDecompositionOfTwo-Isometries}.
	
	The wandering subspace property for higher order isometries has been studied by Shimorin \cite{Shimorin2001:Wold-TypeDecompositionsAndWanderingSubspacesForOperatorsCloseToIsometries}. However, it remains unknown whether or not an arbitrary analytic $m$-isometry, with $m\ge 3$, necessarily has the wandering subspace property. For this reason, we will henceforth replace the corresponding hypothesis in Proposition \ref{prop:2Isometries}, that $T$ is analytic and $\dim\ker T^*=1$, with the assumption that $e$ is a cyclic unit vector for $T$. The normalization $\|e\|_\HC^2=1$ is by no means essential, but is added for convenience. We do not insist that $e\in\ker T^*$.
	
	Let $\DC'$ denote the space of distributions on $\TB$. Given $\mu\in\DC'$ and $f\in\DC_a$, we define the corresponding \textit{weighted Dirichlet integral} of order $n\in\ZB_{\ge 1}$ by
	\begin{equation}
	\DC_{\mu,n}(f):=\lim_{r\to 1^{-}}\frac{1}{n!(n-1)!\pi }\int_{r\DB}|f^{(n)}(z)|^2P_{\mu}(z)(1-|z|^2)^{n-1}\diff A(z).
	\end{equation}
	Here $f^{(n)}=\frac{\diff^nf}{\diff z^n}$. For $n=0$ we define
	\begin{equation}
	\DC_{\mu,0}(f) := \lim_{r\to 1^{-}} \frac{1}{2\pi r}\int_{r\TB}|f(\zeta)|^2P_\mu(\zeta)\diff \lambda (\zeta)=\mu(|f|^2).
	\end{equation}
	The last equality follows from Lemma \ref{lemma:DirichletIntegralAsADoubleSum}, which also implies that $\DC_{ \mu,n}(f)$ is well-defined. It will be convenient to adopt the convention that $\DC_{ \mu,n}(f)=0$ whenever $n<0$.
	
	Let $\MU=(\mu_0,\ldots,\mu_{m-1})\in\left(\DC'\right)^m$, and define the quadratic form
	\begin{equation}\label{eq:DefWeightedDirichletNorm}
	\|f\|_{\MU}^2=\sum_{n=0}^{m-1}\DC_{\mu_n,n}(f),\quad f\in\DC_a.
	\end{equation}
	We say that the tuple $\MU$ is \textit{allowable} if there exists $C>0$ such that 
	\begin{equation}\label{eq:AllowableSequence}
	0\le \|M_zf\|_{\MU}^2\le C^2\|f\|_{\MU}^2,\quad f\in\DC_a,
	\end{equation}
	i.e. $M_z$ is a bounded operator with respect to the positive semi-definite form $\|\cdot\|_{\MU}^2$. Since $\|1\|_{\MU}^2=\hat \mu_0(0)$, \eqref{eq:AllowableSequence} implies that $\|\cdot\|_{\MU}^2\equiv 0$ if and only if $\hat \mu_0(0)=0$. We call the allowable $m$-tuple \textit{normalized} if $\hat \mu_0(0)=1$. 
	
	Given an allowable $m$-tuple $\MU$, we let $\KC_{\MU}=\ker\|\cdot\|_{\MU}$. Define $\DC_{\MU}^2$ as the completion of $\DC_a/\KC_{\MU}$ with respect to the norm $\|\cdot\|_{\MU}$. By \eqref{eq:AllowableSequence}, $M_z$ is well defined on $\DC_a/\KC_{\MU}$, and may be uniquely extended to a bounded linear operator $M_{\MU}:\DC_{\MU}^2\to\DC_{\MU}^2$. 
	
	Our main result is stated and proved in Section \ref{sec:ModelTheorem}. We restate it here for convenience:
	
	\begin{theorem*}
		If $\MU\in\left(\DC'\right)^m$ is a normalized allowable $m$-tuple, then the operator $M_{\MU}$ is a bounded $m$-isometry, with $1$ as a cyclic unit vector.	Conversely, if $T\in\LC$ is an $m$-isometry with a cyclic unit vector $e$, then $\MU\in\left(\DC'\right)^m$ given by
		\[
		\hat \mu_n(k)=\conj{\hat \mu_n(-k)}=\langle \beta_{n}(T)e,T^ke\rangle_\HC,\quad k\in\ZB_{\ge 0},
		\]
		is a normalized allowable $m$-tuple, and there exists a unitary map $U:\HC\to \DC_{\MU}^2$ such that $T=U^*M_{\MU}U$, and $Ue=1$. 
		
		If $T_j:\HC_j\to\HC_j$, $j\in\{1,2\}$, are bounded $m$-isometries with cyclic vectors $e_j$, then the associated $m$-tuples $\MU_{j}$ coincide if and only if there exists a unitary map $U:\HC_1\to\HC_2$ such that $T_1=U^*T_2U$ and $Ue_1=e_2$.
	\end{theorem*}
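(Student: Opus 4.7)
The plan is to prove the three assertions in sequence: boundedness and cyclicity of $M_{\MU}$; the $m$-isometric identity $\beta_m(M_{\MU})=0$; the construction of the unitary $U$ in the converse; and finally uniqueness.

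For the forward direction I would fix a normalized allowable $\MU$ and note that boundedness of $M_{\MU}$ is built into allowability, that $1$ is a unit vector by normalization (since $\|1\|_{\MU}^2=\hat\mu_0(0)=1$), and that polynomials are dense in $\DC^2_{\MU}$ by construction, so $1$ is cyclic. The heart of the matter is showing $\beta_m(M_{\MU})=0$. By polarization and density, this reduces to verifying
\[
\sum_{j=0}^m (-1)^{m-j}\binom{m}{j}\|z^j p\|_{\MU}^2 = 0
\]
for every polynomial $p$, and hence by \eqref{eq:DefWeightedDirichletNorm} to the same identity with $\|\cdot\|_{\MU}^2$ replaced by $\DC_{\mu_n,n}$ for each $0\le n\le m-1$. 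I would apply Lemma~\ref{lemma:DirichletIntegralAsADoubleSum} to rewrite $\DC_{\mu_n,n}(z^j p)$ as a double sum over the Fourier coefficients of $p$, in which the $j$-dependence is concentrated in a single factor of the form $\binom{\min(k,l)+j}{n}$. Since $j\mapsto\binom{s+j}{n}$ is a polynomial of degree $n<m$, its $m$-th finite difference vanishes identically, giving the required cancellation.

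For the converse I would start from an $m$-isometry $T$ with cyclic unit vector $e$ and define $U_0\colon p(T)e\mapsto[p]$ on the dense subspace spanned by $\{T^k e\}_{k\ge 0}$. Using \eqref{eq:DefinitionBetaK} and its Möbius inverse $T^{*n}T^n=\sum_{k=0}^n\binom{n}{k}\beta_k(T)$, together with $\beta_n(T)=0$ for $n\ge m$, I derive the moment expansion
\[
\langle T^j e,T^k e\rangle_\HC = \sum_{n=0}^{\min(j,k,m-1)}\binom{\min(j,k)}{n}\hat\mu_n(k-j).
\]
Matching this against the double-sum formula from Lemma~\ref{lemma:DirichletIntegralAsADoubleSum} yields $\|p(T)e\|_\HC^2=\|p\|_{\MU}^2$ for every polynomial $p$. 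This simultaneously shows that $\|\cdot\|_{\MU}^2$ is positive semi-definite, that $\hat\mu_0(0)=\|e\|_\HC^2=1$, and that $\|M_z p\|_{\MU}^2\le\|T\|^2\|p\|_{\MU}^2$; hence $\MU$ is normalized and allowable. The polynomial growth of $\|T^k\|$ for $m$-isometries ensures that each $\mu_n$ is a genuine element of $\DC'$. A standard density argument extends $U_0$ to a unitary $U\colon\HC\to\DC^2_{\MU}$, and the intertwining relations $UT=M_{\MU}U$ and $Ue=1$ are immediate on polynomials.

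The uniqueness statement then follows at once: if $\MU_1=\MU_2$, then $U_2^{-1}U_1\colon\HC_1\to\HC_2$ is the required unitary equivalence sending $e_1$ to $e_2$; conversely, any unitary equivalence between the pairs $(T_1,e_1)$ and $(T_2,e_2)$ preserves the inner products defining $\MU_j$, so the tuples agree. The main obstacle throughout will be the combinatorial bookkeeping in the double-sum expansion of $\DC_{\mu_n,n}(z^j p)$---keeping track of which terms are truncated because of vanishing binomials, and making sure the index shifts after multiplication by $z^j$ line up correctly so that the finite-difference identity applies. Once the combinatorial core is in place, the remainder is a routine exercise in polarization and density.
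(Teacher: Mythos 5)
Your proposal follows essentially the same route as the paper: the forward direction rests on the double-sum formula of Lemma \ref{lemma:DirichletIntegralAsADoubleSum} (your $m$-th finite difference of $j\mapsto\binom{s+j}{n}$ is exactly the iterated form of Proposition \ref{prop:MzPlaysNicelyWithDirichletIntegrals}), and the converse rests on the binomial inversion $T^{*k}T^{k}=\sum_{j}\binom{k}{j}\beta_j(T)$, i.e.\ Proposition \ref{prop:PropertiesOfBetak} (ii) with $n=0$, followed by the same unitary construction and uniqueness argument. The only point stated too casually is the density of polynomials in $\DC_{\MU}^2$ (the completion is taken over $\DC_a$, not over polynomials), which the paper secures via the Fourier partial-sum approximation \eqref{eq:ApproximationByFourierPartialSums}, itself a dominated-convergence consequence of Lemma \ref{lemma:DirichletIntegralAsADoubleSum} that your setup already provides.
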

	
	\begin{remark}
		Note that for $\MU_1$ and $\MU_2$ to coincide, it does not suffice that $T_1$ and $T_2$ are unitarily equivalent. The unitary map must also respect the cyclic vectors. It may therefore be appropriate to regard Theorem \ref{thm:ModelTheorem} as a model for the \textit{tuple} $(T,e)$, rather than just $T$. The case $\mu_0=\lambda$ corresponds precisely to the case where $e$ has unit length, and $e\in\ker T^*$. This condition determines the cyclic vector up to multiplication with a unitary scalar, so that $(T,e)$ is determined by $T$.
	\end{remark}
	
	\begin{remark}\label{remark:InterchangeableNotation}
		We interchangeably use the notations $M_{\mu_{0},\ldots,\mu_{m-1}}$ and $ M_{T,e}$, in place of $M_{\MU}$. Note that $\DC_\lambda^2=H^2$, since polynomials are dense in $H^2$. Hence the notation $M_\lambda:H^2\to H^2$. Similarly, if $\mu$ is a finite positive measure on $\TB$, then $\DC_{\lambda,\mu}^2$ coincides with the space of analytic functions $f:\DB\to\CB$ for which 
		\[
		\frac{1}{\pi}\int_{\DB}|f'(z)|^2P_\mu(z)\diff A(z)<\infty,
		\]
		e.g. \cite[Corollary 7.3.4]{ElFallah-Kellay-Mashreghi-Ransford2014:APrimerOnTheDirichletSpace}.
	\end{remark}
	\begin{remark}
		The above model is consistent in the following sense: Suppose that $\NU\in\left(\DC'\right)^m$ is a normalized allowable $m$-tuple. By Theorem \ref{thm:ModelTheorem}, $M_{\NU}$ is an $m$-isometry with cyclic unit vector $1$. Moreover, there exists an allowable tuple $\MU$, and a unitary map $U:\DC_{\NU}^2\to\DC_{\MU}^2$, such that $M_{\NU}=U^*M_{\MU}U$, and $U1=1$. By the final conclusion of Theorem \ref{thm:ModelTheorem}, $\NU=\MU$.
	\end{remark}
	
	An extensive study of $m$-isometries was undertaken by Agler and Stankus in \cite{Agler-Stankus1995:m-IsometricTransformationsOfHilbertSpaceI,Agler-Stankus1995:m-IsometricTransformationsOfHilbertSpaceII,Agler-Stankus1995:m-IsometricTransformationsOfHilbertSpaceIII}. One of their main results is \cite[Theorem 3.23]{Agler-Stankus1995:m-IsometricTransformationsOfHilbertSpaceI}, which is also a model theorem for cyclic $m$-isometries. However, their model is quite different from ours, even in the case of Proposition \ref{prop:2Isometries}. In fact, a motivation for this paper has been that the Agler--Stankus model seems less open to function theoretic exploitations. We will briefly compare the two models in Section \ref{sec:AglerStankus}.
	
	In Section \ref{sec:MorePropertiesOfDirichletIntegrals}, we derive some additional properties of Dirichlet integrals. These are then used for studying allowable $m$-tuples in Section \ref{sec:AllowableMTuples}:
	
	From previous work by Richter \cite{Richter1991:ARepresentationTheoremForCyclicAnalyticTwo-Isometries}, and Agler--Stankus \cite{Agler-Stankus1995:m-IsometricTransformationsOfHilbertSpaceI}, it is essentially known that $\mu_{m-1}$ is a positive finite measure, whenever the $m$-tuple $(\mu_0,\ldots,\mu_{m-1})$ is allowable. An extension of this is Proposition \ref{prop:PropertiesOfmuk} below, which gives a priori lower bounds on the regularity of each $\mu_n$. The most obvious deficiency of the present paper is that this result is not accompanied by a useful characterization of allowable $m$-tuples. However, we are able to present some examples of sufficient (and insufficient) conditions. Our results yield explicit examples of the following:
	\begin{itemize}
		\item Allowable tuples for which $M_{\MU}:\DC_{\MU}^2\to \DC_{\MU}^2$ is norm-expanding (Theorem \ref{thm:SufficientPositiveMeasures}).
		\item Allowable tuples for which $M_{\MU}:\DC_{\MU}^2\to \DC_{\MU}^2$ is not norm-expanding (Remark \ref{remark:NormExpansion}). For $m=2$, such examples are known to not exist, e.g. \cite[Lemma 8.2.3]{ElFallah-Kellay-Mashreghi-Ransford2014:APrimerOnTheDirichletSpace}.
		\item Allowable tuples for which $\DC_{ \mu_0,0}(f)+\DC_{\mu_{m-1}}(f)$ does not control $\|f\|_{\MU}^2$ (Example \ref{ex:OuterTermsDoNotControlTheMiddle}). This contrasts to the theory of Sobolev spaces, where similar estimates are standard, e.g. \cite[Chapter 1]{Mazya2011:SobolevSpacesWithApplicationsToEllipticPartialDifferentialEquations}.
		\item Allowable tuples where, for some $n$ and $f$, the Dirichlet integral $\DC_{\mu_n,n}(f)$ is indeed conditionally convergent (Example \ref{ex:DirichletIntegralIsConditionallyConvergent}).
		\item Tuples (not allowable) for which $M_{\MU}$ is an unbounded, densely defined $2$-isometry (Example \ref{ex:Unbounded2Isometry}).
	\end{itemize}
	It seems to be of interest that, for all allowable $m$-tuples that are explicitly described in this paper, it holds that $\DC_{ \lambda,m-2}(f)\lesssim \|f\|_{\MU}^2$, provided that $\mu_{m-1}$ is non-vanishing.
	
	Section \ref{sec:ConcludingRemarks} contains some concluding remarks.
	
	\section{Notation and preliminaries}\label{sec:Preliminaries}
	We use the standard notation $\ZB$, $\RB$, and $\CB$ for the respective sets of integers, real numbers, and complex numbers. In addition we write $\ZB_{\ge x}=\{n\in\ZB;n\ge x\}$, $\DB=\{z\in\CB;|z|<1\}$ and $\TB=\{\zeta\in\CB;|\zeta|=1\}$. We let $\diff \lambda$ and $\diff A$ respectively signify integration with respect to arc length measure and area measure on $\CB$. By $\delta_z$, we denote a unital point mass at $z\in\CB$, while $\delta_{x,y}=\delta_{0}(\{x-y\})$ is Kronecker's delta. We will also use the lattice operators $x\wedge y=\min\{x,y\}$ and $x\vee y=\max\{x,y\}$, where $x,y\in\RB$.
	
	For $n,j\in\ZB_{\ge 0}$ we let $\binom{n}{j}=\frac{n!}{j!(n-j)!}$ denote the standard binomial coefficients. We also define the Pochhammer symbols $(n)_j=j!\binom{n}{j}$. Note that if $j$ is fixed, then $\binom{n}{j}$ is a polynomial in $n$, its degree $j$. We adopt the conventions that $0!=1$, and $\binom{n}{j}=0$ whenever $j\in \ZB_{<0}\cup\ZB_{>n}$. We will frequently use the identity
	\begin{equation}\label{eq:BinomialIdentity}
	\binom{n+1}{j}=\binom{n}{j}+\binom{n}{j-1}.
	\end{equation}
	By our last convention, \eqref{eq:BinomialIdentity} holds for $n\in\ZB_{\ge 0}$ and $j\in\ZB$.
	
	Given two parametrized sets $\{A_i\}_{i\in I},\{B_i\}_{i\in I}\subset (0,\infty)$, we write $A_i\lesssim B_i$ for $i\in I$ to indicate the existence of $C\in (0,\infty)$ such that $A_i\le C B_i$ whenever $i\in I$. We refer to $C$ as a \textit{bound}, and say that $A_i$ is \textit{bounded} or \textit{controlled} by $B_i$. Typically, $I$ will be implicit from the context. We then write $A_i\lesssim B_i$. If $A_i\lesssim B_i$ and $B_i\lesssim A_i$, then we write $A_i\approx B_i$, and say that $A_i$ is \textit{comparable} to $B_i$.
	
	Denote by $\DC$ the Fréchet space of smooth functions on $\TB$, equipped with the seminorms
	\begin{equation}
	\|f\|_\alpha = \left(\sum_{k\in\ZB}|\hat f(k)|^2(1+|k|)^{\alpha}\right)^{1/2},\quad \alpha>0.
	\end{equation}
	Here $\hat f(k)=\frac{1}{2\pi}\int_\TB f(\zeta)\conj{\zeta}^k\diff \lambda(\zeta)$. A function $f:\TB\to\CB$ belongs to $\DC$ if and only if $f\in L^1(\TB,\diff\lambda)$ and $|\hat f(k)|\lesssim (1+|k|)^{-N}$ whenever $N\in\ZB_{\ge 0}$, i.e. $\|f\|_\alpha <\infty $ for every $\alpha>0$. Note that if $f\in\DC$, then its Fourier partial sums given by $S_nf(\zeta)=\sum_{|k|\le n}\hat f (k) \zeta^k$ converge to $f$ in $\DC$ as $n\to\infty$.
	
	The topological dual of $\DC$, i.e. the space of distributions on $\TB$, is denoted by $\DC'$. Given $\mu\in\DC'$ we define $\hat{\mu}(k)=\mu(\zeta^{-k})$. Since $\TB$ is compact, continuity of $\mu$ implies that $|\hat{\mu}(k)|\lesssim(1+|k|)^N$ for some $N\in\ZB_{\ge 0}$. The smallest such $N$ we call the \textit{(Fourier-)order} of $\mu$. Since trigonometric polynomials are dense in $\DC$, and any $\mu\in\DC'$ has finite order, it holds that $\mu\left(\conj{f}\right)=\sum_{k\in\ZB}\hat \mu (k)\conj{\hat f(k)}$, where the series is absolutely convergent. Conversely, this series defines an element $\mu \in \DC'$ whenever $(\hat \mu (k))_{k\in\ZB}$ is a sequence satisfying $|\hat{\mu}(k)|\lesssim(1+|k|)^N$ for some $N\in\ZB_{\ge 0}$.
	
	Given $\mu\in\DC'$ we denote by $P_\mu$ its Poisson extension to $\DB$, i.e.
	\[
	P_\mu (z):=\mu(P_z)=\sum_{k=0}^\infty \hat \mu (k) z^k+\sum_{k=1}^\infty \hat \mu (-k)\conj{z}^k,\quad z\in\DB,
	\]
	where
	\[
	P_z(\zeta):=\frac{1-|z|^2}{|\zeta-z|^2}=\sum_{k=0}^\infty \left(\conj{\zeta}z\right)^k+\sum_{k=1}^\infty \left(\zeta \conj{z}\right)^k,\quad \zeta \in\TB,
	\]
	is the standard Poisson kernel with respect to $z$.	By means of Poisson extensions, we may regard $\DC'$ as the space of harmonic functions with Taylor coefficients having moderate growth. Similarly, $\DC$ is identified with the space of harmonic functions with Taylor coefficients having rapid decay. By $\DC_a\subset\DC $ we denote the subspace of analytic functions. 
	
	Some distributions $\mu\in\DC'$ can be represented as integration against a finite measure. We somewhat abusively then say that the distribution \textit{is} a finite measure, and write $\mu(f)=\int_{\TB}f\diff\mu $. 
	
	If $\mu$ is a finite positive measure, then it's Poisson extension $P\mu$ satisfies
	\begin{equation}\label{eq:PoissonKernelEstimates}
	\mu(\TB)(1-|z|^2)\lesssim P_\mu(z),\quad\textnormal{and}\quad P_\mu(z)\lesssim \mu(\TB)(1-|z|^2)^{-1}.
	\end{equation}
	This follows from the Poisson kernel estimates
	\[
	\frac{1-|z|}{1+|z|}\le\frac{1-|z|^2}{|\zeta-z|^2}\le\frac{1+|z|}{1-|z|}.
	\]
	
	A harmonic function on $\DB$ is positive if and only if it is the Poisson extension of a finite positive measure, c.f. the proof of \cite[Chapter I, Theorem 3.5]{Garnett2007:BoundedAnalyticFunctions}. By considering the Jordan decomposition of a signed measure, one obtains that a real-valued harmonic function on $\DB$ is the Poisson extension of a real-valued finite measure if and only if it can be written as the difference between two positive harmonic functions.
	
	If $\mu$ is a finite positive measure, then it is \textit{positive as a distribution}, i.e. $\mu(f)\ge 0$ for each $f\in\DC$ with $f\ge 0$. Any positive distribution is in fact a finite positive measure, e.g. \cite[Theorem 2.1.7]{Hormander1990:TheAnalysisOfLinearPartialDifferentialOperatorsI}. By considering convolutions with Fejér kernels, any $f\in\DC$ with $f\ge 0$ may be approximated in $\DC$ by a sequence of polynomials $p_n\ge0$. By the Fejér--Riesz theorem, $p_n=|g_n|^2$, where $g_n\in\DC_a$. Hence, $\mu\in\DC'$ is positive if and only if $\mu(|g|^2)\ge 0$ for $g\in\DC_a$.

	The Hardy space $H^2$ is defined as
	\[
	H^2=\{f\in L^2(\TB,\diff\lambda); \hat f(k)=0 \textnormal{ whenever } k<0\}.
	\]
	The Poisson extension operator restricted to $H^2$ is a unitary operator into the space of functions analytic on $\DB$ having square summable Taylor coefficients. The inverse of this operator is given by the identity $f(\zeta)=\lim_{r\to 1^{-}}P_f(r\zeta)$, valid for $\lambda$-a.e. $\zeta\in\TB$. We will typically not distinguish $f\in H^2$ from $P_f$.
	
	A positive measure $\nu$ on $\DB$ is called a Carleson measure if 
	\[
	\|\nu\|_{CM}=\sup_{w\in\DB}\int_ \DB \frac{1-|w|^2}{|1-\conj{w} z|^2}\diff \nu(z)<\infty.
	\]
	In particular, any such measure is finite. The Carleson embedding theorem states that the above condition is equivalent to that the Hardy space $H^2$ is continuously embedded into the space $L^2 (\DB,\diff \nu)$. Specifically, if $C>0$ denotes the smallest number such that
	\[
	\int_{\DB}|f(z)|^2\diff \nu(z)\le C^2\|f\|_{H^2},\quad f\in H^2,
	\]
	then $C^2\approx \|\nu\|_{CM}$, e.g. \cite[Chapter I, Theorem 5.6]{Garnett2007:BoundedAnalyticFunctions}. 
	
	The next proposition is a slight extension of results from \cite{Agler-Stankus1995:m-IsometricTransformationsOfHilbertSpaceI,Richter1991:ARepresentationTheoremForCyclicAnalyticTwo-Isometries}. We have essentially added an induction step. For the readers convenience we provide a proof:
	\begin{proposition}\label{prop:PropertiesOfBetak}
		Let $T\in\LC$.
		\begin{enumerate}[(i)]
			\item For $n,j\in\ZB_{\ge 0}$, it holds that
			\begin{equation}\label{eq:RecursionForBetaK}
			\beta_{n+j}(T)=\sum_{i=0}^j(-1)^{j-i}\binom{j}{i}T^{*i}\beta_n(T)T^{i}.
			\end{equation}
			In particular, if $T$ is $m$-isometric for $m\in\ZB_{\ge 1}$, then $T$ is $(m+j)$-isometric for all $j\in\ZB_{\ge 1}$. Moreover, the case $j=1$ implies that $T$ is an isometry with respect to the sesquilinear form $\langle \beta_{m-1}(T)\cdot,\cdot\rangle_\HC$.
			\item For $k,n\in\ZB_{\ge 0}$, it holds that
			\begin{equation}\label{eq:SymbolEquation}
			T^{*k}\beta_n(T)T^k=\sum_{j=0}^k \binom{k}{j} \beta_{n+j}(T).
			\end{equation}
			In particular, if $T$ is $m$-isometric for $m\in\ZB_{\ge 1}$, then $T^{*k}\beta_n(T)T^k$ is a polynomial in $k$, with operator coefficients, and degree at most $m-1-n$.
			\item If $T$ is $m$-isometric, then the operator $\beta_{m-1}(T)$ is positive on $\HC$. Moreover, if $x\in\HC$, then the distribution $\mu_x\in\DC'$ given by $\hat{\mu}_x(k)=\conj{\hat \mu _x(-k)}=\langle \beta_{m-1}(T)x,T^kx\rangle_{\HC}$ for $k\in\ZB_{\ge 0}$ is a positive measure such that
			\begin{equation}\label{eq:MYHANDSARETYPINGWORDS}
			\langle \beta_{m-1}(T)f(T)x,f(T)x\rangle = \int_{\TB} |f|^2 \diff\mu_x
			\end{equation}
			whenever $f\in\DC_a$.
		\end{enumerate}
	\end{proposition}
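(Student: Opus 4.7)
For part (i), the plan is to first verify the $j=1$ case $\beta_{n+1}(T)=T^{*}\beta_n(T)T-\beta_n(T)$ by splitting $\binom{n+1}{k}=\binom{n}{k}+\binom{n}{k-1}$ in the definition \eqref{eq:DefinitionBetaK}: the first summation yields $-\beta_n(T)$ and the second, after shifting the index by one, yields $T^{*}\beta_n(T)T$. The general recursion \eqref{eq:RecursionForBetaK} then follows by induction on $j$. Alternatively, one may expand the right-hand side of \eqref{eq:RecursionForBetaK} directly and collect the coefficient of $T^{*\ell}T^\ell$, which by Vandermonde's convolution equals $(-1)^{n+j-\ell}\sum_{i}\binom{j}{i}\binom{n}{\ell-i}=(-1)^{n+j-\ell}\binom{n+j}{\ell}$. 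The stated consequences are immediate: if $\beta_m(T)=0$, then \eqref{eq:RecursionForBetaK} with $n=m$ gives $\beta_{m+j}(T)=0$ for all $j\ge 1$, while the $j=1$ recursion at $n=m-1$ rearranges to $T^{*}\beta_{m-1}(T)T=\beta_{m-1}(T)$.

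For part (ii), I proceed by induction on $k$. The case $k=0$ is trivial. For the step I write $T^{*(k+1)}\beta_n(T)T^{k+1}=T^{*}\bigl(T^{*k}\beta_n(T)T^k\bigr)T$, and combine the inductive hypothesis with the $j=1$ form of (i) to obtain $\sum_{j=0}^{k}\binom{k}{j}\bigl[\beta_{n+j+1}(T)+\beta_{n+j}(T)\bigr]$; reindexing the first summand and applying Pascal's identity \eqref{eq:BinomialIdentity} yields $\sum_{j=0}^{k+1}\binom{k+1}{j}\beta_{n+j}(T)$. For the polynomial statement, note that under the $m$-isometric hypothesis (i) forces $\beta_{n+j}(T)=0$ whenever $n+j\ge m$, so the sum truncates to $j\le m-1-n$; since each $\binom{k}{j}$ is a polynomial in $k$ of degree $j$, the total degree is at most $m-1-n$.

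For part (iii) I begin by establishing $\beta_{m-1}(T)\ge 0$. Specialising \eqref{eq:SymbolEquation} at $n=0$ (where $\beta_0(T)=I$) writes $\|T^kx\|_\HC^2$ as a polynomial in $k$ of degree at most $m-1$ with leading coefficient $\langle\beta_{m-1}(T)x,x\rangle_\HC/(m-1)!$; nonnegativity of the left-hand side for every $k\in\ZB_{\ge 0}$ and the limit $k\to\infty$ force the leading coefficient to be nonnegative. I next observe that \eqref{eq:SymbolEquation} at $n=m-1$ collapses to $T^{*k}\beta_{m-1}(T)T^k=\beta_{m-1}(T)$ for every $k\ge 0$, since only the $j=0$ term survives. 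Cauchy--Schwarz in the positive semidefinite form $\langle\beta_{m-1}(T)\cdot,\cdot\rangle_\HC$ then yields $|\hat\mu_x(k)|\le\langle\beta_{m-1}(T)x,x\rangle_\HC$, so $\mu_x\in\DC'$. Splitting into the cases $k\ge j$ and $k<j$, and using the invariance $T^{*(k\wedge j)}\beta_{m-1}(T)T^{k\wedge j}=\beta_{m-1}(T)$ together with the self-adjointness of $\beta_{m-1}(T)$, I obtain $\langle\beta_{m-1}(T)T^jx,T^kx\rangle_\HC=\hat\mu_x(k-j)$ for all $j,k\in\ZB_{\ge 0}$. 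For any $f(z)=\sum c_jz^j\in\DC_a$, expanding both sides of \eqref{eq:MYHANDSARETYPINGWORDS} as double sums then gives $\langle\beta_{m-1}(T)f(T)x,f(T)x\rangle_\HC=\mu_x(|f|^2)$; the left-hand side is nonnegative, so $\mu_x$ is a positive distribution, and by the Fejér--Riesz reduction together with the result of Hörmander recalled in Section \ref{sec:Preliminaries}, $\mu_x$ is in fact a positive finite measure.

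The main technical point is in part (iii), where one must align three separate consequences of the basic algebra --- positivity of $\beta_{m-1}(T)$, the invariance $T^{*k}\beta_{m-1}(T)T^k=\beta_{m-1}(T)$, and the compatibility of the pairing with the Fourier coefficients of $\mu_x$ --- before invoking the distributional characterisation of positive measures. By contrast, parts (i) and (ii) reduce to routine manipulations with binomial coefficients via \eqref{eq:BinomialIdentity}.
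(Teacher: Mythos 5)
Your proof is correct, and for parts (i) and (iii) it follows essentially the same route as the paper: the $j=1$ case of \eqref{eq:RecursionForBetaK} via Pascal's identity \eqref{eq:BinomialIdentity} followed by induction on $j$; and, in (iii), positivity of $\beta_{m-1}(T)$ from the sign of the top coefficient of the polynomial $k\mapsto\|T^kx\|_\HC^2$, the identity $\langle\beta_{m-1}(T)T^jx,T^kx\rangle_\HC=\hat\mu_x(k-j)$, the double-sum expansion of $\langle\beta_{m-1}(T)f(T)x,f(T)x\rangle_\HC$, and the Fej\'er--Riesz reduction plus the fact that positive distributions on $\TB$ are finite positive measures. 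The one genuine divergence is part (ii): the paper substitutes \eqref{eq:RecursionForBetaK} into the right-hand side of \eqref{eq:SymbolEquation}, swaps the order of summation, and collapses the coefficient of $T^{*i}\beta_n(T)T^i$ via the alternating identity $\sum_{j=i}^k(-1)^{j-i}\binom{k}{j}\binom{j}{i}=\delta_{i,k}$, whereas you induct on $k$ using only the first-order relation $T^{*}\beta_{n+j}(T)T=\beta_{n+j+1}(T)+\beta_{n+j}(T)$ together with \eqref{eq:BinomialIdentity}. Both arguments are valid; yours is lighter on binomial gymnastics, while the paper's direct computation makes explicit that \eqref{eq:SymbolEquation} is exactly the inversion of the finite-difference formula \eqref{eq:RecursionForBetaK}. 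Two minor points in your favour: your Cauchy--Schwarz bound $|\hat\mu_x(k)|\le\langle\beta_{m-1}(T)x,x\rangle_\HC$, which justifies $\mu_x\in\DC'$ and the absolute convergence of the double sum, is a check the paper leaves implicit; and deriving the invariance $T^{*k}\beta_{m-1}(T)T^k=\beta_{m-1}(T)$ from \eqref{eq:SymbolEquation} at $n=m-1$ rather than from the $j=1$ case of (i) is an equivalent, purely cosmetic, variation.
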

	\begin{proof}
		$(i)$ We consider fixed $n$, and use induction over $j$. The case $j=0$ is trivial. For $j=1$, use \eqref{eq:DefinitionBetaK} together with \eqref{eq:BinomialIdentity}:  
		\begin{multline*}
		\beta_{n+1}(T)
		=
		\sum_{i=0}^{n+1}(-1)^{n+1-i}\binom{n+1}{i}T^{*i}T^{i}
		\\
		=
		\sum_{i=0}^{n+1}(-1)^{n+1-i}\binom{n}{i-1}T^{*i}T^{i}
		+
		\sum_{i=0}^{n+1}(-1)^{n+1-i}\binom{n}{i}T^{*i}T^{i}.
		\end{multline*}
		Note that the first term in the first sum vanishes. Summing over $i-1$, rather than over $i$, the first sum equals $T^*\beta_{n}(T)T$. Similarly, the second sum equals $-\beta_{n}(T)$.  Hence, $(i)$ holds for $j=1$. 
		
		Assume now that our conclusion holds for some $j=j_0$. By the case $j=1$ we obtain that
		\begin{multline*}
		\beta_{n+j_0+1}(T)
		=
		T^*\beta_{n+j_0}(T)T-\beta_{n+j_0}(T)
		\\
		=
		\sum_{i=0}^{j_0}(-1)^{j_0-i}\binom{j_0}{i}T^{*i+1}\beta_{n}(T)T^{i+1}
		-
		\sum_{i=0}^{j_0}(-1)^{j_0-i}\binom{j_0}{i}T^{*i}\beta_{n}(T)T^{i}.			
		\end{multline*}
		Summing over $i+1$ in the first sum, and using \eqref{eq:BinomialIdentity}, we conclude that $(i)$ holds for $j=j_0+1$. This completes the induction argument.
		
		$(ii)$ Using $(i)$ , and changing the order to summation, we compute the right-hand side of \eqref{eq:SymbolEquation}:
		\begin{multline*}
		\sum_{j=0}^k \binom{k}{j}\beta_{n+j}(T)
		\\
		=
		\sum_{j=0}^k \binom{k}{j}\sum_{i=0}^j(-1)^{j-i}\binom{j}{i}T^{*i}\beta_n(T)T^{i}
		=
		\sum_{i=0}^kT^{*i}\beta_n(T)T^{i}\sum_{j=i}^k (-1)^{j-i}\binom{k}{j}\binom{j}{i}.
		\end{multline*}	
		Using the identity $\binom{k}{j}\binom{j}{i}=\binom{k}{i}\binom{k-i}{j-i}$, and summing over $j-i=j'$, the coefficient of $T^{*i}\beta_n(T)T^i$ in the above right-hand side becomes
		\[
		\sum_{j=i}^k(-1)^{j-i}\binom{k}{j}\binom{j}{i}
		=
		\binom{k}{i}\sum_{j=0}^{k-i}(-1)^{j}\binom{k-i}{j}
		=
		\left\{
		\begin{array}{ll}
		0 & \textnormal{for }i<k,\\
		1 & \textnormal{for }i=k,
		\end{array}
		\right.
		\]
		as follows by the binomial theorem. This proves $(ii)$.
		
		$(iii)$ Let $x\in\HC$. Applying $(ii)$ with $n=0$, $\|T^kx\|_\HC^2$ is a polynomial with leading coefficient $\frac{1}{(m-1)!}\langle\beta_{m-1}(T)x,x\rangle_\HC$. Clearly this must be positive.
		
		It follows from $(i)$ that $\langle\beta_{m-1}(T)T^kx,T^lx\rangle_\HC
		=
		\hat \mu_x(l-k)$, and so
		\begin{multline*}
		\langle \beta_{m-1}(T)f(T)x,f(T)x\rangle_\HC
		\\
		=
		\sum_{k,l=0}^\infty \hat f(k)\conj{\hat f(l)}\langle \beta_{m-1}(T)T^kx,T^lx\rangle_\HC
		=
		\sum_{k,l=0}^\infty \hat f(k)\conj{\hat f(l)}\hat \mu_x(l-k)
		=
		\mu_x(|f|^2).
		\end{multline*}
		By the Fejér--Riesz theorem, any positive function in $\DC$ may be approximated by functions of the form $|f|^2$ where $f\in\DC_a$. The fact that $\beta_{m-1}(T)\ge 0$ now implies that $\mu_x$ is a positive distribution, and \eqref{eq:MYHANDSARETYPINGWORDS} follows from the Riesz representation theorem.
	\end{proof}
	
	A consequence of statement $(ii)$ in the above proposition is that if $T$ is an  $m$-isometry, then $\sigma(T)\subset \clos{\DB}$, by Gelfand's formula for the spectral radius. Hence the map $\DC_a\ni f\mapsto \sum_{k=0}^\infty \hat f(k)T^k\in \LC$ is a continuous unital algebra homomorphism, cf. \cite[Proposition 1.20]{Agler-Stankus1995:m-IsometricTransformationsOfHilbertSpaceI}. Since analytic polynomials are dense in $\DC_a$, this implies that $e$ is cyclic for $T$ if and only if $\bigvee\{f(T)e;f\in\DC_a\}=\HC$. 
	
	Let $\sigma_{ap}(T)=\{z\in\CB;z-T\textnormal{ is not bounded from below}\}$, the approximate point spectrum of $T$. A slightly more careful analysis shows that if $T$ is an $m$-isometry, then $\sigma_{ap}(T)\subset \TB$, cf. \cite[Lemma 1.21]{Agler-Stankus1995:m-IsometricTransformationsOfHilbertSpaceI}. In particular, $T$ is bounded from below.
	
	\section{The model theorem}\label{sec:ModelTheorem}
	
	\begin{theorem}\label{thm:ModelTheorem}
		If $\MU\in\left(\DC'\right)^m$ is a normalized allowable $m$-tuple, then the operator $M_{\MU}$ is a bounded $m$-isometry, with $1$ as a cyclic unit vector.	Conversely, if $T\in\LC$ is an $m$-isometry with a cyclic unit vector $e$, then $\MU\in\left(\DC'\right)^m$ given by
		\[
		\hat \mu_n(k)=\conj{\hat \mu_n(-k)}=\langle \beta_{n}(T)e,T^ke\rangle_\HC,\quad k\in\ZB_{\ge 0},
		\]
		is a normalized allowable $m$-tuple, and there exists a unitary map $U:\HC\to \DC_{\MU}^2$ such that $T=U^*M_{\MU}U$, and $Ue=1$. 
		
		If $T_j:\HC_j\to\HC_j$, $j\in\{1,2\}$, are $m$-isometries with cyclic vectors $e_j$, then the associated $m$-tuples $\MU_{j}$ coincide if and only if there exists a unitary map $U:\HC_1\to\HC_2$ such that $T_1=U^*T_2U$ and $Ue_1=e_2$.
	\end{theorem}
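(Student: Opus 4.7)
My plan is built around one master identity: for every $m$-isometry $T\in\LC$ with cyclic unit vector $e$, and every $f\in\DC_a$,
\begin{equation*}
\|f(T)e\|_{\HC}^{2}=\sum_{n=0}^{m-1}\DC_{\mu_{n},n}(f),
\end{equation*}
where the distributions $\mu_n$ are defined as in the theorem. Once this is in place, all three parts of the theorem essentially fall out by bookkeeping. To establish it, I would expand both sides as double series in the Taylor coefficients of $f$ and match them termwise. Lemma \ref{lemma:DirichletIntegralAsADoubleSum}, which I take to yield
\begin{equation*}
\DC_{\mu,n}(f)=\sum_{l,l'\ge 0}\binom{l\wedge l'}{n}\hat f(l)\overline{\hat f(l')}\hat\mu(l'-l),
\end{equation*}
reformulates the left-hand side. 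For the right, Proposition \ref{prop:PropertiesOfBetak}(ii) applied with $n=0$ gives $T^{*l}T^{l}=\sum_{n=0}^{l}\binom{l}{n}\beta_{n}(T)$, so that for $l\le l'$ one computes $\langle T^{l}e,T^{l'}e\rangle=\langle T^{*l}T^{l}e,T^{l'-l}e\rangle=\sum_{n=0}^{l\wedge l'}\binom{l\wedge l'}{n}\hat\mu_{n}(l'-l)$, with the case $l>l'$ following by conjugate symmetry of the $\hat\mu_n$. Since $T$ is $m$-isometric, Proposition \ref{prop:PropertiesOfBetak}(i) gives $\beta_{n}(T)=0$ for $n\ge m$, so both expansions truncate at $n=m-1$ and agree.

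For the forward direction (from $\MU$ to $M_{\MU}$), boundedness of $M_{\MU}$ is precisely the defining condition of allowability, cyclicity of $1$ follows from density of analytic polynomials in $\DC_a/\KC_{\MU}$, and $\|1\|_{\MU}^{2}=\hat\mu_0(0)=1$ by normalization. The $m$-isometry identity $\beta_m(M_{\MU})=0$ is equivalent to $\sum_{j=0}^{m}(-1)^{m-j}\binom{m}{j}\|M_{\MU}^{j}f\|_{\MU}^{2}=0$ for $f\in\DC_a$. Applying the same double-sum expansion to $M_z^j f=z^j f$, this reduces to the vanishing of $\sum_{j=0}^{m}(-1)^{m-j}\binom{m}{j}\binom{(l\wedge l')+j}{n}$ for each fixed $l,l'$ and each $n\in\{0,\ldots,m-1\}$. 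Viewed as a function of $j$, the inner binomial is a polynomial of degree $n\le m-1$, so its $m$-th forward difference at $0$ vanishes, as required.

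For the converse (from $T$ to $\MU$), I first check that the prescribed Fourier coefficients define genuine distributions: Proposition \ref{prop:PropertiesOfBetak}(ii) with $n=0$ shows that $\|T^ke\|_{\HC}^{2}$ is a polynomial in $k$ of degree at most $m-1$, so $|\hat\mu_n(k)|\le\|\beta_n(T)\|_{\LC}\|e\|_{\HC}\|T^k e\|_{\HC}\lesssim(1+|k|)^{(m-1)/2}$, giving polynomial growth. The master identity applied to $M_z f$ yields $\|M_z f\|_{\MU}=\|Tf(T)e\|_{\HC}\le\|T\|_{\LC}\|f\|_{\MU}$, so $\MU$ is allowable, and $\hat\mu_0(0)=\|e\|_{\HC}^2=1$ confirms normalization. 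I then define $U:\HC\to\DC_{\MU}^{2}$ on the dense subspace $\{f(T)e:f\in\DC_a\}$ by $Uf(T)e=[f]$; the master identity makes this well-defined and isometric, its range contains $\DC_a/\KC_{\MU}$ and is hence dense, and the intertwining $M_{\MU}U=UT$ together with $Ue=1$ follow directly from the definition. For uniqueness, if the associated tuples $\MU_{1}$ and $\MU_{2}$ coincide, composing the corresponding unitaries into the common model produces the desired unitary $\HC_1\to\HC_2$; conversely, a unitary $U$ with $UT_1=T_2U$ and $Ue_1=e_2$ transports $\langle\beta_n(T_1)e_1,T_1^ke_1\rangle$ onto $\langle\beta_n(T_2)e_2,T_2^ke_2\rangle$ term by term.

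The main obstacle is the master identity itself. Modulo Lemma \ref{lemma:DirichletIntegralAsADoubleSum} and Proposition \ref{prop:PropertiesOfBetak}, the matching for analytic polynomials is a clean finite-sum calculation; the remaining care lies in justifying the extension to general $f\in\DC_a$, which should come from rapid decay of the Fourier coefficients of $f$ against the polynomial growth bounds on $\hat\mu_n$, together with continuity of each $\DC_{\mu_n,n}$ on $\DC_a$.
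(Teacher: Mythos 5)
Your proposal is correct and follows essentially the same route as the paper: your master identity is exactly Lemma \ref{lemma:SmoothFunctionalCalculusIsometry}, proved in the same way from Lemma \ref{lemma:DirichletIntegralAsADoubleSum} and Proposition \ref{prop:PropertiesOfBetak}(ii) with $n=0$, and your growth estimate on $\hat\mu_n$, construction of $U$, and uniqueness argument coincide with the paper's. Your finite-difference verification that $M_{\MU}$ is $m$-isometric is the paper's Proposition \ref{prop:MzPlaysNicelyWithDirichletIntegrals} applied iteratively, so that difference is purely cosmetic.
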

	
	For proving the first part of this theorem, we derive a formula for weighted Dirichlet integrals:
	
	\begin{lemma}\label{lemma:DirichletIntegralAsADoubleSum}
		Let $\mu\in\DC'$, $f\in\DC_a$, and $n\in\ZB_{\ge 0}$. Then
		\begin{equation}\label{eq:DirichletIntegralAsADoubleSum}
		\DC_{\mu,n}(f)=\sum_{k,l=0}^\infty\binom{k\wedge l}{n}\hat f(k)\conj{\hat f(l)}\hat \mu(l-k).
		\end{equation}
		Moreover, the right-hand side is absolutely convergent.
	\end{lemma}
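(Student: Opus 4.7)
The plan is to expand each factor in the integrand as a series, integrate term-by-term in polar coordinates, and recognize the result via an elementary combinatorial identity. For $f\in\DC_a$ one has $f^{(n)}(z)=\sum_{k\ge n}(k)_n\hat f(k)z^{k-n}$, while $P_\mu(re^{i\theta})=\sum_{j\in\ZB}\hat\mu(j)r^{|j|}e^{ij\theta}$. Inserting both expansions into
\[
\DC_{\mu,n}^{(r)}(f):=\frac{1}{n!(n-1)!\pi}\int_{r\DB}|f^{(n)}(z)|^2 P_\mu(z)(1-|z|^2)^{n-1}\diff A(z),
\]
the resulting series converge absolutely and uniformly on $\clos{r\DB}$, since the Taylor coefficients of $f$ decay rapidly and $|\hat\mu(j)|$ grows only polynomially, so Fubini applies. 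Integration in $\theta$ collapses the triple sum to the diagonal $j=l-k$ and contributes a factor $2\pi$, and the identity $k+l+|l-k|=2(k\vee l)$ combined with the substitution $t=s^2$ reduces the radial integral to
\[
\frac{1}{2}\int_0^{r^2}t^{(k\vee l)-n}(1-t)^{n-1}\diff t.
\]

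As $r\to 1^-$ this incomplete Beta integral increases monotonically to $\tfrac{1}{2}B((k\vee l)-n+1,n)=\tfrac{1}{2}\frac{((k\vee l)-n)!\,(n-1)!}{(k\vee l)!}$. Assembling the prefactor $(n!(n-1)!\pi)^{-1}$, the factor $2\pi$, the Taylor weight $(k)_n(l)_n=(n!)^2\binom{k}{n}\binom{l}{n}$, and the Beta value, I expect the coefficient of $\hat f(k)\conj{\hat f(l)}\hat\mu(l-k)$ to collapse to
\[
\frac{\binom{k}{n}\binom{l}{n}}{\binom{k\vee l}{n}}=\binom{k\wedge l}{n},
\]
the last equality being immediate by splitting into the cases $k\le l$ and $k>l$. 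This yields \eqref{eq:DirichletIntegralAsADoubleSum}. The case $n=0$ follows at once from $\DC_{\mu,0}(f)=\mu(|f|^2)$ and $\mu(\zeta^{k-l})=\hat\mu(l-k)$, together with $\binom{k\wedge l}{0}=1$.

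For absolute convergence of the right-hand side, the rapid decay of $\hat f$ dominates both the polynomial growth $|\hat\mu(l-k)|\lesssim(1+|l-k|)^N$ (where $N$ is the Fourier order of $\mu$) and $\binom{k\wedge l}{n}\lesssim (1+k\wedge l)^n$, yielding a summable majorant of the form $(1+k)^{-M}(1+l)^{-M}$. The same majorant serves as an integrable bound for each $r<1$, so dominated convergence in the indices $(k,l)$ legitimizes passing $r\to 1^-$ inside the double sum, completing the proof. The main obstacle is essentially bookkeeping: ensuring that the Taylor coefficients, factors of $\pi$, factorials from the Beta function, and the combinatorial identity combine to give exactly $\binom{k\wedge l}{n}$, with the \emph{minimum} rather than the maximum in the upper index.
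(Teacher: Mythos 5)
Your proof is correct and takes essentially the same route as the paper's: expand $|f^{(n)}|^2$ and $P_\mu$, apply Fubini on $r\DB$, let the angular integration collapse to the diagonal $j=l-k$, identify the radial integral as an incomplete Beta function that increases to $\frac{((k\vee l)-n)!(n-1)!}{(k\vee l)!}$, pass to the limit by dominated convergence using the absolutely convergent majorant, and finish with the same combinatorial collapse to $\binom{k\wedge l}{n}$. One small caveat: for $n=0$ you invoke $\DC_{\mu,0}(f)=\mu(|f|^2)$, which in the paper is itself deduced from this lemma, but the direct expand-and-integrate computation on $r\TB$ is immediate, so nothing substantive is missing.
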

	\begin{remark}
		Recall that, by convention, if $k\wedge l<n$, then $\binom{k\wedge l}{n}=0$. Hence, the above right-hand side is equal to $\sum_{k,l=n}^\infty\binom{k\wedge l}{n}\hat f(k)\conj{\hat f(l)}\hat \mu(l-k)$.
	\end{remark}
	\begin{proof}
		We begin with the statement about absolute convergence. The binomial coefficient $\binom{k\wedge l}{n}$ is a $n$th degree polynomial in $k\wedge l$, and $|\hat{\mu}(l-k)|\lesssim \left(1+|l-k|\right)^N\lesssim (1+k\vee l)^N$, where $N$ is the order of $\mu$. Consequently,
		\begin{multline*}
		\sum_{k,l=0}^\infty\left|\binom{k\wedge l}{n}\hat f(k)\conj{\hat f(l)}\hat \mu(l-k)\right|
		\lesssim 
		\sum_{k,l=0}^\infty(1+k\wedge l)^{n}(1+k\vee l)^N|\hat f(k)\conj{\hat f(l)}|
		\\
		\le
		\left(\sum_{k=0}^\infty (1+k)^{n\vee N}|\hat f(k)|\right)\left(\sum_{l=0}^\infty (1+l)^{n\vee N}|\hat f(l)|\right).
		\end{multline*}
		The right-hand side is absolutely convergent because of the rapid decay of $\left(\hat f(k)\right)_{k\ge 0}$.
		
		We now prove the equality \eqref{eq:DirichletIntegralAsADoubleSum}. We consider only the case $n\ge 1$. The case $n=0$ is treated similarly. Note that 
		\[
		|f^{(n)}(z)|^2=\sum_{k,l=0}^\infty(k)_n(l)_n\hat f (k)\conj{\hat f(l)}z^{k-n}\conj{z}^{l-n}.
		\] 
		
		Let $r\in(0,1)$. Since the power series of $P_\mu$ is uniformly convergent on the disc $r\DB$, we may interchange summation and integration in order to obtain
		\begin{multline*}
		\int_{r\DB}|f^{(n)}(z)|^2P_{\mu}(z)(1-|z|^2)^{n-1}\diff A(z)
		\\
		=
		\sum_{k,l=n}^\infty(k)_n(l)_n\hat f(k)\conj{\hat f(l)}
		\sum_{j\in\ZB}\hat \mu(j)\int_{r\DB}z^{k-n}\conj{z}^{l-n}z^*(j)(1-|z|^2)^{n-1}\diff A(z),
		\end{multline*}
		where $z^*(j)=\conj{z^*(-j)}=z^j$ for $j\in\ZB_{\ge 0}$. Using polar coordinates, one obtains that the integral in the above right-hand side vanishes, unless $j=l-k$. For $j=l-k$ one obtains that
		\begin{multline*}
		\int_{r\DB}z^{k-n}\conj{z}^{l-n}z^*(l-k)(1-|z|^2)^{n-1}\diff A(z)
		\\
		=2\pi\int_{\rho=0}^r\rho^{2k\vee l-2n+1}(1-\rho^2)^{n-1}\diff \rho
		=\pi\int_{\rho=0}^{\sqrt{r}} \rho^{k\vee l-n}(1-\rho)^{n-1}\diff \rho,
		\end{multline*}
		by the change of variables $\rho^2=\rho'$. By the monotone convergence theorem,
		\begin{equation}\label{eq:ApproximateBFunction}
		\int_{\rho=0}^{\sqrt{r}}\rho^{k\vee l-n}(1-\rho)^{n-1}\diff \rho \nearrow
		\int_{\rho=0}^1\rho^{k\vee l-n}(1-\rho)^{n-1}\diff \rho\quad\textnormal{as }r\to 1^{-}.
		\end{equation}
		By well-known properties of the Euler $B$-function, the last integral equals 
		\[
		\frac{(k\vee l-n)!(n-1)!}{(k\vee l)!}=\frac{(n-1)!}{(k\vee l)_n}.
		\]
		Using the fact that $(k\vee l)_n=(k)_n\vee(l)_n$, we now have that
		\begin{align*}
		\DC_{\mu,n}(f)
		= {}&
		\lim_{r\to 1^{-}}\frac{1}{n!(n-1)!\pi }\int_{r\DB}|f^{(n)}(z)|^2P_{\mu}(z)(1-|z|^2)^{n-1}\diff A(z)
		\\
		= {}& 
		\lim_{r\to 1^{-}}\frac{1}{n!(n-1)! }\sum_{k,l=0}^\infty(k)_n(l)_n\hat f(k)\conj{\hat f(l)}
		\hat \mu(l-k)
		\int_{\rho=0}^{\sqrt{r}}\rho^{k\vee l-n}(1-\rho)^{n-1}\diff \rho
		\\
		= {}&
		\sum_{k,l=0}^\infty\frac{(k\wedge l)_n}{n!}\hat f(k)\conj{\hat f(l)}
		\hat \mu(l-k) 
		\\
		= {}&
		\sum_{k,l=0}^\infty\binom{k\wedge l}{n}\hat f(k)\conj{\hat f(l)}
		\hat \mu(l-k).
		\end{align*}
		We already proved that the resulting series is absolutely convergent, and since the limit \eqref{eq:ApproximateBFunction} is increasing, the second to last equality is justified by the dominated convergence theorem.
	\end{proof}
	
	Lemma \ref{lemma:DirichletIntegralAsADoubleSum} yields the following result:
	
	\begin{proposition}\label{prop:MzPlaysNicelyWithDirichletIntegrals}
		If $n\in\ZB_{\ge 0}$ and $\mu\in\DC'$, then
		\begin{equation}\label{eq:MzPlaysNicelyWithDirichletIntegrals1}
		\DC_{\mu,n}(zf)=\DC_{\mu,n}(f)+\DC_{\mu,n-1}(f).
		\end{equation}
	\end{proposition}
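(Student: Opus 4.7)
The plan is to derive this identity as a direct consequence of Lemma \ref{lemma:DirichletIntegralAsADoubleSum}, which expresses $\DC_{\mu,n}$ as an absolutely convergent double series whose coefficients are $\binom{k\wedge l}{n}\hat\mu(l-k)$. The identity \eqref{eq:MzPlaysNicelyWithDirichletIntegrals1} should then reduce to Pascal's rule \eqref{eq:BinomialIdentity} applied pointwise in $k,l$, so essentially no analysis is required beyond a bookkeeping argument.

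Concretely, I would first observe that the Taylor coefficients of $zf$ are given by $\widehat{zf}(k)=\hat f(k-1)$ for $k\ge 1$ and $\widehat{zf}(0)=0$. Plugging into the formula from Lemma \ref{lemma:DirichletIntegralAsADoubleSum} yields
\[
\DC_{\mu,n}(zf)=\sum_{k,l=1}^\infty\binom{k\wedge l}{n}\hat f(k-1)\conj{\hat f(l-1)}\hat\mu(l-k).
\]
After the index shift $k\mapsto k+1$, $l\mapsto l+1$, which preserves $l-k$, the series becomes
\[
\DC_{\mu,n}(zf)=\sum_{k,l=0}^\infty\binom{(k\wedge l)+1}{n}\hat f(k)\conj{\hat f(l)}\hat\mu(l-k),
\]
using that $(k+1)\wedge(l+1)=(k\wedge l)+1$.

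Next, I would apply the Pascal identity \eqref{eq:BinomialIdentity} in the form $\binom{(k\wedge l)+1}{n}=\binom{k\wedge l}{n}+\binom{k\wedge l}{n-1}$, which is valid for all $n\in\ZB$ under the standing conventions on binomial coefficients (so in particular the case $n=0$ is handled, consistent with $\DC_{\mu,-1}\equiv 0$). Splitting the sum into two and invoking Lemma \ref{lemma:DirichletIntegralAsADoubleSum} twice identifies the two pieces with $\DC_{\mu,n}(f)$ and $\DC_{\mu,n-1}(f)$ respectively, yielding \eqref{eq:MzPlaysNicelyWithDirichletIntegrals1}.

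The only subtlety is to justify splitting the series, but absolute convergence of each of the resulting series is already contained in Lemma \ref{lemma:DirichletIntegralAsADoubleSum} (applied to $\DC_{\mu,n}(f)$ and $\DC_{\mu,n-1}(f)$ individually), so rearrangement is legitimate. There is no serious obstacle; the step that requires the most care is the index shift combined with keeping track of the convention $\binom{j}{n}=0$ when $j<n$, but this is immediate once one writes the sums with the lower limits corresponding to the nonzero terms.
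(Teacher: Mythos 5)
Your proof is correct and follows essentially the same route as the paper: apply Lemma \ref{lemma:DirichletIntegralAsADoubleSum} to $zf$, shift the summation indices, use the Pascal identity \eqref{eq:BinomialIdentity} on $\binom{k\wedge l+1}{n}$, and split the absolutely convergent series back into the two Dirichlet integrals. Your extra remarks on the $n=0$ case and on justifying the splitting via absolute convergence are consistent with (and slightly more explicit than) the paper's argument.
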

	\begin{proof}
		Note that $zf(z)=\sum_{k=1}^\infty \hat f(k-1)z^k$. Using Lemma \ref{lemma:DirichletIntegralAsADoubleSum}, followed by a shift of summation indices and the binomial identity \eqref{eq:BinomialIdentity}, we compute
		\begin{multline*}
		\DC_{\mu,n}(zf)
		=
		\sum_{k,l=n-1}^\infty\binom{k\wedge l+1}{n}\hat f(k)\conj{\hat f(l)}\hat \mu(l-k)
		\\
		=
		\sum_{k,l=n-1}^\infty\left[\binom{k\wedge l}{n}+\binom{k\wedge l}{n-1}\right]\hat f(k)\conj{\hat f(l)}\hat \mu(l-k)
		=
		\DC_{\mu,n}(f)+\DC_{\mu,n-1}(f).
		\end{multline*}
	\end{proof}
	
	We are now prepared to prove the first part of Theorem \ref{thm:ModelTheorem}: Assume that $\MU$ is a normalized allowable $m$-tuple. By \eqref{eq:AllowableSequence}, we have that $\|M_zf\|_{\MU}=0$ whenever $\|f\|_{\MU}=0$, and so $M_{\MU}:\DC_a/\KC_{\MU}\to\DC_a/\KC_{\MU}$ is a well-defined bounded operator. 
	
	By an application of the dominated convergence theorem, Lemma \ref{lemma:DirichletIntegralAsADoubleSum} implies that if $\mu\in\DC'$, and if $f\in\DC_a$ has Fourier partial sums $(s_Nf)_{N\in\ZB_{\ge 0}}$, then
	\begin{equation}\label{eq:ApproximationByFourierPartialSums}
	\DC_{\mu,n}(f)=\lim_{N\to\infty}\DC_{\mu,n}(s_Nf),\quad \textnormal{hence}\quad \|f\|_{\MU}=\lim_{N\to\infty}\|s_Nf\|_{\MU},
	\end{equation}
	so that when we take the completion of $\DC_a/\KC_{\MU}$ with respect to $\|\cdot\|_{\MU}$, it suffices to consider analytic polynomials. Since any analytic polynomial is in the span of $\{M_z^k1;k\in\ZB_{\ge 0}\}$, $1$ is a cyclic vector for $M_{\MU}$. Moreover, $\|1\|_{\MU}^2=\hat \mu_{0}(0)=1$, since $\MU$ is normalized.
	
	We prove that $M_{\MU}$ is $m$-isometric by using Proposition \ref{prop:MzPlaysNicelyWithDirichletIntegrals} iteratively. One iteration yields
	\begin{equation*}
	\DC_{\mu,n}(z^2f)-2\DC_{\mu,n}(zf)+\DC_{\mu,n}(zf)
	=
	\DC_{\mu,n-1}(zf)-\DC_{\mu,n-1}(f)=\DC_{\mu,n-2}(f).
	\end{equation*}
	An induction argument shows that in general
	\[
	\sum_{j=0}^{m}(-1)^{m-j}\binom{m}{j}\DC_{\mu,n}(z^{m-j}f)=\DC_{\mu,n-m}(f).
	\]
	Applying this identity to each term in \eqref{eq:DefWeightedDirichletNorm} yields that
	\[
	\sum_{j=0}^{m}(-1)^{m-j}\binom{m}{j}\|M_z^{m-j}f\|_{\MU}^2=0,
	\]
	i.e. $M_z$ is $m$-isometric on $\DC_a$ with respect to $\|\cdot\|_{\MU}^2$. Hence $M_{\MU}$ is an $m$-isometry.
	
	To prove the second part of Theorem \ref{thm:ModelTheorem}, let $T\in\LC$ be an $m$-isometry with a cyclic unit vector $e$, and recall that the corresponding tuple $\MU$ is defined by 
	\begin{equation}\label{eq:DefinitionOfDistributions}
	\hat \mu_n(k)=\conj{\hat \mu_n(-k)}=
	\langle \beta_n(T)e,T^ke\rangle_\HC,\quad k\in\ZB_{\ge 0}.
	\end{equation}
	It is clear that $\MU$ is normalized. 
	
	For $f\in\DC_a$, we have that $\|f\|_{T,e}^2=\sum_{n=0}^{m-1}\DC_{\mu_n,n}(f)$. Our main technical lemma is the following:	
	\begin{lemma}\label{lemma:SmoothFunctionalCalculusIsometry}
		For any $f\in\DC_a$, it holds that
		\begin{equation}\label{eq:SmoothFunctionalCalculusIsometry}
		\|f\|_{T,e}^2= \|f(T)e\|_\HC^2 .
		\end{equation}
	\end{lemma}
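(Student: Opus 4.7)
The plan is to reduce the identity to a bilinear statement in the Taylor coefficients of $f$, and then prove a single key arithmetic identity linking $\langle T^k e,T^l e\rangle_\HC$ to the Fourier data $\hat\mu_n(l-k)$.

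First, I would reduce to the case where $f$ is an analytic polynomial. On the right-hand side, the functional calculus $\DC_a\ni g\mapsto g(T)\in\LC$ is continuous (since $\sigma(T)\subset\clos{\DB}$, as noted in Section \ref{sec:Preliminaries}), so $s_Nf(T)e\to f(T)e$ in $\HC$. On the left-hand side, \eqref{eq:ApproximationByFourierPartialSums} gives $\|s_Nf\|_{T,e}\to \|f\|_{T,e}$. Hence it suffices to verify \eqref{eq:SmoothFunctionalCalculusIsometry} for analytic polynomials, for which both sides are finite bilinear forms in $\hat f(k),\conj{\hat f(l)}$. Combining Lemma \ref{lemma:DirichletIntegralAsADoubleSum} with the definition \eqref{eq:DefWeightedDirichletNorm} of $\|\cdot\|_{\MU}$, the identity reduces to showing
\begin{equation*}
\langle T^k e,T^l e\rangle_\HC=\sum_{n=0}^{m-1}\binom{k\wedge l}{n}\hat\mu_n(l-k),\qquad k,l\in\ZB_{\ge 0}.
\end{equation*}

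Next I would prove this arithmetic identity. By the conjugate symmetry $\hat\mu_n(l-k)=\conj{\hat\mu_n(k-l)}$ and Hermitian symmetry of the inner product, both sides are exchanged into their conjugates under $(k,l)\leftrightarrow(l,k)$, so it suffices to treat $k\le l$. For such $k$, I would write $\langle T^ke,T^le\rangle_\HC=\langle T^{*k}T^k e,T^{l-k}e\rangle_\HC$ and apply \eqref{eq:SymbolEquation} with $n=0$ to get $T^{*k}T^k=\sum_{j=0}^k\binom{k}{j}\beta_j(T)$. Since $T$ is an $m$-isometry, Proposition \ref{prop:PropertiesOfBetak}(i) yields $\beta_j(T)=0$ for all $j\ge m$, so the sum truncates at $j\le m-1$ (the extra zeros $\binom{k}{j}=0$ for $j>k$ let us freely extend the summation range to $0\le j\le m-1$). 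Using the definition \eqref{eq:DefinitionOfDistributions} of $\hat\mu_n$, each inner product $\langle\beta_j(T)e,T^{l-k}e\rangle_\HC$ equals $\hat\mu_j(l-k)$, and since $k=k\wedge l$ under our assumption, the desired identity drops out.

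Finally, plugging this back into the double-sum expression for $\|f\|_{T,e}^2$ gives
\begin{equation*}
\|f\|_{T,e}^2=\sum_{k,l}\hat f(k)\conj{\hat f(l)}\langle T^ke,T^le\rangle_\HC=\|f(T)e\|_\HC^2,
\end{equation*}
as required. The only substantive obstacle is ensuring the two expansions are bookkept consistently: on the Dirichlet-integral side the natural index is $k\wedge l$ (through the binomial $\binom{k\wedge l}{n}$), whereas the calculation of $\langle T^ke,T^le\rangle_\HC$ naturally uses the smaller of $k,l$ as the exponent to be pushed across $T^*$. Once the $k\le l$ case is pinned down, conjugate symmetry handles the rest, and the truncation of $\beta_j(T)$ at $j=m-1$ is what makes the two sides match term by term.
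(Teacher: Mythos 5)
Your proposal is correct and follows essentially the same route as the paper: reduction to monomials via Fourier partial sums and continuity of the functional calculus, then the identity $\langle T^ke,T^le\rangle_\HC=\sum_{n=0}^{m-1}\binom{k\wedge l}{n}\hat\mu_n(l-k)$ for $k\le l$ via \eqref{eq:SymbolEquation} with $n=0$ together with the vanishing of $\beta_j(T)$ for $j\ge m$. The paper merely runs the same computation in the opposite direction, starting from $\langle z^k,z^l\rangle_{T,e}$ and ending at $\langle T^ke,T^le\rangle_\HC$.
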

	\begin{proof}
		The statement is equivalent to the claim that
		\begin{equation}\label{eq:Claim}
		f,g\in\DC_a\Rightarrow \langle f,g\rangle _{T,e} = \langle f(T)e,g(T)e\rangle_\HC .
		\end{equation}
		The Fourier partial sums $(s_Nf)_{N=0}^\infty$ converge to $f$ in $\DC_a$ as $N\to \infty$. By continuity of the functional calculus, it follows that $\langle s_Nf(T)e,s_Ng(T)e\rangle_\HC\to \langle f(T)e,g(T)e\rangle_\HC$. By \eqref{eq:ApproximationByFourierPartialSums}, $\langle s_Nf,s_Ng\rangle_{T,e}\to \langle f,g\rangle_{T,e}$. Hence it suffices to verify \eqref{eq:Claim} for polynomials. By linearity, we may restrict ourselves to monomials, $f(z)=z^k$ and $g(z)=z^l$, and by symmetry, we can assume that $l\ge k$. Applying Lemma \ref{lemma:DirichletIntegralAsADoubleSum}, and Proposition \ref{prop:PropertiesOfBetak} $(ii)$ with $n=0$, we obtain
		\begin{multline*}
		\langle z^k,z^l\rangle_{T,e}
		=
		\sum_{j=0}^{m-1}\binom{k}{j}\hat{\mu}_j(l-k)
		\\
		= 
		\sum_{j=0}^{\infty}\binom{k}{j}\langle\beta_j(T)e,T^{l-k}e\rangle_\HC
		=
		\langle T^{*k}\beta_0(T)T^ke,T^{l-k}e\rangle_\HC
		=
		\langle T^ke,T^le\rangle_\HC.
		\end{multline*}
		This completes the proof of Lemma \ref{lemma:SmoothFunctionalCalculusIsometry}.
	\end{proof}
	
	Lemma \ref{lemma:SmoothFunctionalCalculusIsometry} implies that $\|f-g\|_{\MU}=0$ if and only if $f(T)e=g(T)e$. This yields a well-defined isometric operator $\tilde U:f(T)e\mapsto f\in\DC_a/\KC_{T,e}$. Since $e$ is a cyclic vector for $T$, $\tilde U$ extends to a uniquely determined unitary operator $U:\HC\to\DC_{\MU}^2$. The fact that $T=U^*M_{\MU}U$ and $Ue=1$ follows from the definition of $\tilde U$. 
	
	To prove the final part of Theorem \ref{thm:ModelTheorem}, let $T_j:\HC_j\to\HC_j$, $j\in\{1,2\}$, be $m$-isometries with cyclic vectors $e_j$. If there exists a unitary map $U:\HC_1\to\HC_2$ such that $T_1=U^*T_2U$ and $Ue_1=e_2$, then a straightforward verification shows that $\MU_1=\MU_2$. Conversely, if $\MU_1=\MU_2$, then $M_{T_1,e}=M_{T_2,e}$, hence $T_1=U_1^*M_{T_1}U_1= U_1^*M_{T_2}U_1=U_1^*U_2T_2U_2^*U_1$, and $U_2^*U_1e_1=U_2^*1=e_2$. Since $U_2^*U_1:\HC_1\to\HC_2$ is unitary, this concludes the proof of Theorem \ref{thm:ModelTheorem}.
	
	\section{The Agler--Stankus model}\label{sec:AglerStankus}
	
	The authors of \cite{Agler-Stankus1995:m-IsometricTransformationsOfHilbertSpaceI} regard $\DC$ and $\DC_a$ as spaces of smooth functions on $\TB$. Hence, we introduce the notation $M_{e^{i\theta}}$ as a complement to $M_z$. 
	
	Define the operator $D:\DC'\to\DC'$ by $\widehat{D\mu}(k)=|k|\hat \mu(k)$. If $\varphi\in\DC_a$, then $D\varphi(e^{i\theta})=\frac{1}{i}\frac{\diff }{\diff \theta}\varphi(e^{i\theta})$. Furthermore, let
	\[
	D^{(n)}=D\cdot (D-1)\cdot\ldots\cdot(D-n+1),\quad n\in\ZB_{\ge 0}.
	\]
	A \textit{distribution Toeplitz operator} (DTO) is a map $L:\DC_a\to\DC_a'$ given by 
	\begin{equation}
	L(\varphi)(\psi)=\sum_{n=0}^{m-1}\beta_n((D^{(n)}\varphi)\psi)
	\end{equation}
	where $m\in\ZB_{\ge 1}$, $\beta_0,\ldots,\beta_{m-1}\in\DC'$, and $\beta_{m-1}\ne 0$. We refer to $m$ as the \textit{order} of $L$. 
	
	\begin{remark}
		Our definitions deviate by convention from \cite{Agler-Stankus1995:m-IsometricTransformationsOfHilbertSpaceI} in two respects: First, if $\varphi\in\DC_a$, then $D\varphi$ denotes the same thing in both papers, whereas if $\conj{\varphi}\in\DC_a$, then $D\varphi$ in our notation is the negative of the same expression in \cite{Agler-Stankus1995:m-IsometricTransformationsOfHilbertSpaceI}. Second, we say that the above DDO has order $m$, rather than $(m-1)$.
	\end{remark}
	
	Let $A$ be a DTO of order $m\ge 2$. If there exists $c>1$ such that
	\begin{equation}\label{eq:ADOPositivityCondition}
	A-c^{-2}e^{-i\theta}Ae^{i\theta}\ge 0,
	\end{equation}
	i.e. $A(\varphi)(\conj{\varphi})-c^{-2}A(M_{e^{i\theta}}\varphi)(\conj{M_{e^{i\theta}}\varphi})\ge 0$ for $\varphi\in\DC_a$,
	then we call $A$ an \textit{analytic Dirichlet operator} (ADO) of order $m$. A DTO of order $1$ is an ADO if $\beta_0$ is a positive measure.
	
	Let $A$ be an ADO, and define the sesquilinear form
	\begin{equation}
	[\varphi,\psi]_A=A(\varphi)(\conj{\psi}),
	\end{equation}
	on $\DC_a$.	This form is positive semi-definite (c.f. \cite[Lemma 3.18]{Agler-Stankus1995:m-IsometricTransformationsOfHilbertSpaceI}), and we denote the corresponding seminorm by $\|\cdot\|_A$. Let $\KC_A=\mathrm{Ker}\|\cdot\|_A$, and define $H_A^2$ as the completion of  $\DC_a/\KC_A$ with respect to $\|\cdot\|_A$.
	
	\begin{lemma}[{\cite[Lemma 3.18]{Agler-Stankus1995:m-IsometricTransformationsOfHilbertSpaceI}}]
		If $A$ is an \textnormal{ADO}, then $M_{e^{i\theta}}$ is a well-defined operator on $\DC_a/\KC_A$, and extends uniquely to a bounded linear operator on the space $H^2_A$.
	\end{lemma}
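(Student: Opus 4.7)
The plan is to read off the boundedness of $M_{e^{i\theta}}$ directly from the ADO positivity condition, which was designed precisely for this purpose. The key observation is that the inequality $A-c^{-2}e^{-i\theta}Ae^{i\theta}\ge 0$ is, after unwinding the definition of the sesquilinear form $[\cdot,\cdot]_A$, exactly the statement
\[
\|M_{e^{i\theta}}\varphi\|_A^2 \;=\; A(M_{e^{i\theta}}\varphi)(\conj{M_{e^{i\theta}}\varphi})\;\le\; c^2 A(\varphi)(\conj\varphi)\;=\;c^2\|\varphi\|_A^2,\qquad \varphi\in\DC_a.
\]
Thus for ADOs of order $m\ge 2$, $M_{e^{i\theta}}$ is automatically bounded on $(\DC_a,\|\cdot\|_A)$ with norm at most $c$. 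For the order-$1$ case, by definition $\beta_0$ is a positive measure and $[\varphi,\psi]_A=\beta_0(\varphi\conj\psi)$, so the pointwise identity $|e^{i\theta}\varphi(e^{i\theta})|^2=|\varphi(e^{i\theta})|^2$ on $\TB$ gives $\|M_{e^{i\theta}}\varphi\|_A=\|\varphi\|_A$; that is, $M_{e^{i\theta}}$ is an isometry in this case.

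The norm bound is all we need. First, it implies that $M_{e^{i\theta}}$ preserves the kernel $\KC_A$: if $\|\varphi\|_A=0$, then $\|M_{e^{i\theta}}\varphi\|_A\le c\|\varphi\|_A=0$, so $M_{e^{i\theta}}\varphi\in\KC_A$. Consequently $M_{e^{i\theta}}$ descends to a well-defined linear map on the quotient $\DC_a/\KC_A$, and by the same bound this quotient map is a bounded linear operator of norm at most $c$ with respect to the norm $\|\cdot\|_A$ on $\DC_a/\KC_A$. Since $H_A^2$ is by definition the completion of $\DC_a/\KC_A$, the standard fact that bounded linear operators on a dense subspace of a Banach space extend uniquely to the completion yields the desired extension to $H_A^2$, with the same norm bound.

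I do not expect any real obstacle here; every step is an unwrapping of definitions, with the only substantive input being the positivity condition \eqref{eq:ADOPositivityCondition} built into the definition of ADO (plus the trivial order-$1$ case). The mild care point is to check that the two alternative definitions of ADO (for $m\ge 2$ versus $m=1$) are handled separately, since the order-$1$ case is not covered by \eqref{eq:ADOPositivityCondition} but is even easier.
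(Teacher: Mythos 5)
Your argument is correct: the ADO condition \eqref{eq:ADOPositivityCondition} is precisely the seminorm bound $\|M_{e^{i\theta}}\varphi\|_A^2\le c^2\|\varphi\|_A^2$ on $\DC_a$ (and in the order-$1$ case multiplication is even isometric, since $\beta_0$ is a positive measure), after which invariance of $\KC_A$, well-definedness on the quotient, and unique bounded extension to the completion $H_A^2$ are standard. Note that the paper itself gives no proof here — the lemma is quoted from Agler--Stankus — and that your argument tacitly uses the positive semi-definiteness of $[\cdot,\cdot]_A$, which the paper likewise takes from that same source before stating the lemma, so this is a legitimate assumption rather than a gap.
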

	
	Let $M_A:H_A^2\to H_A^2$ denote the uniquely determined bounded linear extension of $M_{e^{i\theta}}:\DC_a/\MC_A\to\DC_a/\MC_A$.
	
	\begin{theorem}[{\cite[Theorem 3.23]{Agler-Stankus1995:m-IsometricTransformationsOfHilbertSpaceI}}]\label{thm:AglerStankus}
		Let $m\in\ZB_{\ge 1}$. If $T$ is a bounded $m$-isometry on a Hilbert space $\HC$, and if $e\in\HC$ is a cyclic vector for $T$, then there exists a unique \textnormal{ADO} $A$ of order $m$, and a unitary map $V:\HC\to H_A^2$, such that $T=V^*M_AV$ and $Ve = 1$. Conversely, if $A$ is an \textnormal{ADO} of order $m$, then $M_{A}:H_A^2\to H_A^2$ is a bounded $m$-isometry, with cyclic vector $1$.
	\end{theorem}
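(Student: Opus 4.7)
Having established Theorem \ref{thm:ModelTheorem}, the cleanest route to Theorem \ref{thm:AglerStankus} is to exhibit an explicit dictionary between normalized allowable $m$-tuples $\MU$ (in the sense of Section \ref{sec:ModelTheorem}) and \textnormal{ADO}s of order $m$, under which the respective model spaces $\DC_{\MU}^2$ and $H_A^2$, their shift operators, and the cyclic vector $1$ are identified.

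The first step is to compare both norms on monomials. Since $D^{(n)}z^k = (k)_n z^k$, one obtains $A(z^k)(\bar{z}^l) = \sum_{n=0}^{m-1}(k)_n\hat{\beta}_n(l-k)$, while Lemma \ref{lemma:DirichletIntegralAsADoubleSum} yields $\DC_{\mu,n}(f) = \sum_{k,l}\binom{k\wedge l}{n}\hat{f}(k)\overline{\hat{f}(l)}\hat{\mu}(l-k)$. Using the Hermitian symmetry of $[\cdot,\cdot]_A$ forced by positive semi-definiteness, the asymmetric Pochhammer weight $(k)_n = n!\binom{k}{n}$ appearing in the \textnormal{ADO} form may be replaced by its symmetrization $\tfrac{n!}{2}\bigl(\binom{k}{n}+\binom{l}{n}\bigr)$; expanding this in the basis $\{\binom{k\wedge l}{j}\}_{j\ge n}$ via standard Pochhammer identities should yield an upper-triangular invertible linear map between $(\beta_0,\ldots,\beta_{m-1})$ and $(\mu_0,\ldots,\mu_{m-1})$ under which $\|\cdot\|_A = \|\cdot\|_{\MU}$ on $\DC_a$.

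Next, I would verify that the \textnormal{ADO} positivity condition \eqref{eq:ADOPositivityCondition} is precisely allowability \eqref{eq:AllowableSequence} under the dictionary, as both translate to $\|f\|^2 \ge c^{-2}\|M_z f\|^2$ on $\DC_a$. Normalization $\hat{\mu}_0(0) = 1$ selects a canonical choice $\hat{\beta}_0(0)=1$, so that the dictionary becomes a bijection. With this in place, the identification $\DC_{\MU}^2 = H_A^2$ via the common completion procedure is tautological, as is $M_{\MU} = M_A$. The forward direction of Theorem \ref{thm:AglerStankus} then follows by applying Theorem \ref{thm:ModelTheorem} to obtain $\MU$ and $U:\HC\to\DC_{\MU}^2$, and then transporting through the dictionary to get $A$ and $V=U$; the converse is obtained by reversing the dictionary. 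Uniqueness of $A$ follows from Lemma \ref{lemma:SmoothFunctionalCalculusIsometry}: the form $[f,g]_A$ coincides with $\langle f(T)e,g(T)e\rangle_{\HC}$ on $\DC_a$, hence is determined by $(T,e)$, and the map from a normalized \textnormal{ADO} to its form is injective.

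The main obstacle is the algebraic identity underlying the first step: symmetrizing the Pochhammer weights and re-expanding in the $\binom{k\wedge l}{j}$ basis, while simultaneously verifying that the symmetrization is legitimate, i.e.\ that Hermiticity of $[\cdot,\cdot]_A$ (forced by positive semi-definiteness) encodes exactly the linear constraints on the $\hat{\beta}_n$'s needed for the two forms to agree term-by-term on $\DC_a\times\DC_a$. Keeping track of this triangular change of basis, and handling the order-$1$ case separately (where the \textnormal{ADO} condition reduces simply to $\beta_0$ being a positive measure), is the principal bookkeeping challenge.
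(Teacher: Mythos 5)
First, a point of orientation: the paper does not prove Theorem \ref{thm:AglerStankus} at all --- it is imported verbatim from Agler--Stankus \cite[Theorem 3.23]{Agler-Stankus1995:m-IsometricTransformationsOfHilbertSpaceI}, and Section \ref{sec:AglerStankus} only \emph{compares} that model with Theorem \ref{thm:ModelTheorem}. So you are attempting something the paper deliberately avoids, namely deriving the Agler--Stankus theorem from the Dirichlet-integral model via an explicit dictionary between tuples $(\mu_0,\ldots,\mu_{m-1})$ and $(\beta_0,\ldots,\beta_{m-1})$. The paper itself warns that this dictionary ``is quite complicated'' and is only recorded for $m=2$, where $\beta_1=\mu_1$ but $\beta_0=\mu_0-(1-P)(D\mu_1)$.

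That $m=2$ formula already shows that your pivotal algebraic step is wrong as stated. You propose to replace the weight $(k)_n=n!\binom{k}{n}$ in $[z^k,z^l]_A=\sum_n (k)_n\hat\beta_n(l-k)$ by the symmetrization $\tfrac{n!}{2}\bigl(\binom{k}{n}+\binom{l}{n}\bigr)$, ``using Hermitian symmetry of $[\cdot,\cdot]_A$''. Hermiticity of the \emph{aggregate} form only lets you replace $[z^k,z^l]_A$ by $\tfrac12\bigl([z^k,z^l]_A+\overline{[z^l,z^k]_A}\bigr)=\tfrac12\sum_n\bigl((k)_n\hat\beta_n(l-k)+(l)_n\overline{\hat\beta_n(k-l)}\bigr)$; your substitution keeps the coefficient $\hat\beta_n(l-k)$ and is legitimate only if each $\beta_n$ is individually Hermitian. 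It is not: $(1-P)(D\mu_1)$ lives entirely on negative frequencies, so already $\beta_0$ fails to be real, and for $m=2$ your symmetrized form would force $j\,\hat\beta_1(j)=0$, i.e.\ $\beta_1$ a multiple of $\lambda$. The correct dictionary is obtained differently: fix $j=l-k$, note that both $\sum_n(k)_n\hat\beta_n(j)$ and $\sum_n\binom{k\wedge l}{n}\hat\mu_n(j)$ are polynomials in $k$, and match coefficients separately for $j\ge 0$ (where $k\wedge l=k$) and $j<0$ (where $k\wedge l=k+j$); this produces the asymmetric, $P$- and $D$-dependent relations and is exactly the ``principal bookkeeping'' you defer, so what you have is a programme rather than a proof. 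Two further gaps of the same kind: identifying the ADO condition \eqref{eq:ADOPositivityCondition} with allowability \eqref{eq:AllowableSequence} silently uses positive semi-definiteness of $[\cdot,\cdot]_A$, which for ADOs is Agler--Stankus's Lemma 3.18 (cited, not proved here), and ADOs require $\beta_{m-1}\neq 0$ while allowable tuples allow $\mu_{m-1}=0$; and your remark that normalization ``selects $\hat\beta_0(0)=1$'' is off target --- the form determines every $\hat\beta_n(j)$ with no normalization (expand $[z^k,z^{k+j}]_A$ in the falling-factorial basis in $k$), while conversely Theorem \ref{thm:AglerStankus} permits non-unit cyclic vectors, so Theorem \ref{thm:ModelTheorem} would first need the trivial but unaddressed rescaling step.
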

	
	Let $T$ be an $m$-isometry with a cyclic vector $e$. Applying Theorems \ref{thm:ModelTheorem} and \ref{thm:AglerStankus} we obtain that
	\[	
	\langle z^m,z^n\rangle_{\DC_{T,e}^2}
	=
	\langle T^me,T^ne\rangle_{\HC}
	=
	[e^{im\theta},e^{in\theta}]_A.
	\]
	This is of course not unexpected, since both theorems yield operator models of the same object. However, the relation between the two models is quite complicated, as is illustrated by the following relation between $\mu_0,\mu_1$ and $\beta_0,\beta_1$, valid in the case of $2$-isometries:
	\[
	\beta_1=\mu_1,\quad\textnormal{while}\quad \beta_0=\mu_0-(1-P)(D\mu_1),
	\]
	where $P$ denotes the analytic projection on $\DC'$, i.e. $(P\mu)(\conj{f})=\sum_{k=0}^\infty \hat \mu(k)\conj{\hat f(k)}$. These relations are observed in \cite{Agler-Stankus1995:m-IsometricTransformationsOfHilbertSpaceI}, and the respective distributions $\mu_0$ and $\mu_1$ are denoted the \textit{intercept} and \textit{slope} of the pair $(T,e)$, but this direction is not investigated for orders higher than $2$.
	
	A similarity between the Theorems \ref{thm:ModelTheorem} and \ref{thm:AglerStankus} is that cyclic higher order isometries are characterized in terms of a number of parameters, $\mu_0,\ldots,\mu_{m-1}$ and $\beta_0,\ldots,\beta_{m-1}$ respectively. However, while \cite{Agler-Stankus1995:m-IsometricTransformationsOfHilbertSpaceI} investigates ADOs, which are aggregate objects, our approach puts more focus on the individual parameters. A notable advantage of this is that the only structure of $\beta_0,\ldots,\beta_{m-1}$ obtained in \cite{Agler-Stankus1995:m-IsometricTransformationsOfHilbertSpaceI} is that $\beta_{m-1}$ is a positive measure, whereas we are able to give a priori regularity estimates for each element of an allowable $m$-tuple (Propositions \ref{prop:PropertiesOfmuk} and \ref{prop:PropertiesOfmukImproved}). Another advantage of our approach is that, even though we do not obtain a characterization of allowable $m$-tuples, we are able use Theorem \ref{thm:ModelTheorem} in order to construct explicit examples of allowable $m$-tuples, hence of higher order isometries.
	
	\section{Some more properties of $\DC_{\mu,n}(f)$}\label{sec:MorePropertiesOfDirichletIntegrals}
	
	Recall the operator $D:\DC'\to\DC'$, defined by $\widehat{D\mu}(k)=|k|\hat{\mu}(k)$ in the previous section. If we think of $M_z$ as a ``forward'' shift, then the next result may be viewed as a ``backward'' shift analogue of Proposition \ref{prop:MzPlaysNicelyWithDirichletIntegrals}:
	
	\begin{proposition}\label{prop:DifferentiationPlaysWithDirichletIntegral}
		Let $\mu\in\DC'$ and $n\in\ZB_{\ge 0}$. Then
		\[
		\DC_{\mu,n}(f')=(n+1)(n+2)\DC_{\mu,n+2}(f)+(n+1)^2\DC_{\mu,n+1}(f)+(n+1)\DC_{D\mu,n+1}(f).
		\]
	\end{proposition}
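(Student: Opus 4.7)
The plan is to apply Lemma \ref{lemma:DirichletIntegralAsADoubleSum} to all four Dirichlet integrals appearing in the statement and reduce the claim to a single algebraic identity between binomial coefficients. First, since $\widehat{f'}(k)=(k+1)\hat f(k+1)$ for $k\ge 0$, Lemma \ref{lemma:DirichletIntegralAsADoubleSum} gives
\[
\DC_{\mu,n}(f')=\sum_{k,l=0}^\infty(k+1)(l+1)\binom{k\wedge l}{n}\hat f(k+1)\conj{\hat f(l+1)}\hat\mu(l-k).
\]
Shifting $k\mapsto k-1$, $l\mapsto l-1$ and using $(k-1)\wedge(l-1)=(k\wedge l)-1$ for $k,l\ge 1$, this becomes
\[
\DC_{\mu,n}(f')=\sum_{k,l=1}^\infty kl\binom{(k\wedge l)-1}{n}\hat f(k)\conj{\hat f(l)}\hat\mu(l-k).
\]

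Next I would apply Lemma \ref{lemma:DirichletIntegralAsADoubleSum} to each term on the right-hand side of the proposed identity. The term $\DC_{D\mu,n+1}(f)$ carries the factor $\widehat{D\mu}(l-k)=|l-k|\hat\mu(l-k)$. Writing $a=k\wedge l$, $b=k\vee l$, so that $kl=ab$ and $|l-k|=b-a$, and noting that the boundary cases $k=0$ or $l=0$ contribute trivially on both sides (since $\binom{0}{n+1}=\binom{0}{n+2}=0$ while $kl=0$), matching coefficients of $\hat f(k)\conj{\hat f(l)}\hat\mu(l-k)$ reduces the whole proposition to the elementary identity
\[
ab\binom{a-1}{n}=(n+1)(n+2)\binom{a}{n+2}+(n+1)^2\binom{a}{n+1}+(n+1)(b-a)\binom{a}{n+1},
\]
valid for integers $a\ge 1$ and $b\ge a$.

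To verify this, I would factor out $(n+1)\binom{a}{n+1}$ from the right-hand side using the relations $(n+2)\binom{a}{n+2}=(a-n-1)\binom{a}{n+1}$ and $ab\binom{a-1}{n}=b(n+1)\binom{a}{n+1}$; the bracket $[(a-n-1)+(n+1)+(b-a)]$ telescopes to $b$, giving the result. There is no real obstacle here: absolute convergence of every series in sight is guaranteed by Lemma \ref{lemma:DirichletIntegralAsADoubleSum}, and the only bookkeeping requires care at the boundary indices $k=0$ and $l=0$, which the standing convention $\binom{a}{j}=0$ for $j>a$ handles automatically. The mild subtlety is recognizing the correct decomposition of $kl\binom{a-1}{n}$ that isolates the three target terms, which is precisely what motivates the specific coefficients $(n+1)(n+2)$, $(n+1)^2$, and $(n+1)$ in the statement.
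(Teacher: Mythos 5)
Your proof is correct and follows essentially the same route as the paper: both expand $\DC_{\mu,n}(f')$ via Lemma \ref{lemma:DirichletIntegralAsADoubleSum}, shift summation indices, and match coefficients of $\hat f(k)\conj{\hat f(l)}\hat\mu(l-k)$ using a binomial identity. Your combined identity $ab\binom{a-1}{n}=(n+1)(n+2)\binom{a}{n+2}+(n+1)^2\binom{a}{n+1}+(n+1)(b-a)\binom{a}{n+1}$ is exactly the paper's pair of identities (obtained there by splitting $(1+k)(1+l)=(1+k\wedge l)^2+(1+k\wedge l)|l-k|$) written in one line, so the difference is purely organizational.
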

	\begin{proof}
		Writing $k\vee l=k\wedge l+|l-k|$, we observe that 
		\[
		(1+k)(1+l)=(1+k\wedge l)(1+k\vee l)=(1+k\wedge l)^2+(1+k\wedge l)|l-k|.
		\]
		We also need the identities 
		\[
		\binom{k\wedge l}{n}(1+k\wedge l)=(n+1)\binom{k\wedge l+1}{n+1}.
		\]
		and
		\[
		\binom{k\wedge l}{n}(1+k\wedge l)^2=(n+1)(n+2)\binom{k\wedge l+1}{n+2}+(n+1)^2\binom{k\wedge l+1}{n+1},
		\]
		Using Lemma \ref{lemma:DirichletIntegralAsADoubleSum}, we compute
		\begin{align*}
		\DC_{\mu,n}(f')
		={}&
		\sum_{k,l=n}^\infty \binom{k\wedge l}{n}(1+k)(1+l)\hat f(k+1)\conj{\hat f(l+1)}\hat{\mu}(l-k)
		\\
		={}&
		(n+1)(n+2)\sum_{k,l=n}^\infty\binom{k\wedge l+1}{n+2}\hat f(k+1)\conj{\hat f(l+1)}\hat{\mu}(l-k)
		\\
		&+			
		(n+1)^2\sum_{k,l=n}^\infty\binom{k\wedge l+1}{n+1}\hat f(k+1)\conj{\hat f(l+1)}\hat{\mu}(l-k)
		\\
		&+
		(n+1)\sum_{k,l=n}^\infty\binom{k\wedge l+1}{n+1}\hat f(k+1)\conj{\hat f(l+1)}|l-k|\hat{\mu}(l-k).
		\end{align*}
		By a shift of summation indices, and another application of Lemma \ref{lemma:DirichletIntegralAsADoubleSum}, this is equal to the right-hand side of the desired identity.
	\end{proof}
	
	By Lemma \ref{lemma:DirichletIntegralAsADoubleSum}, $\DC_{ \lambda,n}(f)=\sum_{k=0}^{\infty}\binom{k}{n}|\hat f(k)|^2$. Clearly 
	\[
	|\hat f(n)|^2+\DC_{ \lambda,n+1}(f)\approx \DC_{ \lambda,n}(f)+\DC_{ \lambda,n+1}(f).
	\]
	Using this observation, an application of Proposition \ref{prop:DifferentiationPlaysWithDirichletIntegral} to $\mu=\lambda$ yields the following result:
	
	\begin{proposition}\label{prop:GeneralizedLittlewood--PaleyTheorem}
		Let $n\in\ZB_{\ge 0}$. Then
		\[
		\DC_{ \lambda,n}(f')\approx |\hat f(n+1)|^2+\DC_{ \lambda,n+2}(f)\approx \DC_{ \lambda,n+1}(f)+\DC_{ \lambda,n+2}(f),\quad f\in\DC_a.
		\]
	\end{proposition}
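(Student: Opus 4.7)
The plan is to reduce the proposition directly to Proposition \ref{prop:DifferentiationPlaysWithDirichletIntegral}, exploiting two simplifications that occur when $\mu = \lambda$. Specializing that formula gives
\[
\DC_{\lambda,n}(f') = (n+1)(n+2)\DC_{\lambda,n+2}(f) + (n+1)^2 \DC_{\lambda,n+1}(f) + (n+1)\DC_{D\lambda, n+1}(f).
\]
The first observation I would make is that $D\lambda = 0$. Indeed, the Fourier coefficients of arc length measure are $\hat \lambda (k)=\delta_{k,0}$, so $\widehat{D\lambda}(k) = |k|\delta_{k,0} \equiv 0$, and hence by Lemma \ref{lemma:DirichletIntegralAsADoubleSum} the term $\DC_{D\lambda,n+1}(f)$ vanishes identically. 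This already yields
\[
\DC_{\lambda,n}(f') \approx \DC_{\lambda,n+1}(f) + \DC_{\lambda,n+2}(f),
\]
which is the second $\approx$ in the conclusion, modulo swapping one summand.

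Next, I would verify the equivalence $|\hat f(n+1)|^2 + \DC_{\lambda,n+2}(f) \approx \DC_{\lambda,n+1}(f) + \DC_{\lambda,n+2}(f)$, which is the author's remark rephrased with $n$ replaced by $n+1$. One direction is trivial since $|\hat f(n+1)|^2 \le \DC_{\lambda,n+1}(f)$. For the reverse direction, the key numerical fact is that for $k \ge n+2$ the ratio $\binom{k}{n+1}/\binom{k}{n+2} = (n+2)/(k-n-1)$ is bounded by $n+2$, so
\[
\DC_{\lambda,n+1}(f) - |\hat f(n+1)|^2 = \sum_{k\ge n+2}\binom{k}{n+1}|\hat f(k)|^2 \;\lesssim\; \DC_{\lambda,n+2}(f),
\]
using the expression $\DC_{\lambda,n+1}(f) = \sum_{k\ge n+1}\binom{k}{n+1}|\hat f(k)|^2$ from Lemma \ref{lemma:DirichletIntegralAsADoubleSum}. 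Combining these two equivalences yields the chain of comparabilities stated in the proposition.

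Since Proposition \ref{prop:DifferentiationPlaysWithDirichletIntegral} is already established, the present statement reduces to purely combinatorial bookkeeping with binomial coefficients and a one-line computation of $\widehat{D\lambda}$. I do not foresee any significant obstacle; the only point deserving a moment's care is the algebraic manipulation used to compare $\binom{k}{n+1}$ and $\binom{k}{n+2}$ on the tail $k \ge n+2$, which is what isolates the isolated coefficient $\hat f(n+1)$ as the exceptional summand.
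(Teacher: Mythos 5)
Your proof is correct and follows essentially the same route as the paper: the paper likewise specializes Proposition \ref{prop:DifferentiationPlaysWithDirichletIntegral} to $\mu=\lambda$ (where the $\DC_{D\lambda,n+1}$ term vanishes since $\widehat{D\lambda}(k)=|k|\hat\lambda(k)\equiv 0$) and combines it with the observation $|\hat f(n)|^2+\DC_{\lambda,n+1}(f)\approx \DC_{\lambda,n}(f)+\DC_{\lambda,n+1}(f)$, which you justify explicitly via the binomial-coefficient ratio. Nothing is missing; you have merely written out details the paper leaves implicit.
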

	
	For $n=0$, the above result is just the classical Littlewood-Paley theorem, e.g. \cite[Chapter VI, Lemma 3.2]{Garnett2007:BoundedAnalyticFunctions}, applied to $f'$. For $n\ge 1$, it is a well-known fact from the theory of standard weighted Bergman spaces, e.g. \cite[Proposition 1.11]{Hedenmalm-Korenblum-Zhu2000:TheoryOfBergmanSpaces}.
	
	Our next goal is to relate $\mu$-weighted Dirichlet integrals to $\lambda$-weighted ones. In order to achieve the sufficient generality, we also need the following:
	
	\begin{proposition}\label{prop:PmudAIsCarlesonMeasure}
		Let $\mu$ be a finite positive Borel measure on $\TB$. Then the measure $\nu$ given by $\diff\nu=P_\mu\diff A$ is a Carleson measure. Moreover, $\|\nu\|_{CM}\lesssim \mu (\TB)$.
	\end{proposition}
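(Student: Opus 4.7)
My plan is to invoke the Carleson embedding theorem recalled just above the proposition, which reduces the Carleson measure bound $\|\nu\|_{CM}\lesssim\mu(\TB)$ to the Hardy-space embedding estimate
\[
\int_\DB |f(z)|^2 P_\mu(z)\,\diff A(z)\lesssim \mu(\TB)\|f\|_{H^2}^2,\qquad f\in H^2.
\]
Writing $P_\mu(z)=\int_\TB P_z(\zeta)\,\diff\mu(\zeta)$ and applying Fubini--Tonelli to the nonnegative integrand, this is further reduced to showing that
\[
\int_\DB |f(z)|^2 P_z(\zeta)\,\diff A(z)\lesssim \|f\|_{H^2}^2
\]
\emph{uniformly} in $\zeta\in\TB$. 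Since $\diff A$ is rotation invariant and the Hardy norm is unchanged if $f(z)$ is replaced by $f(\zeta z)$, I may fix $\zeta=1$ without loss of generality.

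Next, I would expand the Poisson kernel as $P_1(z)=\sum_{n\ge 0}z^n+\sum_{n\ge 1}\bar z^n$ on $\DB$ and, using the elementary orthogonality $\int_\DB z^m\bar z^n\,\diff A(z)=\frac{\pi}{m+1}\delta_{m,n}$, obtain the explicit formula
\[
\int_\DB z^k\bar z^l P_1(z)\,\diff A(z)=\frac{\pi}{(k\vee l)+1},\qquad k,l\in\ZB_{\ge 0}.
\]
For $f(z)=\sum_k a_k z^k\in H^2$ this yields $\int_\DB |f|^2 P_1\,\diff A=\pi\sum_{k,l\ge 0}\frac{a_k\overline{a_l}}{(k\vee l)+1}$, so it remains to verify that the positive Hermitian kernel $T_{kl}=((k\vee l)+1)^{-1}$ defines a bounded operator on $\ell^2(\ZB_{\ge 0})$.

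To finish, I would apply Schur's test with the weight $\omega(k)=(k+1)^{-1/2}$. The row sums split as
\[
\sum_{l\ge 0}T_{kl}\omega(l)=\sum_{l=0}^{k}\frac{1}{(k+1)(l+1)^{1/2}}+\sum_{l=k+1}^\infty\frac{1}{(l+1)^{3/2}},
\]
and each piece is $\lesssim (k+1)^{-1/2}=\omega(k)$; by the symmetry of $T$, the column sums obey the same bound, so Schur's lemma yields $\|T\|_{\ell^2\to\ell^2}\lesssim 1$. Tracing back through the chain of reductions gives $\|\nu\|_{CM}\lesssim \mu(\TB)$.

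The main substance lies in the coefficient computation; the remaining ingredients are Fubini, a rotation, and a textbook application of Schur's test. A natural alternative route would be to verify the Carleson-box estimate $\int_{S(I)}P_z(\zeta)\,\diff A(z)\lesssim|I|$ uniformly in $\zeta\in\TB$ via direct geometric estimates on the Poisson kernel (splitting according to whether $\zeta$ is near or far from $I$), but this seems marginally longer than the Fourier-theoretic route sketched above.
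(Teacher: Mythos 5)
Your proof is correct, but it takes a genuinely different route from the paper's. You reduce the Carleson bound, via the equivalence $C^2\approx\|\nu\|_{CM}$ recalled in the preliminaries, to the embedding $\int_\DB|f|^2P_\mu\,\diff A\lesssim\mu(\TB)\|f\|_{H^2}^2$; Fubini and rotation invariance then reduce matters to $\mu=\delta_1$, and the coefficient identity $\int_\DB z^k\conj{z}^lP_1\,\diff A=\pi/((k\vee l)+1)$ together with Schur's test with weight $(k+1)^{-1/2}$ finishes. The paper instead attacks the defining supremum directly: it recognizes $\frac{|w|^2}{\pi}\int_\DB|1-\conj{w}z|^{-2}P_\mu\,\diff A$ as the weighted Dirichlet integral $\DC_{\mu,1}(f_w)$ of $f_w(z)=\log\frac{1}{1-\conj{w}z}$, reduces to a point mass by Fubini and rotation exactly as you do, and then evaluates the resulting double series in closed form by geometric summation, checking its asymptotics as $|w|\to1^-$; that explicit computation is kept on purpose because it is reused in Example \ref{ex:OuterTermsDoNotControlTheMiddle}. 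Your route buys a softer, more standard argument (a Schur test instead of closed-form asymptotics), at the price of invoking the Carleson embedding equivalence, while the paper's route is self-contained at this point and produces a formula needed later. One small point to tighten: for $f\in H^2$ the coefficients are only square-summable and $P_1$ is unbounded near $1$, so the termwise integration giving $\int_\DB|f|^2P_1\,\diff A=\pi\sum_{k,l}a_k\conj{a_l}/((k\vee l)+1)$ deserves a word of justification --- integrate over $r\DB$, let $r\to1^-$, and use your Schur bound to supply the absolutely convergent dominating series, in the spirit of the proof of Lemma \ref{lemma:DirichletIntegralAsADoubleSum}.
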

	\begin{proof}
		Let $f_w(z)=\log \frac{1}{1-\conj{w}z} = \sum_{k=1}^\infty \frac{\conj{w}^k}{k} z^{k}$. By the monotone convergence theorem,
		\[
		\frac{|w|^2}{\pi}\int_ \DB \frac{1}{|1-\conj{w} z|^2}P_\mu (z)\diff A(z) 
		=
		\DC_{\mu,1}(f_w).
		\]
		Hence, we need to verify that $\DC_{ \mu,1}(f_w)\lesssim \frac{|w|^2\mu(\TB)}{1-|w|^2}$. 
		
		Let $\DC_{\zeta,1}(f)=\DC_{ \delta_\zeta,1}(f)$. By Fubini's theorem, $\DC_{\mu,1}(f)=\mu(\DC_{\zeta,1}(f))$. Therefore, it suffices to consider $\mu=\delta_{\zeta}$. By rotational symmetry, we may assume that $\zeta=1$. Let $r=|w|$. Using Lemma \ref{lemma:DirichletIntegralAsADoubleSum}, and geometric summation,
		\begin{multline*}
		\DC_{1,1}(f) \le \sum_{k,l=1}^\infty \frac{r^{k+l}}{k\vee l}
		=
		\sum_{k=1}^\infty \frac{r^{2k}}{k}+2\sum_{k=1}^\infty\frac{r^k}{k} \sum_{l=1}^{k-1}r^l
		\\
		=
		\log\left(\frac{1}{1-r^2}\right)
		+
		\frac{2r}{1-r}\log\left(\frac{1}{1-r}\right)
		-
		\frac{2}{1-r}\log\left(\frac{1}{1-r^2}\right).
		\end{multline*}
		One can easily verify that, the above right-hand side has the adequate asymptotics as $r\to 0^+$, and as $r\to 1^-$.
	\end{proof}
	
	\begin{remark}
		In the above proof, one can also use that $z\mapsto \log\frac{1}{1-z}$ has bounded mean oscillation, and apply \cite[Chapter VI, Lemma 3.3 and Theorem 3.4]{Garnett2007:BoundedAnalyticFunctions}. I owe this observation to Alexandru Aleman. However, the above calculation will be reused in Example \ref{ex:OuterTermsDoNotControlTheMiddle}.
	\end{remark}
	
	\begin{proposition}\label{prop:RelationLambdaAndMu}
		Let $\mu$ be a non-vanishing finite positive measure on $\TB$.
		\begin{enumerate}[(i)]
			\item If $n\in\ZB_{\ge 0}$, then
			\[
			\DC_{ \lambda,n}(f)\lesssim |\hat f(n)|^2+\DC_{ \mu,n+1}(f).
			\]
			\item If $n\in\ZB_{\ge 1}$, then
			\[
			\DC_{ \mu,n}(f)\lesssim |\hat f(n)|^2+\DC_{ \lambda,n+1}(f)\approx \DC_{ \lambda,n}(f)+\DC_{ \lambda,n+1}(f).
			\]
		\end{enumerate}
	\end{proposition}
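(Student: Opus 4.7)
My plan for part $(i)$ is to exploit the Fourier-series representation from Lemma \ref{lemma:DirichletIntegralAsADoubleSum}, which gives $\DC_{\lambda,n}(f) = |\hat f(n)|^2 + \sum_{k\geq n+1}\binom{k}{n}|\hat f(k)|^2$, so the task reduces to dominating this tail by $\DC_{\mu,n+1}(f)$. I would apply the lower Poisson estimate $P_\mu(z)\gtrsim\mu(\TB)(1-|z|^2)$ from \eqref{eq:PoissonKernelEstimates} to obtain
\[
\DC_{\mu,n+1}(f)\gtrsim \mu(\TB)\int_\DB|f^{(n+1)}(z)|^2(1-|z|^2)^{n+1}\,\diff A(z),
\]
then expand in polar coordinates exactly as in the proof of Lemma \ref{lemma:DirichletIntegralAsADoubleSum} (the Beta-function computation) to recognise the right-hand side as a constant multiple of $\sum_{k\geq n+1}\binom{k}{n+1}(k+1)^{-1}|\hat f(k)|^2$. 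A short binomial check, using the identity $(k+1)\binom{k}{n}=\frac{k+1}{n+1}(k-n)^{-1}(n+1)(k-n)\binom{k}{n+1}$ equivalent to $(k+1)\binom{k}{n}\leq(n+1)(n+2)\binom{k}{n+1}$ for $k\geq n+1$, then compares these sums termwise and closes $(i)$.

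For part $(ii)$ I would split on $n\geq 2$ versus $n=1$. For $n\geq 2$, the upper Poisson estimate $P_\mu(z)\lesssim\mu(\TB)(1-|z|^2)^{-1}$ from \eqref{eq:PoissonKernelEstimates} directly yields
\[
\DC_{\mu,n}(f)\lesssim \int_\DB|f^{(n)}(z)|^2(1-|z|^2)^{n-2}\,\diff A(z),
\]
and the right-hand side is a constant multiple of $\DC_{\lambda,n-1}(f')$. Invoking Proposition \ref{prop:GeneralizedLittlewood--PaleyTheorem} at index $n-1$ then gives $\DC_{\lambda,n-1}(f')\approx|\hat f(n)|^2+\DC_{\lambda,n+1}(f)$. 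For $n=1$ the crude upper Poisson bound fails because the resulting weight $(1-|z|^2)^{-1}$ is non-integrable at the boundary, so I would instead appeal to Proposition \ref{prop:PmudAIsCarlesonMeasure} together with the Carleson embedding theorem to conclude $\DC_{\mu,1}(f)=\pi^{-1}\int|f'|^2 P_\mu\,\diff A\lesssim\|f'\|_{H^2}^2=\DC_{\lambda,0}(f')$, after which Proposition \ref{prop:GeneralizedLittlewood--PaleyTheorem} at index $0$ concludes.

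The remaining equivalence $|\hat f(n)|^2+\DC_{\lambda,n+1}(f)\approx \DC_{\lambda,n}(f)+\DC_{\lambda,n+1}(f)$ is a purely Fourier-side comparison: one direction is trivial since $|\hat f(n)|^2$ is the $k=n$ summand of $\DC_{\lambda,n}(f)$, while the other uses $\binom{k}{n}/\binom{k}{n+1}=(n+1)/(k-n)\leq n+1$ on $k\geq n+1$ to give $\DC_{\lambda,n}(f)\lesssim|\hat f(n)|^2+\DC_{\lambda,n+1}(f)$. The only genuine obstacle is the borderline case $n=1$ in $(ii)$, where the Poisson upper bound alone is insufficient and the Carleson measure ingredient of Proposition \ref{prop:PmudAIsCarlesonMeasure} is essential; the preceding portion of Section \ref{sec:MorePropertiesOfDirichletIntegrals} was set up precisely to make this step available.
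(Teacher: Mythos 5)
Your argument is correct and is essentially the paper's own proof: part $(ii)$, including the Carleson-measure treatment of the borderline case $n=1$ via Proposition \ref{prop:PmudAIsCarlesonMeasure}, is identical, and part $(i)$ rests on the same two ingredients (the lower bound in \eqref{eq:PoissonKernelEstimates} and the standard-weight coefficient computation), merely carried out as a termwise Fourier comparison instead of substituting $f\mapsto f'$ and citing Proposition \ref{prop:GeneralizedLittlewood--PaleyTheorem}. One small slip: your displayed ``identity'' $(k+1)\binom{k}{n}=\frac{k+1}{n+1}(k-n)^{-1}(n+1)(k-n)\binom{k}{n+1}$ simplifies to $(k+1)\binom{k}{n+1}$ and is not what you mean, but the inequality you actually use, $(k+1)\binom{k}{n}\le (n+1)(n+2)\binom{k}{n+1}$ for $k\ge n+1$, is correct, since $\binom{k}{n}=\frac{n+1}{k-n}\binom{k}{n+1}$ and $\frac{k+1}{k-n}\le n+2$ there.
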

	\begin{proof}
		$(i)$ Replace $f$ with $f'$. By Proposition \ref{prop:GeneralizedLittlewood--PaleyTheorem},
		\[
		\DC_{ \lambda,n}(f')\approx |\hat f(n+1)|^2+\DC_{ \lambda,n+2}(f).
		\]
		By \eqref{eq:PoissonKernelEstimates},
		\begin{multline*}
		\DC_{ \lambda,n+2}(f)
		\approx
		\int_{\DB}|f^{(n+2)}(z)|^2(1-|z|^2)^{n+1}\diff A(z)
		\\
		\lesssim 
		\int_{\DB}|f^{(n+2)}(z)|^2P\mu(z)(1-|z|^2)^{n}\diff A(z)
		\approx
		\DC_{ \mu,n+1}(f').
		\end{multline*}
		
		$(ii)$ The proof for $n\ge 2$ is similar to the proof of $(i)$:
		\begin{multline*}
		\DC_{ \mu,n}(f)
		\approx
		\int_{\DB}|f^{(n)}(z)|^2P_\mu(z)(1-|z|^2)^{n-1}\diff A(z)
		\\
		\lesssim 
		\int_{\DB}|f^{(n)}(z)|^2(1-|z|^2)^{n-2}\diff A(z)
		\approx
		\DC_{ \lambda,n-1}(f')
		\approx
		|\hat f(n)|^2+\DC_{ \lambda,n+1}(f).
		\end{multline*}
		For $n=1$ we apply the Carleson embedding theorem to $P_\mu\diff A$, which is a Carleson measure by Proposition \ref{prop:PmudAIsCarlesonMeasure}:
		\[
		\DC_{ \mu,1}(f)\approx\int_{\DB}|f'(z)|^2P_\mu(z)\diff A(z)\lesssim \DC_{ \lambda,0}(f')\approx |\hat f(1)|^2+\DC_{ \lambda,2}(f).
		\]
	\end{proof}
	
	\begin{remark}
		Note that if $P_\mu\ge c>0$ on $\DB$, then trivially $\DC_{\lambda,n}(f)\le \frac{1}{c}\DC_{ \mu,n}(f)$. This means that we have gained one order, compared to the above proposition. Similarly, if $P_\mu\le c<\infty$, then $\DC_{ \mu,n}(f)\le c \DC_{ \lambda,n}(f)$. In this case, we may have gained \textit{more} than one order, because in general, $(ii)$ does not extend to $n=0$. Indeed, if $\mu$ is a unital point mass at $1$, then $\DC_{ \mu,0}(f)=|f(1)|^2$. Hence, the corresponding estimate must fail, because the standard Dirichlet space $\DC_{ \lambda,\lambda}^2$ contains unbounded functions, e.g. \cite[Excercise 1.2.2]{ElFallah-Kellay-Mashreghi-Ransford2014:APrimerOnTheDirichletSpace}.
	\end{remark}
	
	We're now prepared to demonstrate that $M_z:\DC_a\to\DC_a$ is bounded with respect to Dirichlet integrals with positive harmonic weights:
	
	\begin{proposition}\label{prop:MzBoundedWRTDirichletIntegrals}
		Let $\mu\in\DC'$ be a finite positive measure, and $n\in\ZB_{\ge 0}$. Then
		\[
		\DC_{\mu,n}(f)\le \DC_{\mu,n}(M_zf)\lesssim |\hat f(n-1)|^2+\DC_{\mu,n}(f),\quad f\in\DC_a.
		\]
	\end{proposition}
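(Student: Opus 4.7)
The plan is to apply Proposition \ref{prop:MzPlaysNicelyWithDirichletIntegrals} to rewrite
\[
\DC_{\mu,n}(M_z f) = \DC_{\mu,n}(f) + \DC_{\mu,n-1}(f).
\]
Positivity of $\mu$ makes each $\DC_{\mu,n-1}(f)\ge 0$ (with $\DC_{\mu,-1}\equiv 0$ by convention; for $n-1=0$ use $\mu(|f|^2)\ge 0$, and for $n-1\ge 1$ the integrand of the defining integral is nonnegative). This gives the lower bound immediately. It thus suffices to prove
\[
\DC_{\mu,n-1}(f) \lesssim |\hat f(n-1)|^2 + \DC_{\mu,n}(f), \quad f\in\DC_a,
\]
and since the case $n=0$ is trivial, I assume $n\ge 1$ from now on.

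My next step is to split $f = f_1 + f_2$ where $f_1(z) = \sum_{k<n}\hat f(k)z^k$ and $f_2(z) = \sum_{k\ge n}\hat f(k) z^k$. Lemma \ref{lemma:DirichletIntegralAsADoubleSum}, together with the fact that $\binom{k\wedge l}{\alpha}$ vanishes for $k\wedge l<\alpha$, gives directly that $\DC_{\mu,n-1}(f_1) = |\hat f(n-1)|^2\hat\mu(0)$ and that $\DC_{\mu,n}(f) = \DC_{\mu,n}(f_2)$ (the cross contributions to $\DC_{\mu,n}$ require $k\wedge l\ge n$, which excludes $f_1$). Since $\DC_{\mu,n-1}$ is a positive semi-definite form, the Cauchy--Schwarz inequality yields
\[
\DC_{\mu,n-1}(f) \le 2\DC_{\mu,n-1}(f_1) + 2\DC_{\mu,n-1}(f_2),
\]
so everything collapses to the clean bound
\[
\DC_{\mu,n-1}(f_2) \le n\DC_{\mu,n}(f_2).
\]

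For this last inequality the plan is to rewrite $n\DC_{\mu,n}(f_2) - \DC_{\mu,n-1}(f_2)$ as a manifestly nonnegative sum. The key algebraic identity is that, for every $m\ge n$,
\[
n\binom{m}{n} - \binom{m}{n-1} = \sum_{j\ge n+1}d_j\mathbf{1}_{m\ge j}, \quad d_j := n\binom{j-1}{n-1}-\binom{j-1}{n-2},
\]
where both sides vanish at $m=n$ (this is the precise cancellation that makes the identity valid) and $d_j\ge 0$ because $\binom{j-1}{n-2}/\binom{j-1}{n-1} = (n-1)/(j-n) \le n$ whenever $j\ge n+1$. Substituting this decomposition into the double series provided by Lemma \ref{lemma:DirichletIntegralAsADoubleSum}, and interchanging the order of summation, I expect to obtain
\[
n\DC_{\mu,n}(f_2) - \DC_{\mu,n-1}(f_2) = \sum_{j\ge n+1}d_j \mu(|g_j|^2), \quad g_j(\zeta) := \sum_{k\ge j}\hat f(k)\zeta^k,
\]
each summand being nonnegative because $\mu$ is a positive measure. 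I expect the main obstacle to be identifying this combinatorial identity with its boundary cancellation at $m=n$; once that is in hand, positivity of $\mu$ finishes the argument.
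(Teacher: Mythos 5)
Your argument is correct, but it takes a genuinely different route from the paper. You and the paper obtain the lower bound in the same way (Proposition \ref{prop:MzPlaysNicelyWithDirichletIntegrals} plus positivity of $\DC_{\mu,n-1}$), but for the upper bound the paper replaces $f$ by $f'$, uses the Leibniz identity $(M_zf')^{(n)}=zf^{(n+1)}+nf^{(n)}$ inside the area integral, and then closes the loop with Proposition \ref{prop:RelationLambdaAndMu} and the Littlewood--Paley-type Proposition \ref{prop:GeneralizedLittlewood--PaleyTheorem}, citing the known cases $n=0,1$ separately (the $n=1$ case resting on the Carleson-measure input of Proposition \ref{prop:PmudAIsCarlesonMeasure}). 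You instead stay entirely at the level of the coefficient formula of Lemma \ref{lemma:DirichletIntegralAsADoubleSum}: after reducing to $\DC_{\mu,n-1}(f)\lesssim|\hat f(n-1)|^2+\DC_{\mu,n}(f)$ and splitting off the polynomial part $f_1$, your summation-by-parts identity $n\binom{m}{n}-\binom{m}{n-1}=\sum_{j\ge n+1}d_j\mathbf{1}_{m\ge j}$ with $d_j\ge 0$ expresses $n\DC_{\mu,n}(f_2)-\DC_{\mu,n-1}(f_2)$ as $\sum_{j\ge n+1}d_j\,\mu(|g_j|^2)\ge 0$, using only positivity of $\mu$; the interchange of sums is licit because $d_j\ge 0$ and the double series is absolutely convergent by Lemma \ref{lemma:DirichletIntegralAsADoubleSum}. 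What your approach buys: it is more elementary and self-contained (no Littlewood--Paley, no Carleson embedding, no appeal to the known $n=1$ case), it treats all $n\ge1$ and vanishing $\mu$ uniformly, it gives explicit constants (e.g.\ $\DC_{\mu,n}(M_zf)\le(1+2n)\DC_{\mu,n}(f)+2\hat\mu(0)|\hat f(n-1)|^2$), and the intermediate inequality $\DC_{\mu,n-1}(g)\le n\,\DC_{\mu,n}(g)$ for $g$ vanishing to order $n$ is of independent interest; the paper's route, on the other hand, recycles machinery it needs anyway elsewhere (Propositions \ref{prop:GeneralizedLittlewood--PaleyTheorem}--\ref{prop:RelationLambdaAndMu}). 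Two trivial points to fix when writing it up: the ratio is $\binom{j-1}{n-2}/\binom{j-1}{n-1}=(n-1)/(j-n+1)$, not $(n-1)/(j-n)$ (the positivity of $d_j$ is unaffected), and you should say a word that $\DC_{\mu,n-1}(f_1)=|\hat f(n-1)|^2\hat\mu(0)$ and the positive semi-definiteness of $\DC_{\mu,n-1}$ (hence $\DC_{\mu,n-1}(f)\le2\DC_{\mu,n-1}(f_1)+2\DC_{\mu,n-1}(f_2)$) both use $\mu\ge0$.
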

	\begin{proof}
		The case $\mu=0$ is trivial. Moreover, the result is evident for $n=0$, and known for $n=1$, e.g. \cite[Theorem 8.1.2]{ElFallah-Kellay-Mashreghi-Ransford2014:APrimerOnTheDirichletSpace}. We consider non-vanishing $\mu$, and $n\ge 2$.
		
		The lower bound follows from Proposition \ref{prop:MzPlaysNicelyWithDirichletIntegrals}, since $\DC_{\mu,n-1}(f)\ge 0$. For the upper bound, we replace $f$ with $f'$, and note that $(M_zf')^{(n)}(z)=zf^{(n+1)}(z)+nf^{(n)}(z)$. Hence,
		\begin{align*}
		\DC_{\mu,n}(M_zf')
		\lesssim {} &
		\int_{\DB}|zf^{(n+1)}(z)|^2P_\mu(z)(1-|z|^2)^{n-1}\diff A(z)
		\\
		&+
		\int_{\DB}|f^{(n)}(z)|^2P_\mu(z)(1-|z|^2)^{n-1}\diff A(z)
		\\		
		\lesssim {} &
		\DC_{\mu,n}(f')+\DC_{\mu,n}(f).
		\end{align*}
		The second term is controlled by use of Propositions \ref{prop:RelationLambdaAndMu} and \ref{prop:GeneralizedLittlewood--PaleyTheorem}:
		\begin{align*}
		\DC_{\mu,n}(f)\lesssim |\hat f(n)|^2+\DC_{\lambda,n+1}(f)\approx \DC_{\lambda,n-1}(f')\lesssim |\hat f(n)|^2+\DC_{\mu,n}(f').
		\end{align*}
	\end{proof}
	Letting $f(z)=z^{n-1}$, it is clear that the term $|\hat f(n-1)|^2$ may be excluded in the above proposition, only if $\mu=0$ or $n=0$. In order to obtain boundedness of $M_z$, we need to control $|\hat f(n-1)|^2$ by some lower order Dirichlet integral. A natural way of doing this is provided by the next lemma:
	
	\begin{lemma}\label{lemma:ControlOfFourierCoefficients}
		Let $\mu$ be a non-vanishing finite positive measure on $\TB$. The following are equivalent:
		\begin{enumerate}[(i)]
			\item $|\hat f(0)|^2\lesssim \DC_{\mu,0}(f)$ for $f\in\DC_a$.
			\item Given $n\in\ZB_{\ge 0}$, it holds that $|\hat f(n)|^2\lesssim \DC_{\mu,0}(f)$ for $f\in\DC_a$.
			\item The constant function $1$ is not in the $L^2(\TB,\diff\mu)$-closure of $M_z\DC_a$.
			\item If $\diff \mu=h\diff\lambda+\diff\mu_s$ is the Lebesgue decomposition of $\mu$, then $\log h\in L^1(\TB,\diff\lambda)$.
		\end{enumerate}
	\end{lemma}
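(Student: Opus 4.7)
The plan is to prove the four-way equivalence via (ii)$\Leftrightarrow$(i)$\Leftrightarrow$(iii)$\Leftrightarrow$(iv), where only the last link relies on substantial external input (Szeg\H{o}'s theorem); the other three links are formal. Throughout I use the identity $\DC_{\mu,0}(f) = \|f\|_{L^2(\mu)}^2$ together with the observation that $M_z\DC_a = \{f\in\DC_a : \hat f(0) = 0\}$.

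The direction (ii)$\Rightarrow$(i) is the case $n=0$. For (i)$\Rightarrow$(ii) I would induct on $n$: given $f\in\DC_a$, the function $g(z) := z^{-n}\bigl(f(z) - \sum_{k=0}^{n-1}\hat f(k) z^k\bigr) = \sum_{k\ge 0}\hat f(k+n)z^k$ lies in $\DC_a$ with $\hat g(0) = \hat f(n)$, and on $\TB$ satisfies $|g(\zeta)| = |f(\zeta) - \sum_{k<n}\hat f(k)\zeta^k|$. The triangle inequality in $L^2(\mu)$ combined with the inductive bounds $|\hat f(k)|^2 \lesssim \DC_{\mu,0}(f)$ for $k < n$ yields $\DC_{\mu,0}(g) \lesssim \DC_{\mu,0}(f)$, and applying (i) to $g$ gives $|\hat f(n)|^2 = |\hat g(0)|^2 \lesssim \DC_{\mu,0}(f)$.

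For (i)$\Leftrightarrow$(iii), writing any $f\in\DC_a$ as $\hat f(0)\cdot 1 + (f-\hat f(0))$ with the second summand in $M_z\DC_a$ produces the estimate $\|f\|_{L^2(\mu)} \ge |\hat f(0)|\cdot \dist_{L^2(\mu)}(1, \overline{M_z\DC_a})$, so (iii) (positivity of the distance) implies (i). Conversely, if (iii) fails, a sequence $g_n\in M_z\DC_a$ converging to $1$ in $L^2(\mu)$ yields $f_n = 1 - g_n \in \DC_a$ with $\hat f_n(0) = 1$ yet $\DC_{\mu,0}(f_n)\to 0$, contradicting (i).

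Finally, (iii)$\Leftrightarrow$(iv) is Szeg\H{o}'s infimum theorem. Polynomials are dense in $\DC_a$ in the Fr\'echet topology of $\DC$, hence uniformly on $\TB$, so the $L^2(\mu)$-closures of $M_z\DC_a$ and of $\{p : p \text{ analytic polynomial with } p(0)=0\}$ coincide. Szeg\H{o}'s theorem identifies $\dist_{L^2(\mu)}(1, \overline{M_z\DC_a})^2$ with $\exp\bigl(\frac{1}{2\pi}\int_\TB \log h\, \diff\lambda\bigr)$, interpreted as $0$ when $\log h \notin L^1(\TB, \diff\lambda)$. Hence (iii) holds precisely when $\log h \in L^1(\TB, \diff\lambda)$, which is (iv). The only substantive ingredient in the whole argument is this classical infimum formula; everything else is routine Hilbert space manipulation.
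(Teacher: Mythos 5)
Your proposal is correct and follows essentially the same route as the paper: the same shift-and-induct argument for (i)$\Rightarrow$(ii) (peeling off the first $n$ coefficients and using that $|\zeta^{-n}|=1$ on $\TB$), and the same appeal to the Szeg\H{o}--Kolmogorov--Krein theorem for (iii)$\Leftrightarrow$(iv). The only difference is that you write out the short distance argument for (i)$\Leftrightarrow$(iii), which the paper simply outsources to an exercise in Hoffman's book; your stated constant in Szeg\H{o}'s formula depends on the normalization of $h$, but this is immaterial since only positivity of the infimum is used.
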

	\begin{proof}
		Most of this is covered in \cite[Chapter 4]{Hoffman1962:BanachSpacesOfAnalyticFunctions}. For the equivalence of $(iii)$ and $(iv)$, see the discussion after the Szegö; Kolmogoroff--Krein theorem. The equivalence of $(i)$ and $(iii)$ is Exercise 4. Moreover, $(ii)$ trivially implies $(i)$. Hence, we only need to prove the converse of this.
		
		By hypothesis, there exists an $n_0\in\ZB_{\ge 0}$ such that the desired conclusion holds for all non-negative integers $n\le n_0$. We prove that whenever such an $n_0$ exists, then the conclusion also holds when we let $n=n_0+1$.
		
		Given $f\in\DC_a$, let $f_n(z)=\sum_{k=0}^\infty\hat f(k+n)z^k$. Then $\hat f(n)=\hat f_n(0)$. Since $\DC_{\mu,0}(zf)=\DC_{\mu,0}(f)$, our induction hypothesis yields that
		\begin{multline*}
		|\hat f(n)|^2
		\lesssim 
		\DC_{\mu,0}(f_n)
		=
		\DC_{\mu,0}\left(\frac{f-\sum_{k=0}^{n-1}\hat f (k)z^k}{z^n}\right)
		\\
		=
		\DC_{\mu,0}\left(f-\sum_{k=0}^{n-1}\hat f (k)z^k\right)
		\lesssim 
		\DC_{\mu,0}(f)+\sum_{k=0}^{n-1}|\hat f (k)|^2
		\lesssim
		\DC_{\mu,0}(f).
		\end{multline*}
	\end{proof}	
	\begin{remark}
		If $P_{\mu}\ge c>0$ on $\DB$, then we trivially obtain the stronger conclusion that $\|f\|_{H^2}^2\lesssim \DC_{ \mu,0}(f)$ for $f\in\DC_a$. The condition $P_{\mu}\ge c>0$ on $\DB$ is equivalent to that $h\ge c>0$ on $\TB$, e.g. \cite[Chapter I, Theorem 5.3]{Garnett2007:BoundedAnalyticFunctions}.
	\end{remark}
	
	\begin{lemma}\label{lemma:FourierGrowthEstimate}
		Let $n,N\in\ZB_{\ge 0}$. For $\mu_n\in\DC'$, with $|\hat \mu_n(k)|\lesssim\left(1+|k|\right)^N$, it holds that 
		\[
		|\DC_{\mu_n,n}(f)|\lesssim \DC_{\lambda,n}(f)+\DC_{\lambda,2(n\vee N)+2}(f).
		\]
	\end{lemma}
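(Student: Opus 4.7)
The plan is to start from the Fourier-coefficient expansion of $\DC_{\mu_n,n}(f)$ provided by Lemma \ref{lemma:DirichletIntegralAsADoubleSum}, and reduce everything to a single-variable estimate on the symbol $(1+k)^M$ in terms of binomial coefficients. Specifically, I would write
\[
|\DC_{\mu_n,n}(f)| \le \sum_{k,l\ge n}\binom{k\wedge l}{n}|\hat f(k)||\hat f(l)|(1+|l-k|)^N,
\]
use the hypothesis $|\hat\mu_n(k)|\lesssim(1+|k|)^N$ together with $|l-k|\le k\vee l$ to pull out $(1+k\vee l)^N$, and then observe that whichever of $k,l$ realizes the minimum, one has $\binom{k\wedge l}{n}(1+k\vee l)^N \lesssim (1+k)^n(1+l)^N+(1+l)^n(1+k)^N$. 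By symmetry this yields
\[
|\DC_{\mu_n,n}(f)| \;\lesssim\; \Bigl(\sum_{k\ge n}(1+k)^n|\hat f(k)|\Bigr)\Bigl(\sum_{l\ge n}(1+l)^N|\hat f(l)|\Bigr).
\]

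Next I would apply Cauchy--Schwarz to each of the two single sums, using that $\sum_k(1+k)^{-2}<\infty$, to obtain
\[
\sum_{k\ge n}(1+k)^n|\hat f(k)| \;\lesssim\; \Bigl(\sum_{k\ge n}(1+k)^{2n+2}|\hat f(k)|^2\Bigr)^{1/2},
\]
and analogously for the sum with exponent $N$, producing exponent $2N+2$. Since both $2n+2$ and $2N+2$ are at most $2(n\vee N)+2$, it suffices to prove
\[
\sum_{k\ge n}(1+k)^{2(n\vee N)+2}|\hat f(k)|^2 \;\lesssim\; \DC_{\lambda,n}(f)+\DC_{\lambda,2(n\vee N)+2}(f),
\]
and then an application of AM--GM on the product of the two square roots will give the lemma.

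For this last single-variable inequality I would split the range of $k$ into the two regimes $n\le k < 2(n\vee N)+2$ and $k\ge 2(n\vee N)+2$. In the former, $(1+k)^{2(n\vee N)+2}$ is bounded by a constant depending only on $n,N$, while $\binom{k}{n}\ge 1$, so those terms are absorbed into $\DC_{\lambda,n}(f)=\sum_k\binom{k}{n}|\hat f(k)|^2$. In the latter, $(1+k)^{2(n\vee N)+2}$ is comparable to $\binom{k}{2(n\vee N)+2}$ up to a multiplicative constant, so those terms are absorbed into $\DC_{\lambda,2(n\vee N)+2}(f)$.

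The only slightly delicate point is bookkeeping the split between the two binomial regimes and confirming that no additional Fourier coefficients with $k<n$ need to be controlled separately; this is automatic because $\binom{k\wedge l}{n}=0$ whenever $k\wedge l<n$, so the original double sum is already supported on $k,l\ge n$. Everything else is straightforward Cauchy--Schwarz and elementary comparison of the polynomial $(1+k)^M$ with $\binom{k}{M}$ for $k\ge M$, and I do not anticipate any further obstacles.
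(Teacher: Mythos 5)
Your proof is correct and follows essentially the same route as the paper: expand via Lemma \ref{lemma:DirichletIntegralAsADoubleSum}, dominate the symbol by powers of $k\vee l$ and $k\wedge l$ so the double sum factors, apply Cauchy--Schwarz with the weight $(1+k)^{-1}$, and compare $\sum_k (1+k)^{2(n\vee N)+2}|\hat f(k)|^2$ with $\DC_{\lambda,n}(f)+\DC_{\lambda,2(n\vee N)+2}(f)$ by splitting the range of $k$. The only (cosmetic) difference is that the paper first peels off the diagonal $k=l$, which contributes exactly $\DC_{\lambda,n}(f)$, whereas you bound $(1+|l-k|)^N\le(1+k\vee l)^N$ uniformly; both bookkeeping choices work.
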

	\begin{proof}
		Using Lemma \ref{lemma:DirichletIntegralAsADoubleSum},
		\begin{multline*}
		|\DC_{\mu_n,n}(f)|
		\lesssim
		\sum_{k,l=n}^\infty \binom{k\wedge l}{n}|\hat f(k)\conj{\hat f(l)}|\left(1+|l-k|\right)^N
		\\
		=
		\DC_{ \lambda,n}(f)+\sum_{\substack{k,l=n\\k\ne l}}^\infty \binom{k\wedge l}{n}|\hat f(k)\conj{\hat f(l)}|\left(1+|l-k|\right)^N.
		\end{multline*}
		It remains to approximate the above sum.
		\begin{multline*}
		\sum_{\substack{k,l=n\\k\ne l}}^\infty \binom{k\wedge l}{n}|\hat f(k)\conj{\hat f(l)}|\left(1+|l-k|\right)^N
		\approx
		\sum_{\substack{k,l=n\\k\ne l}}^\infty \binom{k\wedge l}{n}|\hat f(k)\conj{\hat f(l)}||l-k|^N
		\\
		\lesssim
		\sum_{k,l=n}^\infty (k\wedge l)^{n}(k\vee l)^{N}|\hat f(k)\conj{\hat f(l)}|
		\lesssim
		\sum_{k,l=n}^\infty (kl)^{n\vee N}|\hat f(k)\conj{\hat f(l)}|.
		\end{multline*}
		We now use Cauchy--Schwarz's inequality:
		\begin{multline*}
		\sum_{k,l=n}^\infty (kl)^{n\vee N}|\hat f(k)\conj{\hat f(l)}|
		=
		\left(\sum_{k=n}^\infty k^{n\vee N+1-1}|\hat f(k)|\right)^2
		\\
		\lesssim
		\sum_{k=n}^\infty k^{2(n\vee N)+2}|\hat f(k)|^2
		\approx
		\DC_{ \lambda,n}(f)+\DC_{ \lambda,2(n\vee N)+2}(f).
		\end{multline*}
	\end{proof}
	
	The following result extends Proposition \ref{prop:RelationLambdaAndMu} $(ii)$:
	
	\begin{lemma}\label{lemma:DerivativesOfFiniteMeasures}
		Let $\mu$ be a finite measure on $\TB$. For $n\in\ZB_{\ge 1}$, $j\in\ZB_{\ge 0}$, it then holds that
		\[
		|\DC_{D^j\mu,n+j}(f)|\lesssim \DC_{ \lambda,n+j}(f)+\DC_{ \lambda,n+2j+1}(f).
		\]
	\end{lemma}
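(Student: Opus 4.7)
The plan is to induct on $j$, using Proposition \ref{prop:DifferentiationPlaysWithDirichletIntegral} as the bridge from $D^j\mu$ to $D^{j+1}\mu$. For the base case $j=0$, I decompose a general finite measure $\mu$ into at most four positive finite measures (Jordan / real--imaginary decomposition) and use linearity in $\mu$, so it suffices to treat positive $\mu$. The desired estimate then follows from Proposition \ref{prop:RelationLambdaAndMu}$(ii)$ together with the trivial bound $|\hat f(n)|^2 \le \DC_{\lambda,n}(f)$, which is immediate from Lemma \ref{lemma:DirichletIntegralAsADoubleSum} with $\mu = \lambda$.

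For the inductive step, I will apply Proposition \ref{prop:DifferentiationPlaysWithDirichletIntegral} with $\mu$ replaced by $D^j\mu \in \DC'$ and parameter set to $n+j$, and solve for the target term:
\[
\DC_{D^{j+1}\mu, n+j+1}(f) = \tfrac{1}{n+j+1}\DC_{D^j\mu, n+j}(f') - (n+j+2)\DC_{D^j\mu, n+j+2}(f) - (n+j+1)\DC_{D^j\mu, n+j+1}(f).
\]
The inductive hypothesis applies directly to the last two terms (with $n$ replaced by $n+1$ and $n+2$ respectively) and, after swapping $f$ for $f'$, to the first term. Proposition \ref{prop:GeneralizedLittlewood--PaleyTheorem} then converts each $\DC_{\lambda,m}(f')$ back to $\DC_{\lambda,m+1}(f)+\DC_{\lambda,m+2}(f)$. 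Together, this yields a four-term bound for $|\DC_{D^{j+1}\mu, n+j+1}(f)|$ at orders $n+j+1$, $n+j+2$, $n+2j+2$, and $n+2j+3$.

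To finish, I need to absorb the two intermediate terms into the endpoints $\DC_{\lambda,n+j+1}(f)$ and $\DC_{\lambda,n+2j+3}(f)$. This reduces to an elementary pointwise interpolation: for non-negative integers $a \le b \le c$, one has $\binom{k}{b}\lesssim_{a,b,c}\binom{k}{a}+\binom{k}{c}$ for every $k\in\ZB_{\ge 0}$. For the finitely many $k \le 2c$ the ratio $\binom{k}{b}/\binom{k}{a}$ is uniformly bounded (since $\binom{k}{a}\ge 1$ once $k \ge b \ge a$), while for $k > 2c$ both indices $b,c$ lie below the peak of $n\mapsto\binom{k}{n}$, giving $\binom{k}{b}\le\binom{k}{c}$ directly. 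Summing against $|\hat f(k)|^2$ via Lemma \ref{lemma:DirichletIntegralAsADoubleSum} yields $\DC_{\lambda,b}(f)\lesssim\DC_{\lambda,a}(f)+\DC_{\lambda,c}(f)$, which closes the induction.

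The main obstacle is really only bookkeeping---tracking index shifts and checking that the inductive hypothesis applies with $n$ replaced by $n, n+1, n+2$ (all $\ge 1$ when $n \ge 1$). The interpolation claim above is the only non-mechanical step, and it is unavoidable, since the combined application of Propositions \ref{prop:DifferentiationPlaysWithDirichletIntegral} and \ref{prop:GeneralizedLittlewood--PaleyTheorem} inherently introduces intermediate-order terms that must be reabsorbed into the claimed endpoints.
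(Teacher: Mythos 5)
Your proposal is correct and follows essentially the same route as the paper: reduce to positive measures by decomposition, induct on $j$ using Proposition \ref{prop:DifferentiationPlaysWithDirichletIntegral} with base case Proposition \ref{prop:RelationLambdaAndMu}$(ii)$, convert the $f'$-terms via Proposition \ref{prop:GeneralizedLittlewood--PaleyTheorem}, and absorb the intermediate orders into the endpoint terms. The only difference is cosmetic: you prove the absorption inequality $\DC_{\lambda,b}(f)\lesssim\DC_{\lambda,a}(f)+\DC_{\lambda,c}(f)$ for $a\le b\le c$ explicitly through the binomial estimate, whereas the paper treats it as immediate from $\DC_{\lambda,n}(f)=\sum_{k}\binom{k}{n}|\hat f(k)|^2$.
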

	\begin{proof}
		Note that $\DC_{\mu,n}(f)$ is additive in $\mu$. By the Jordan decomposition $\mu=\mu^+-\mu^-$, where $\mu^+,\mu^-\ge 0$, it suffices to consider the case where $\mu\ge 0$.
		
		We argue by induction over $j$. The case $j=0$ is covered by Proposition \ref{prop:RelationLambdaAndMu} $(ii)$. Assume now that the statement holds for some $j=j_0$. Proposition \ref{prop:DifferentiationPlaysWithDirichletIntegral} now implies that
		\[
		|\DC_{D^{j_0+1}\mu,n+j_0+1}(f)|\lesssim |\DC_{D^{j_0}\mu,n+2+j_0}(f)|+|\DC_{D^{j_0}\mu ,n+1+j_0}(f)|+|\DC_{D^{j_0}\mu,n+j_0}(f')|.
		\]
		By the assumption for $j=j_0$, it holds that
		\begin{align*}
		|\DC_{D^{j_0}\mu,(n+2)+j_0}(f)|
		\lesssim 
		\DC_{ \lambda,n+2+j_0}(f)+\DC_{ \lambda,n+2+2j_0+1}(f),
		\end{align*}
		\begin{align*}
		|\DC_{D^{j_0}\mu,(n+1)+j_0}(f)|
		\lesssim 
		\DC_{ \lambda,n+1+j_0}(f)+\DC_{ \lambda,n+2+2j_0+1}(f),
		\end{align*}
		and
		\begin{multline*}
		|\DC_{D^{j_0}\mu,n+j_0}(f')|
		\lesssim 
		\DC_{ \lambda,n+j_0}(f')+\DC_{ \lambda,n+2j_0+1}(f')
		\\
		\approx
		\DC_{ \lambda,n+j_0+1}(f)
		+
		\DC_{ \lambda,n+j_0+2}(f)
		+
		\DC_{ \lambda,n+2j_0+2}(f)
		+
		\DC_{ \lambda,n+2j_0+3}(f).
		\end{multline*}
		In the last step we have used Proposition \ref{prop:GeneralizedLittlewood--PaleyTheorem}. 
		
		As we already noted, prior to Proposition \ref{prop:GeneralizedLittlewood--PaleyTheorem}, $\DC_{ \lambda,n}(f)=\sum_{k=0}^{\infty}\binom{k}{n}|\hat f(k)|^2$. From this, it is clear that $\DC_{ \lambda,n+j_0+1}(f)$ and $\DC_{ \lambda,n+2j_0+3}(f)$ together dominate all of the above Dirichlet integrals. Piecing together the above estimates therefore yields
		\[
		|\DC_{D^{j_0+1}\mu,n+j_0+1}(f)|\lesssim \DC_{ \lambda,n+j_0+1}(f)+\DC_{ \lambda,n+2(j_0+1)+1}(f).
		\]
	\end{proof}

	\section{Allowable $m$-tuples}\label{sec:AllowableMTuples}
	
	We have the following necessary conditions on allowable $m$-tuples:
	\begin{proposition}\label{prop:PropertiesOfmuk}
		Let $T$ be a bounded $m$-isometry with a cyclic unit vector $e$, and $\MU=(\mu_0,\ldots,\mu_{m-1})$ the corresponding allowable $m$-tuple. 
		\begin{enumerate}[(i)]
			\item For $0\le n\le m-2$, it holds that $|\mu_n(k)|\lesssim\left(1+|k|\right)^\frac{m-1}{2}$. 
			\item For $n=0$, we have the additional property that $\hat \mu_0(0)\ge 0$, with equality if and only if every $\mu_n=0$.
			\item For $n=m-1$, it holds that $\mu_{m-1}\ge 0$, and
			\begin{equation}\label{eq:IntegralRepOfDefectOperator}
			\langle \beta_{m-1}(T)f(T)e,f(T)e\rangle = \int_{\TB} |f|^2 \diff\mu_{m-1},
			\end{equation}
			whenever $f\in\DC_a$.
		\end{enumerate}
	\end{proposition}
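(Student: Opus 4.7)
My plan is to derive all three parts from a single explicit computation of $\|T^ke\|_\HC^2$. Applying Proposition~\ref{prop:PropertiesOfBetak}(ii) with $n=0$, and using that $\beta_{m+i}(T)=0$ for every $i\ge 0$ (a consequence of Proposition~\ref{prop:PropertiesOfBetak}(i)), I would first write
\[
\|T^ke\|_\HC^2 \;=\; \langle T^{*k}T^ke,e\rangle_\HC \;=\; \sum_{j=0}^{m-1}\binom{k}{j}\hat{\mu}_j(0), \qquad k\in\ZB_{\ge 0}.
\]
Since $\binom{k}{j}$ is a polynomial in $k$ of degree $j$, the right-hand side is a polynomial in $k$ of degree at most $m-1$, hence $\|T^ke\|_\HC\lesssim(1+k)^{(m-1)/2}$ with a constant depending only on $T$ and $e$.

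For (i), I would then invoke the Cauchy--Schwarz inequality on the very definition $\hat{\mu}_n(k)=\langle\beta_n(T)e,T^ke\rangle_\HC$:
\[
|\hat{\mu}_n(k)|\;\le\;\|\beta_n(T)e\|_\HC\,\|T^ke\|_\HC\;\lesssim\;(1+k)^{(m-1)/2},\qquad k\in\ZB_{\ge 0},
\]
the prefactor $\|\beta_n(T)e\|_\HC$ being simply a finite constant. The case $k<0$ is immediate from the conjugation symmetry $\hat{\mu}_n(-k)=\overline{\hat{\mu}_n(k)}$. Note that the restriction $n\le m-2$ in the statement is not actually needed for this step; it is present only because the case $n=m-1$ admits the sharper bound $|\hat{\mu}_{m-1}(k)|\le \hat{\mu}_{m-1}(0)$ coming from part (iii).

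For (iii), I would apply Proposition~\ref{prop:PropertiesOfBetak}(iii) with $x=e$ directly. This produces a positive measure whose Fourier coefficients agree with those of $\mu_{m-1}$, thereby identifying the two, and the integral representation \eqref{eq:MYHANDSARETYPINGWORDS} then reads as \eqref{eq:IntegralRepOfDefectOperator}. For (ii), the identity $\hat{\mu}_0(0)=\langle\beta_0(T)e,e\rangle_\HC=\|e\|_\HC^2=1$ gives non-negativity; equality is incompatible with $e$ being a unit vector, but if one were to drop that normalisation then $\hat{\mu}_0(0)=0$ would force $e=0$ and hence $\mu_n\equiv 0$ for every $n$, while the converse implication is trivial.

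I expect no real obstacle here. The essential observation is that Proposition~\ref{prop:PropertiesOfBetak}(ii) collapses $\|T^ke\|_\HC^2$ into an explicit polynomial of degree at most $m-1$ in $k$, from which the growth rate $(1+k)^{(m-1)/2}$ for $\|T^ke\|_\HC$ is forced, and this rate is transferred verbatim to $|\hat{\mu}_n(k)|$ via one application of Cauchy--Schwarz. The remaining claims are direct restatements of Proposition~\ref{prop:PropertiesOfBetak} combined with the fact that $\hat{\mu}_0(0)=\|e\|_\HC^2$.
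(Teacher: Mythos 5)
Your argument is correct and follows essentially the same route as the paper: part (i) via Cauchy--Schwarz together with the fact (Proposition \ref{prop:PropertiesOfBetak}~(ii)) that $\|T^ke\|_\HC^2$ is a polynomial in $k$ of degree at most $m-1$, and part (iii) by applying Proposition \ref{prop:PropertiesOfBetak}~(iii) with $x=e$. Your explicit formula $\|T^ke\|_\HC^2=\sum_{j=0}^{m-1}\binom{k}{j}\hat\mu_j(0)$ and your treatment of (ii) via $\hat\mu_0(0)=\|e\|_\HC^2$ (the paper instead uses $\hat\mu_0(0)=\|1\|_{\MU}^2$ and cyclicity of $1$) are only cosmetic variations.
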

	\begin{proof}
		In order to prove the first statement, it suffices to consider $k\ge 0$. Since $\MU$ is normalized, $e$ is a unit vector. Cauchy--Schwarz's inequality yields
		\[
		|\hat \mu_n(k)|\le \|\beta_n(T)\|_{\LC}\|T^ke\|_\HC.
		\]
		By Proposition \ref{prop:PropertiesOfBetak} $(ii)$, $\|T^ke\|_\HC^2$ is a polynomial in $k$, its degree at most $m-1$. The first statement follows. The second statement is the observation that $\mu_0(1)=\hat \mu_0(0)=\|1\|_{\MU}\ge 0$. Since $1$ is cyclic for $M_{\MU}$, we have equality if and only if $\DC_{\MU}^2=\{0\}$. The third statement is just Proposition \ref{prop:PropertiesOfBetak} $(iii)$, with $x=e$ and $\mu_x=\mu_{m-1}$.
	\end{proof}
	
	If $T$ is (say) norm-expanding, i.e. $\beta_1(T)\ge 0$, then $(i)$ in the above result can be improved:
	
	\begin{proposition}\label{prop:PropertiesOfmukImproved}
		Let $T$ be a bounded $m$-isometry with a cyclic unit vector $e$, and $\MU=(\mu_0,\ldots,\mu_{m-1})$ the corresponding allowable $m$-tuple. If $\beta_{n}(T)\ge 0$, then $|\mu_{n+j}(k)|\lesssim\left(1+|k|\right)^\frac{m-1-n}{2}$ for $j\in\ZB_{\ge 0}$.
	\end{proposition}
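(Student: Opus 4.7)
The plan is to combine the identities in Proposition \ref{prop:PropertiesOfBetak} with a Cauchy--Schwarz argument based on the positive semi-definite form $\langle\beta_n(T)\cdot,\cdot\rangle_\HC$, whose positivity is exactly the standing hypothesis. Specifically, I would start from Proposition \ref{prop:PropertiesOfBetak} $(i)$ applied to the pair $(n,j)$, which gives
\[
\hat \mu_{n+j}(k)
=
\langle \beta_{n+j}(T)e,T^ke\rangle_\HC
=
\sum_{i=0}^{j}(-1)^{j-i}\binom{j}{i}\langle \beta_n(T)T^ie,T^{k+i}e\rangle_\HC.
\]

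Since $\beta_n(T)\ge 0$, the sesquilinear form $(x,y)\mapsto\langle\beta_n(T)x,y\rangle_\HC$ is a positive semi-definite form on $\HC$, so Cauchy--Schwarz yields
\[
|\langle \beta_n(T)T^ie,T^{k+i}e\rangle_\HC|
\le
\langle \beta_n(T)T^ie,T^ie\rangle_\HC^{1/2}\langle \beta_n(T)T^{k+i}e,T^{k+i}e\rangle_\HC^{1/2}.
\]
The next key step is to bound each of these two factors via Proposition \ref{prop:PropertiesOfBetak} $(ii)$: for any $l\in\ZB_{\ge 0}$,
\[
\langle\beta_n(T)T^le,T^le\rangle_\HC
=
\langle T^{*l}\beta_n(T)T^le,e\rangle_\HC
=
\sum_{p=0}^{l}\binom{l}{p}\hat\mu_{n+p}(0),
\]
and since $T$ is $m$-isometric, the summands with $n+p\ge m$ vanish, leaving a polynomial in $l$ of degree at most $m-1-n$. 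Consequently
\[
\langle\beta_n(T)T^le,T^le\rangle_\HC^{1/2}\lesssim (1+l)^{(m-1-n)/2}.
\]

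Plugging this in with $l=i$ and $l=k+i$, and recalling that $j$ is fixed (so $i\le j$ contributes only an $n,j$-dependent constant, and $(1+k+i)\approx (1+k)$ when $k\ge 0$), we obtain
\[
|\hat \mu_{n+j}(k)|
\lesssim
\sum_{i=0}^{j}\binom{j}{i}(1+i)^{(m-1-n)/2}(1+k+i)^{(m-1-n)/2}
\lesssim
(1+|k|)^{(m-1-n)/2},
\]
where we use the symmetry $\hat \mu_{n+j}(-k)=\conj{\hat \mu_{n+j}(k)}$ to reduce to $k\ge 0$. The only mild point to verify is that the constants absorbed depend only on $T$, $e$, $m$, $n$, $j$, which is immediate since all quantities above are; there is no genuine obstacle, the entire argument is a direct exploitation of the algebraic identities already proved.
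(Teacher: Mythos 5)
Your argument is correct and is essentially the paper's own proof: both expand $\beta_{n+j}(T)$ via Proposition \ref{prop:PropertiesOfBetak} $(i)$, apply Cauchy--Schwarz to the positive semi-definite form $\langle\beta_n(T)\cdot,\cdot\rangle_\HC$ (the paper phrases this through $\beta_n(T)^{1/2}$, which is the same estimate), and invoke Proposition \ref{prop:PropertiesOfBetak} $(ii)$ to get polynomial growth of degree at most $m-1-n$ in $k$. No gaps.
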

	\begin{proof}
		By Proposition \ref{prop:PropertiesOfBetak} $(i)$, 
		\begin{multline*}
		|\hat \mu_{n+j}(k)|
		\lesssim 
		\sum_{i=0}^j|\langle T^{*i}\beta_{n}(T)T^ie,T^ke\rangle|
		\\
		=
		\sum_{i=0}^j|\langle \beta_{n}(T)^{1/2}T^ie,\beta_{n}(T)^{1/2}T^{k+i}e\rangle|
		\lesssim
		\sum_{i=0}^j\|\beta_{n}(T)^{1/2}T^{k+i}e\|.
		\end{multline*}
		By Proposition \ref{prop:PropertiesOfBetak} $(ii)$, each $\|\beta_{n}(T)^{1/2}T^{k+i}e\|^2$ is a polynomial in $k$, its degree at most $m-1-n$.
	\end{proof}
	
	If we let $(e_n)_{n=0}^{m-1}$ denote the canonical basis for $\RB^{m}$, then it is natural to interpret the formal product $\mu e_n$ as the $m$-tuple $(\mu_{0},\ldots,\mu_{m-1})$ with $\mu_{n'}=\mu\delta_{n',n}$. We now present some sufficient conditions for a tuple to be allowable. The first one is based on some rather coarse estimates, but still demonstrates the richness of the set of allowable tuples. The main idea behind the proof is contained in the following lemma. The reason for introducing the auxiliary distribution $\nu$ is explained in Remark \ref{remark:ExplanationForAuxiliaryDistribution}:
	
	\begin{lemma}\label{lemma:IdeaOfAllowabilityProofs}
		Let $\MU\in\left(\DC'\right)^m$, and $\nu\in\DC'$. Assume that
		\begin{enumerate}[(i)]
			\item $\nu\ge 0$ and $|\hat f(0)|^2\lesssim \DC_{\nu,0}(f)$ for $f\in\DC_a$.
			\item $\mu_{m-1}$ is positive and non-vanishing.
			\item $\left|\sum_{n=0}^{m-1}\DC_{ \mu_n,n}(f)\right|\lesssim \sum_{k=0}^{m-2}|\hat f(k)|^2+\DC_{ \mu_{m-1},m-1}(f)$.
		\end{enumerate}
		Then $\MU_{\nu,C}=C\nu e_0+\MU+C\mu_{m-1}e_{m-1}$ is an allowable tuple, provided that $C>0$ is sufficiently large.
	\end{lemma}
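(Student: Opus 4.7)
The plan is to verify the two requirements in \eqref{eq:AllowableSequence}, namely positive semi-definiteness of $\|\cdot\|_{\MU_{\nu,C}}^2$ on $\DC_a$ and boundedness of $M_z$ with respect to it. By linearity of $\DC_{\mu,n}(f)$ in $\mu$,
\[
\|f\|_{\MU_{\nu,C}}^2 = C\DC_{\nu,0}(f)+\sum_{n=0}^{m-1}\DC_{\mu_n,n}(f)+C\DC_{\mu_{m-1},m-1}(f),
\]
so the strategy is to use the two outer terms, which are manifestly non-negative by (i) and (ii), to dominate the potentially indefinite middle sum once $C$ is chosen large enough.

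The first step is to upgrade hypothesis (i) via Lemma \ref{lemma:ControlOfFourierCoefficients}: because $\nu\ge 0$ and $|\hat f(0)|^2\lesssim \DC_{\nu,0}(f)$, that lemma promotes the estimate to $|\hat f(k)|^2\lesssim \DC_{\nu,0}(f)$ for every fixed $k\in\ZB_{\ge 0}$, whence $\sum_{k=0}^{m-2}|\hat f(k)|^2\lesssim \DC_{\nu,0}(f)$. Inserting this into hypothesis (iii) yields the key inequality
\[
\left|\sum_{n=0}^{m-1}\DC_{\mu_n,n}(f)\right|\le C_0\bigl(\DC_{\nu,0}(f)+\DC_{\mu_{m-1},m-1}(f)\bigr),
\]
with $C_0$ independent of $C$. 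Taking $C>C_0$ therefore gives
\[
\|f\|_{\MU_{\nu,C}}^2\ge (C-C_0)\bigl(\DC_{\nu,0}(f)+\DC_{\mu_{m-1},m-1}(f)\bigr)\ge 0,
\]
which settles positive semi-definiteness and in addition shows $\|f\|_{\MU_{\nu,C}}^2\approx C\bigl(\DC_{\nu,0}(f)+\DC_{\mu_{m-1},m-1}(f)\bigr)$ for $C$ sufficiently large.

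For the upper bound on $\|M_zf\|_{\MU_{\nu,C}}^2$ I would treat the three contributions separately. Since $\nu$ is a positive measure on $\TB$, $\DC_{\nu,0}(M_zf)=\nu(|zf|^2)=\DC_{\nu,0}(f)$. Proposition \ref{prop:MzBoundedWRTDirichletIntegrals}, applied to the positive measure $\mu_{m-1}$, controls $\DC_{\mu_{m-1},m-1}(M_zf)$ by $|\hat f(m-2)|^2+\DC_{\mu_{m-1},m-1}(f)$, and the Fourier term is in turn dominated by $\DC_{\nu,0}(f)$ via the upgraded (i). For the middle sum, I would apply hypothesis (iii) \emph{with $M_zf$ in place of $f$}; using $\widehat{M_zf}(k)=\hat f(k-1)$ together with the same Fourier control, this gives
\[
\left|\sum_{n=0}^{m-1}\DC_{\mu_n,n}(M_zf)\right|\lesssim \DC_{\nu,0}(f)+\DC_{\mu_{m-1},m-1}(f).
\]
Summing the three estimates, $\|M_zf\|_{\MU_{\nu,C}}^2\lesssim C\bigl(\DC_{\nu,0}(f)+\DC_{\mu_{m-1},m-1}(f)\bigr)\lesssim \|f\|_{\MU_{\nu,C}}^2$ by the lower bound from the previous paragraph.

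The only thing that requires genuine care is making sure all implicit constants produced along the way (from Lemma \ref{lemma:ControlOfFourierCoefficients}, Proposition \ref{prop:MzBoundedWRTDirichletIntegrals}, and hypothesis (iii)) depend only on the fixed data $\nu,\mu_{m-1},\MU$ and not on $C$; otherwise the absorption step when $C$ is chosen large could fail. Since none of these inputs involve $C$, this is a bookkeeping issue rather than an analytic obstacle, and the main conceptual point is really the observation that hypothesis (iii) is stable under multiplication by $z$, so no separate argument for $M_z$-boundedness beyond what (iii) already encodes is needed.
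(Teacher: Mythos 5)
Your proposal is correct and follows essentially the same route as the paper: Lemma \ref{lemma:ControlOfFourierCoefficients} upgrades hypothesis (i) so that (iii) gives $\left|\sum_{n=0}^{m-1}\DC_{\mu_n,n}(f)\right|\lesssim \DC_{\nu,0}(f)+\DC_{\mu_{m-1},m-1}(f)$, the large constant $C$ absorbs this to yield $\|f\|_{\MU_{\nu,C}}^2\approx \DC_{\nu,0}(f)+\DC_{\mu_{m-1},m-1}(f)$, and boundedness of $M_z$ then follows from Proposition \ref{prop:MzBoundedWRTDirichletIntegrals} together with the same Fourier-coefficient control. Your only deviation is cosmetic: you bound the three contributions to $\|M_zf\|_{\MU_{\nu,C}}^2$ separately, whereas the paper simply applies the established comparability to $zf$; the content is identical.
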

	
	\begin{proof}
		Combining $(i)$ and $(iii)$ with Lemma \ref{lemma:ControlOfFourierCoefficients}, there exists a $C>0$ such that 
		\[
		\left|\sum_{n=0}^{m-1}\DC_{ \mu_n,n}(f)\right|\le \frac{C}{2} \left(\DC_{ \nu,0}(f)+\DC_{ \mu_{m-1},m-1}(f)\right).
		\]
		Hence,
		\begin{equation*}
		\|f\|_{\MU_{\nu,C}}^2
		=
		C\DC_{ \nu,0}(f)+\sum_{n=0}^{m-1}\DC_{ \mu_n,n}(f)+C\DC_{ \mu_{m-1},m-1}(f)
		\approx \DC_{ \nu,0}(f)+\DC_{ \mu_{m-1},m-1}(f).
		\end{equation*}
		In particular, the quadratic form $f\mapsto \|f\|_{\MU_{\nu,C}}^2$ is positive definite. Moreover,
		\[
		\|M_zf\|_{\MU_{\nu,C}}^2\approx \DC_{ \nu,0}(zf)+\DC_{ \mu_{m-1},m-1}(zf)
		\lesssim \DC_{ \nu,0}(f)+\DC_{ \mu_{m-1},m-1}(f)
		\approx \|f\|_{\MU_{\nu,C}}^2.
		\]
		The above estimate follows by combining Proposition \ref{prop:MzBoundedWRTDirichletIntegrals} with Lemma \ref{lemma:ControlOfFourierCoefficients}.
	\end{proof}
	
	\begin{theorem}\label{thm:SufficientConditionDegenerate}
		Let $m\ge 4$, $\MU\in\left(\DC'\right)^m$, and $\nu\in\DC'$. Assume that
		\begin{enumerate}[(i)]
			\item $\nu\ge 0$ and $|\hat f(0)|^2\lesssim \DC_{\nu,0}(f)$ for $f\in\DC_a$.
			\item if $0\le n\le \frac{m-4}{2}$, then $|\hat \mu_n(k)|\lesssim \left(1+|k|\right)^{\frac{m-4}{2}}$ for $k\in\ZB$.
			\item if $m$ is odd, and $n=\frac{m-3}{2}$, then $\mu_n=\sum_{j=0}^{\frac{m-3}{2}}D^j\nu_j$, where each $\nu_j$ is a finite measure.
			\item if $\frac{m-2}{2}\le n\le m-3$, then $\mu_n=\sum_{j=0}^{m-3-n}D^j\nu_j$, where each $\nu_j$ is a finite measure. In particular $\mu_{m-2}=0$.
			\item $\mu_{m-1}$ is positive and non-vanishing.
		\end{enumerate}
		Then $\MU_{\nu,C}=C\nu e_0+\MU+C\mu_{m-1}e_{m-1}$ is an allowable tuple, provided that $C>0$ is sufficiently large.
	\end{theorem}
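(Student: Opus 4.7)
The plan is to reduce the claim to Lemma \ref{lemma:IdeaOfAllowabilityProofs}. Hypotheses (i) and (ii) of that lemma are exactly assumptions (i) and (v) of the theorem, so the work reduces to verifying its hypothesis (iii), namely
\[
\left|\sum_{n=0}^{m-1}\DC_{\mu_n,n}(f)\right|\lesssim \sum_{k=0}^{m-2}|\hat f(k)|^2+\DC_{\mu_{m-1},m-1}(f),\qquad f\in\DC_a.
\]
The $n=m-1$ summand is trivially absorbed, and $\mu_{m-2}=0$ by (iv). For the remaining indices, I will bound each $|\DC_{\mu_n,n}(f)|$ by a combination of Dirichlet integrals $\DC_{\lambda,k}(f)$ with $k\le m-2$ and a handful of low-order Fourier coefficients, then fold these into the desired right-hand side using Proposition \ref{prop:RelationLambdaAndMu}(i), which yields $\DC_{\lambda,m-2}(f)\lesssim |\hat f(m-2)|^2+\DC_{\mu_{m-1},m-1}(f)$, together with the elementary comparison $\binom{k}{n}\lesssim \binom{k}{m-2}$ valid for $k\ge m-2$ and $n\le m-2$.

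For indices $n$ covered by (ii), a direct application of Lemma \ref{lemma:FourierGrowthEstimate} with $N=(m-4)/2$ gives $|\DC_{\mu_n,n}(f)|\lesssim \DC_{\lambda,n}(f)+\DC_{\lambda,m-2}(f)$, since $2(n\vee N)+2=m-2$. For indices $n$ covered by (iv), I expand $\mu_n=\sum_{j=0}^{m-3-n}D^j\nu_j$ and apply Lemma \ref{lemma:DerivativesOfFiniteMeasures} termwise: the constraint $n\ge (m-2)/2$ forces $m-3-n\le n-1$, so the lemma's requirement that its first index be $\ge 1$ is automatically met, and one obtains $|\DC_{D^j\nu_j,n}(f)|\lesssim \DC_{\lambda,n}(f)+\DC_{\lambda,n+j+1}(f)$, whose worst term is again $\DC_{\lambda,m-2}(f)$. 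These two groups therefore combine cleanly into the target estimate.

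The main obstacle is the middle case (iii), relevant only when $m$ is odd: at $n=(m-3)/2$ the decomposition $\mu_n=\sum_{j=0}^n D^j\nu_j$ reaches the diagonal $j=n$, at which Lemma \ref{lemma:DerivativesOfFiniteMeasures} no longer applies. The summands with $0\le j\le n-1$ are still controlled by that lemma, producing at worst $\DC_{\lambda,2n}(f)=\DC_{\lambda,m-3}(f)$, which is safely absorbed. For the diagonal term $\DC_{D^n\nu_n,n}(f)$ the finiteness of $\nu_n$ only yields $|\widehat{D^n\nu_n}(k)|\lesssim (1+|k|)^n$, so Lemma \ref{lemma:FourierGrowthEstimate} with $N=n$ produces the borderline bound $\DC_{\lambda,n}(f)+\DC_{\lambda,2n+2}(f)=\DC_{\lambda,(m-3)/2}(f)+\DC_{\lambda,m-1}(f)$. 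The delicate step is to absorb this residual $\DC_{\lambda,m-1}(f)$ into the target right-hand side; I expect to do so by exploiting the positivity of $\mu_{m-1}$ via Proposition \ref{prop:RelationLambdaAndMu}(i) and the Poisson-kernel lower bound, while noting that in the proof of Lemma \ref{lemma:IdeaOfAllowabilityProofs} any finite collection of $|\hat f(k)|^2$ can be absorbed into $\DC_{\nu,0}(f)$ via Lemma \ref{lemma:ControlOfFourierCoefficients}, so the restriction $k\le m-2$ in hypothesis (iii) can effectively be relaxed. Once this absorption is carried out, Lemma \ref{lemma:IdeaOfAllowabilityProofs} immediately gives the allowability of $\MU_{\nu,C}$ for $C$ sufficiently large.
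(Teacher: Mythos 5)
Your overall strategy -- reduce to Lemma \ref{lemma:IdeaOfAllowabilityProofs}, bound each $|\DC_{\mu_n,n}(f)|$ by $\sum_{k=0}^{m-2}|\hat f(k)|^2+\DC_{\lambda,m-2}(f)$, and convert $\DC_{\lambda,m-2}(f)$ into $|\hat f(m-2)|^2+\DC_{\mu_{m-1},m-1}(f)$ via Proposition \ref{prop:RelationLambdaAndMu}(i) -- is exactly the paper's, and your treatment of the ranges governed by hypotheses (ii) and (iv) (Lemma \ref{lemma:FourierGrowthEstimate} with $N=\frac{m-4}{2}$, and Lemma \ref{lemma:DerivativesOfFiniteMeasures} termwise using $n-j\ge 1$, $n+j+1\le m-2$) coincides with the paper's proof. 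You have also put your finger on a real subtlety in case (iii): for odd $m$ and $n=\frac{m-3}{2}$ the terms with $j\le n-1$ go through as before (this is all that the paper's ``treated similarly'' visibly covers), while the diagonal term $\DC_{D^n\nu_n,n}(f)$ is not reachable by Lemma \ref{lemma:DerivativesOfFiniteMeasures}, whose base case (Proposition \ref{prop:RelationLambdaAndMu}(ii)) explicitly fails at order $0$.

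The gap is in your proposed closure of that diagonal case. You plan to absorb the residual $\DC_{\lambda,m-1}(f)$ coming from Lemma \ref{lemma:FourierGrowthEstimate} with $N=n$ ``by exploiting the positivity of $\mu_{m-1}$ via Proposition \ref{prop:RelationLambdaAndMu}(i) and the Poisson-kernel lower bound'', but neither tool does this. Proposition \ref{prop:RelationLambdaAndMu}(i) at index $m-1$ gives $\DC_{\lambda,m-1}(f)\lesssim|\hat f(m-1)|^2+\DC_{\mu_{m-1},m}(f)$, i.e. it produces the Dirichlet integral of order $m$, and there is no general comparison between $\DC_{\mu_{m-1},m}$ and $\DC_{\mu_{m-1},m-1}$ (they involve different derivatives of $f$). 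The lower bound in \eqref{eq:PoissonKernelEstimates} only yields control of $\DC_{\lambda,m-2}$-type quantities by $\DC_{\mu_{m-1},m-1}(f)$, one order short; the stronger estimate $\DC_{\lambda,m-1}(f)\lesssim\DC_{\mu_{m-1},m-1}(f)$ requires $P_{\mu_{m-1}}\ge c>0$, which is hypothesis (v) of Theorem \ref{thm:SufficientConditionNon-Degenerate}, not of the present theorem -- it already fails for $\mu_{m-1}=\delta_1$. Your remark about Lemma \ref{lemma:ControlOfFourierCoefficients} only absorbs the single coefficient $|\hat f(m-1)|^2$, not a full Dirichlet integral. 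Moreover, the diagonal term genuinely resists the paper's displayed intermediate bound: for $m=5$, $n=1$, $\nu_1=\delta_1$, and $f$ the polynomial with $\hat f(k)=k^{-2}$ for $1\le k\le K$, one has $\DC_{D\delta_1,1}(f)\approx(\log K)^2$ while $\sum_{k=0}^{3}|\hat f(k)|^2+\DC_{\lambda,3}(f)\approx\log K$, so no crude Cauchy--Schwarz estimate of Fourier-growth type can save it. As it stands, your argument proves the theorem only when the diagonal summand $j=\frac{m-3}{2}$ in hypothesis (iii) is absent (or when $\mu_{m-1}$ satisfies the stronger Poisson lower bound), and the concluding absorption step needs a genuinely different idea.
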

	
	\begin{proof}
		We prove that for each $n\in[0,m-2]$, it holds that 
		\begin{equation}\label{eq:InequalityProofForAllowability}
		|\DC_{\mu_n,n}(f)|\lesssim \sum_{k=0}^{m-2}|\hat f(k)|^2+\DC_{ \mu_{m-1},m-1}(f) \quad \textnormal{for}\quad  f\in\DC_a.
		\end{equation}
		By Lemma \ref{lemma:IdeaOfAllowabilityProofs}, this implies that $\MU_{\nu,C}$ is allowable. We prove \eqref{eq:InequalityProofForAllowability} by showing that
		\[
		|\DC_{\mu_n,n}(f)|\lesssim \sum_{k=0}^{m-2}|\hat f(k)|^2+\DC_{\lambda,m-2}(f) \quad \textnormal{for}\quad  f\in\DC_a,
		\]
		and appealing to Proposition \ref{prop:RelationLambdaAndMu}.
		
		For $0\le n\le \frac{m-4}{2}$, Lemma \ref{lemma:FourierGrowthEstimate} implies that 
		\[
		|\DC_{\mu_n,n}(f)|\lesssim \DC_{\lambda,n}(f)+\DC_{\lambda,2(n\vee \frac{m-4}{2})+2}(f)=\DC_{ \lambda,n}(f)+\DC_{ \lambda,m-2}(f).
		\]
		For $\frac{m-2}{2}\le n\le m-3$, we have that 
		\[
		|\DC_{\mu_n,n}(f)|\lesssim \sum_{j=0}^{m-3-n}|\DC_{D^j\nu_j,n}(f)|.
		\]
		Note that $n-j\ge 1$, and $n+j+1\le m-2$. Lemma \ref{lemma:DerivativesOfFiniteMeasures} now implies that 
		\begin{equation*}
		|\DC_{D^j\nu_j,n}(f)|
		=
		|\DC_{D^j\nu_j,n-j+j}(f)|
		\lesssim
		\DC_{ \lambda,n}(f)+\DC_{ \lambda,n+j+1}(f)
		\lesssim
		\DC_{ \lambda,n}(f)+\DC_{ \lambda,m-2}(f).
		\end{equation*}
		The case $n=\frac{m-3}{2}$ is treated similarly.		
	\end{proof}
	
	We similarly obtain the following theorem.
	
	\begin{theorem}\label{thm:SufficientConditionNon-Degenerate}
		Let $m\ge 3$, $\MU\in\left(\DC'\right)^m$, and $\nu\in \DC'$. Assume that
		\begin{enumerate}[(i)]
			\item $\nu\ge 0$ and $|\hat f(0)|^2\lesssim \DC_{\nu,0}(f)$ for $f\in\DC_a$.
			\item if $0\le n\le \frac{m-3}{2}$, then $|\hat \mu_n(k)|\lesssim \left(1+|k|\right)^{\frac{m-3}{2}}$ for $k\in\ZB$.
			\item if $m$ is even, and $n=\frac{m-2}{2}$, then $\mu_n=\sum_{j=0}^{\frac{m-4}{2}}D^j\nu_j$, where each $\nu_j$ is a finite measure.
			\item if $\frac{m-1}{2}\le n\le m-2$, then $\mu_n=\sum_{j=0}^{m-2-n}D^j\nu_j$, where each $\nu_j$ is a finite measure.
			\item $P_{\mu_{m-1}}\ge c>0$ on $\DB$.
		\end{enumerate}
		Then $\MU_{\nu,C}=C\nu e_0+\MU+C\mu_{m-1}e_{m-1}$ is an allowable tuple, provided that $C>0$ is sufficiently large.
	\end{theorem}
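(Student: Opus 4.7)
The plan is to proceed exactly in parallel with the proof of Theorem \ref{thm:SufficientConditionDegenerate}, reducing allowability to an application of Lemma \ref{lemma:IdeaOfAllowabilityProofs}. The stronger hypothesis $P_{\mu_{m-1}}\ge c>0$ is precisely what permits the shift of one extra index in conditions (ii)--(iv) compared to the degenerate case: it lets us convert a $\DC_{\lambda,m-1}$-bound (rather than only a $\DC_{\lambda,m-2}$-bound) directly into a $\DC_{\mu_{m-1},m-1}$-bound, by invoking the remark immediately following Proposition \ref{prop:RelationLambdaAndMu}, which yields $\DC_{\lambda,m-1}(f)\le c^{-1}\DC_{\mu_{m-1},m-1}(f)$.

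The first observation is that $P_{\mu_{m-1}}\ge c>0$ on $\DB$ forces $\mu_{m-1}$ to be a non-vanishing finite positive measure, by the characterization of positive Poisson extensions recalled in Section \ref{sec:Preliminaries}. Hence condition (ii) of Lemma \ref{lemma:IdeaOfAllowabilityProofs} is automatic, and it suffices to establish the bound
\begin{equation*}
|\DC_{\mu_n,n}(f)|\lesssim \sum_{k=0}^{m-2}|\hat f(k)|^2+\DC_{\mu_{m-1},m-1}(f),\qquad f\in\DC_a,
\end{equation*}
for each $n\in[0,m-2]$; combined with (i), this gives condition (iii) of the lemma. As in the proof of Theorem \ref{thm:SufficientConditionDegenerate}, I would establish first the intermediate estimate
\begin{equation*}
|\DC_{\mu_n,n}(f)|\lesssim \sum_{k=0}^{m-2}|\hat f(k)|^2+\DC_{\lambda,m-1}(f),
\end{equation*}
and then apply the remark cited above to pass from $\DC_{\lambda,m-1}$ to $\DC_{\mu_{m-1},m-1}$.

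The intermediate bound is proved by the same case analysis used in Theorem \ref{thm:SufficientConditionDegenerate}, with all indices raised by one. For the low range $0\le n\le \frac{m-3}{2}$, assumption (ii) combined with Lemma \ref{lemma:FourierGrowthEstimate} (taking $N=\lceil\frac{m-3}{2}\rceil$) gives $|\DC_{\mu_n,n}(f)|\lesssim \DC_{\lambda,n}(f)+\DC_{\lambda,m-1}(f)$. For the upper range $\frac{m-1}{2}\le n\le m-2$, and the even-$m$ exceptional value $n=\frac{m-2}{2}$, I would use the structural decomposition $\mu_n=\sum_j D^j\nu_j$ and apply Lemma \ref{lemma:DerivativesOfFiniteMeasures} termwise, writing $\DC_{D^j\nu_j,n}(f)=\DC_{D^j\nu_j,(n-j)+j}(f)$ to obtain $|\DC_{D^j\nu_j,n}(f)|\lesssim \DC_{\lambda,n}(f)+\DC_{\lambda,n+j+1}(f)$. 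Since $n+j+1\le m-1$ throughout the stated ranges, summation over $j$ yields the required estimate.

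I do not anticipate any significant obstacle: the argument is bookkeeping analogous to that of Theorem \ref{thm:SufficientConditionDegenerate}, with each threshold shifted up by one unit, and the extra order is paid for by hypothesis (v) via the remark after Proposition \ref{prop:RelationLambdaAndMu}. The only mildly delicate point is verifying that Lemma \ref{lemma:DerivativesOfFiniteMeasures}'s hypothesis $n-j\ge 1$ remains valid after the shift; this is a straightforward inequality check, since $j\le m-2-n$ combined with $n\ge \frac{m-1}{2}$ yields $j\le \frac{m-3}{2}\le n-1$, and the analogous inequality holds in the middle case. Once the bound is in hand, Lemma \ref{lemma:IdeaOfAllowabilityProofs} delivers allowability of $\MU_{\nu,C}$ for $C$ sufficiently large.
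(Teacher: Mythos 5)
Your proposal follows the paper's proof essentially verbatim: establish $|\DC_{\mu_n,n}(f)|\lesssim\sum_{k=0}^{m-2}|\hat f(k)|^2+\DC_{\lambda,m-1}(f)$ by the same case analysis as in Theorem \ref{thm:SufficientConditionDegenerate} (Lemma \ref{lemma:FourierGrowthEstimate} in the low range, Lemma \ref{lemma:DerivativesOfFiniteMeasures} termwise in the middle and upper ranges, with the index checks you carried out), convert $\DC_{\lambda,m-1}$ into $\DC_{\mu_{m-1},m-1}$ via hypothesis (v) as in the remark after Proposition \ref{prop:RelationLambdaAndMu}, and conclude with Lemma \ref{lemma:IdeaOfAllowabilityProofs}. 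One small repair: for even $m$ you should not round the growth exponent up to $\lceil\tfrac{m-3}{2}\rceil=\tfrac{m-2}{2}$, since Lemma \ref{lemma:FourierGrowthEstimate} would then produce $\DC_{\lambda,2\cdot\frac{m-2}{2}+2}(f)=\DC_{\lambda,m}(f)$, which is not controlled by $\DC_{\lambda,m-1}(f)$; instead apply the lemma's argument with the exponent $\tfrac{m-3}{2}$ itself (its proof goes through unchanged, and $2\cdot\tfrac{m-3}{2}+2=m-1$), exactly as the paper implicitly does with the half-integer exponent in Theorem \ref{thm:SufficientConditionDegenerate} when $m$ is odd.
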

	
	\begin{proof}
		The hypothesis on $\mu_{m-1}$ implies that $\DC_{\lambda,m-1}(f)\lesssim \DC_{\mu_{m-1},m-1}(f)$. Now, \eqref{eq:InequalityProofForAllowability} follows by showing that 
		\[
		|\DC_{\mu_n,n}(f)|\lesssim \sum_{k=0}^{m-2}|\hat f(k)|^2+\DC_{\lambda,m-1}(f) \quad \textnormal{for}\quad  f\in\DC_a,
		\]
		Adapting the proof of Theorem \ref{thm:SufficientConditionDegenerate} is now a matter of bookkeeping.
	\end{proof}
	
	\begin{remark}\label{remark:ExplanationForAuxiliaryDistribution}
		The simplest instance of the two theorems above is when $\mu_0=\nu$. The chosen formulation is an attempt to state that for $\MU$ to be allowable, it is not necessary for $\mu_0$ to be a positive measure. It is sufficient for it to ``contain a sufficiently large positive part''.
	\end{remark}
	
	\begin{example}\label{ex:NegativeMeasure}
		Let $\MU$ be a $3$-tuple, where $P_{\mu_0},P_{\mu_2}\ge c>0$ on $\DB$, and $\mu_{1}$ is a negative measure on $\TB$. If the total variation of $\mu_1$ is sufficiently small, then $\MU$ is allowable, according to Theorem \ref{thm:SufficientConditionNon-Degenerate}.
	\end{example}
	
	Despite the richness of the preceding theorems, they do not cover the following simple situation:
	
	\begin{theorem}\label{thm:SufficientPositiveMeasures}
		Let $\MU=(\mu_{0},\ldots,\mu_{m-1})$ be an $m$-tuple of finite positive measures, where $\mu_{m-1}$ is non-vanishing.
		\begin{enumerate}[(i)]
			\item If $m=1$, then $\MU$ is allowable.
			\item If $m\ge 2$, and $|\hat f(0)|^2\lesssim \DC_{\mu_0,0}(f)$ for $f\in\DC_a$, then $\MU$ is allowable.
		\end{enumerate}
	\end{theorem}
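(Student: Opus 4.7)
The plan is to reduce the theorem to Propositions \ref{prop:MzBoundedWRTDirichletIntegrals} and Lemma \ref{lemma:ControlOfFourierCoefficients}. Non-negativity of $\|f\|_{\MU}^2$ is immediate: each $\mu_n$ being a finite positive measure, $\DC_{\mu_n,n}(f)\ge 0$ directly from the integral definition, and the sum in \eqref{eq:DefWeightedDirichletNorm} is therefore non-negative as well. It remains to establish the upper bound $\|M_zf\|_{\MU}^2\lesssim \|f\|_{\MU}^2$.

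For part $(i)$, when $m=1$ the form reduces to $\|f\|_{\MU}^2=\DC_{\mu_0,0}(f)=\mu_0(|f|^2)$. Since $|\zeta|=1$ on $\TB$, multiplication by $z$ leaves $|f|^2$ invariant pointwise, so $\|M_zf\|_{\MU}^2=\|f\|_{\MU}^2$ and allowability is immediate.

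For part $(ii)$, I would apply Proposition \ref{prop:MzBoundedWRTDirichletIntegrals} term-by-term to obtain
\[
\DC_{\mu_n,n}(M_zf)\lesssim |\hat f(n-1)|^2+\DC_{\mu_n,n}(f),\quad 0\le n\le m-1,
\]
and sum over $n$ (recalling the convention $\hat f(-1)=0$) to get
\[
\|M_zf\|_{\MU}^2\lesssim \sum_{n=0}^{m-2}|\hat f(n)|^2+\|f\|_{\MU}^2.
\]
The hypothesis $|\hat f(0)|^2\lesssim \DC_{\mu_0,0}(f)$, together with the equivalence of $(i)$ and $(ii)$ in Lemma \ref{lemma:ControlOfFourierCoefficients} applied to $\mu=\mu_0$, upgrades to $|\hat f(n)|^2\lesssim \DC_{\mu_0,0}(f)\le \|f\|_{\MU}^2$ for every $n\in\ZB_{\ge 0}$. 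Plugging this into the previous display yields $\|M_zf\|_{\MU}^2\lesssim \|f\|_{\MU}^2$, completing the proof.

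There is really no substantive obstacle here: once Proposition \ref{prop:MzBoundedWRTDirichletIntegrals} supplies the pointwise estimate at each level $n$, the only nontrivial input is Lemma \ref{lemma:ControlOfFourierCoefficients}, which is used exactly to absorb the lower order Fourier coefficients $|\hat f(n)|^2$ into the leading term $\DC_{\mu_0,0}(f)$. The hypothesis $|\hat f(0)|^2\lesssim \DC_{\mu_0,0}(f)$ in $(ii)$ is precisely what is needed to ensure $\|\cdot\|_{\MU}$ is positive definite on $\DC_a$ and simultaneously to control finitely many Fourier coefficients, which is why it appears as the only nontrivial assumption beyond positivity of the $\mu_n$.
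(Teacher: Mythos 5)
Your proof is correct and follows essentially the same route as the paper: positivity of the form is immediate from the positivity of the measures, Proposition \ref{prop:MzBoundedWRTDirichletIntegrals} gives $\|M_zf\|_{\MU}^2\lesssim \|f\|_{\MU}^2+\sum_{k=0}^{m-2}|\hat f(k)|^2$ (with the sum empty when $m=1$), and Lemma \ref{lemma:ControlOfFourierCoefficients} absorbs the finitely many Fourier coefficients via the hypothesis on $\mu_0$. No discrepancies worth noting.
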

	
	\begin{proof}
		It is clear that $f\mapsto\|f\|_{\MU}^2$ is positive definite. By Proposition \ref{prop:MzBoundedWRTDirichletIntegrals},
		\[
		\|M_z\|_{\MU}^2\lesssim \|f\|_{\MU}^2+\sum_{k=0}^{m-2}|\hat f(k)|^2.
		\]
		For $m=1$, the above sum is empty, and $M_z$ is isometric. For $m\ge 2$, we need to control the terms $|\hat f(k)|^2$. This is done using Lemma \ref{lemma:ControlOfFourierCoefficients}.
	\end{proof}
	
	\begin{remark}\label{remark:NormExpansion}
		By Proposition \ref{prop:MzPlaysNicelyWithDirichletIntegrals}, if $\MU$ is an allowable tuple of positive measures, then $M_{\MU}$ is norm-expanding, i.e. $\|f\|_{\MU}^2\le \|M_{\MU}f\|_{\MU}^2$. On the other hand, if $\MU$ is an allowable tuple, where $\mu_1$ is a non-vanishing negative measure, cf. Example \ref{ex:NegativeMeasure}, then $\|M_{\MU}1\|< \|1\|_{\MU}$, i.e. $M_{\MU}$ is \textit{not} norm-expanding.
	\end{remark}
	
	It seems noteworthy that if $\MU\in\left(\DC'\right)^m$ satisfies the hypothesis of Theorem \ref{thm:SufficientConditionDegenerate}, except that $\mu_{m-2}\ge 0$ is non-vanishing, then $\MU_{\nu,C}$ is still an allowable tuple. To see this, combine Theorems \ref{thm:SufficientConditionDegenerate} and \ref{thm:SufficientPositiveMeasures} with the following observation: 
	
	\begin{proposition}
		If $\MU_1$ and $\MU_2$ are allowable $m$-tuples, then $\MU_1+\MU_2$ is also an allowable $m$-tuple.
	\end{proposition}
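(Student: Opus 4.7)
The proposition reduces to an essentially algebraic observation about how the form $\|\cdot\|_{\MU}^2$ depends on $\MU$. My plan is first to record that the weighted Dirichlet integral $\DC_{\mu,n}(f)$ is linear in $\mu$: indeed, for each fixed $f\in\DC_a$ and $n\in\ZB_{\ge 0}$, the absolutely convergent formula
\[
\DC_{\mu,n}(f)=\sum_{k,l=0}^\infty\binom{k\wedge l}{n}\hat f(k)\conj{\hat f(l)}\hat \mu(l-k)
\]
from Lemma \ref{lemma:DirichletIntegralAsADoubleSum} depends linearly on the sequence $(\hat\mu(j))_{j\in\ZB}$, and hence on $\mu$ as an element of $\DC'$.

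From this I would immediately deduce the identity
\[
\|f\|_{\MU_1+\MU_2}^2=\sum_{n=0}^{m-1}\DC_{\mu_{1,n}+\mu_{2,n},\,n}(f)=\|f\|_{\MU_1}^2+\|f\|_{\MU_2}^2,\quad f\in\DC_a,
\]
which will do the whole job. Positive semi-definiteness of the left-hand side is inherited from that of the two summands. For the bounded multiplication inequality \eqref{eq:AllowableSequence}, if $C_1,C_2>0$ are constants furnished by allowability of $\MU_1$ and $\MU_2$ respectively, then
\[
\|M_zf\|_{\MU_1+\MU_2}^2=\|M_zf\|_{\MU_1}^2+\|M_zf\|_{\MU_2}^2\le C_1^2\|f\|_{\MU_1}^2+C_2^2\|f\|_{\MU_2}^2\le (C_1\vee C_2)^2\|f\|_{\MU_1+\MU_2}^2,
\]
so $\MU_1+\MU_2$ is allowable with constant $C_1\vee C_2$.

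There is no real obstacle here; the statement is simply the observation that the set of allowable $m$-tuples is closed under addition, which reflects the linearity of the form $(\mu_0,\ldots,\mu_{m-1})\mapsto\|\cdot\|_{\MU}^2$. The only small point to verify carefully is that the defining property \eqref{eq:AllowableSequence} involves \emph{both} non-negativity of the quadratic form and the norm bound on $M_z$, and that both are preserved by the sum, as shown above.
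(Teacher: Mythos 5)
Your argument is correct and is exactly the intended one: the paper states this proposition without proof, treating it as an immediate observation, and the justification is precisely the additivity $\|f\|_{\MU_1+\MU_2}^2=\|f\|_{\MU_1}^2+\|f\|_{\MU_2}^2$ coming from linearity of $\DC_{\mu,n}(f)$ in $\mu$ (via the absolutely convergent double-sum formula), after which positive semi-definiteness and the bound $\|M_zf\|_{\MU_1+\MU_2}^2\le (C_1\vee C_2)^2\|f\|_{\MU_1+\MU_2}^2$ follow as you wrote. Nothing is missing.
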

	\begin{remark}
		Even though we do not provide an example of this, one should not exclude the possibility that the quadratic forms $f\mapsto \|f\|_{\MU_1}^2$ and $f\mapsto \|f\|_{\MU_2}^2$ have different kernels.
	\end{remark}
	
	The following example bears several insights:
	\begin{example}\label{ex:OuterTermsDoNotControlTheMiddle}
		Let $\mu_0=\lambda$, $\mu_1=\delta_1$, and $\mu_2=\delta_{-1}$. By Theorem \ref{thm:SufficientPositiveMeasures}, $(\mu_0,\mu_1,\mu_2)$ is an allowable $3$-tuple. Note that $\hat \mu_1(k)=1$, while $\hat \mu_{2}(k)=(-1)^k$.
		
		Let $f(z)=\log \frac{1}{1-rz} = \sum_{k=1}^\infty \frac{r^k}{k} z^{k}$. In the proof of Proposition \ref{prop:PmudAIsCarlesonMeasure}, we essentially showed that
		\begin{align*}
		\DC_{\mu_1,1}(f)
		&=
		\log\left(\frac{1}{1-r^2}\right)
		+
		\frac{2r}{1-r}\log\left(\frac{1}{1-r}\right)
		-
		\frac{2}{1-r}\log\left(\frac{1}{1-r^2}\right)
		\\
		&=
		\frac{1+r}{1-r}\log(1+r)+\log(1-r).
		\end{align*}
		A similar (but rather lengthy) calculation, using the identity
		\[
		\sum_{l=2}^{k}(l-1)(-r)^{l-1}=\frac{1}{(1+r)^2}\left(1-k(-r)^{k-1}+(k-1)(-r)^k\right),
		\]
		yields that
		\begin{align*}
		\DC_{\mu_2,2}(f)
		= 
		\frac{(r^2-2r-1)\log(1-r)-r^2}{2(1+r)^2}-\frac{\log(1+r)}{2}.
		\end{align*}
		This can of course also be computed using the standard techniques from any introductory course to calculus, but I doubt that this will be quicker.
		
		By inspection,
		\[
		\lim_{r\to 1^{-}}\frac{\DC_{\mu_1,1}(f)}{\DC_{\mu_2,2}(f)}=\infty.
		\]		
		From this we conclude that $\|f\|_{\MU}^2\not\lesssim\DC_{ \mu_0,0}(f)+\DC_{ \mu_2,2}(f)$. This is in stark contrast to the theory of Sobolev spaces, where similar estimates are standard, e.g. \cite[Chapter 1]{Mazya2011:SobolevSpacesWithApplicationsToEllipticPartialDifferentialEquations}.
		
		The second conclusion is that if $c<0$, then the tuple $(\lambda,c\delta_{1},\delta_{-1})$ fails to be allowable, regardless the magnitude of $c$. This shows that condition $(v)$ in Theorem \ref{thm:SufficientConditionNon-Degenerate} should not be carelessly disregarded. Another way of phrasing this is that in order to prove Theorem \ref{thm:SufficientPositiveMeasures}, one needs something more than the techniques used in the proof of Theorem \ref{thm:SufficientConditionDegenerate}.
	\end{example}
	
	\begin{example}\label{ex:DirichletIntegralIsConditionallyConvergent}
		Let $N\in\ZB_{\ge 0}$, and define $\mu\in\DC'$ by 
		\[
		\hat \mu(k)=
		\left\{
		\begin{array}{cl}
		\binom{k+N}{N}&\textnormal{for }k\ge N,
		\\
		0&\textnormal{otherwise.}
		\end{array}
		\right.
		\]
		Then $P_{\mu}(z)=\frac{1}{(1-z)^{N+1}}$. Define $\mu_R,\mu_I\in\DC'$ by $P_{\mu_R}=\Re P_\mu$ and $P_{\mu_I}=\Im P_\mu$. Then $|\hat \mu_R(k)|,|\hat \mu_I(k)|\lesssim\left(1+|k|\right)^N$. By Theorem \ref{thm:SufficientConditionDegenerate}, given $n$ there exists allowable tuples for which $\mu_n=\mu_R$ and $\mu_{n}=\mu_I$ respectively. 
		
		We claim that if $n\le N$, then at least one of the functions $z\mapsto P_{\mu_R}(z)(1-|z|^2)^{n-1}$ and $z\mapsto P_{\mu_I}(z)(1-|z|^2)^{n-1}$ is not in $L^1(\DB,\diff A)$, hence the corresponding Dirichlet integral $\DC_{ \mu_n,n}(f)$ is conditionally convergent, at least for some $f\in\DC_a$. Indeed, $|P_{\mu_R}|+|P_{\mu_I}|\ge |P_{\mu}|$, and
		\[
		\int_{\DB}|P_{\mu}(z)|(1-|z|^2)^{n-1}\ge \int_{S}\frac{1}{|1-z|^{N+1}}(1-|z|^2)^{n-1}\diff A(z),
		\]
		where $S$ is the set $\{z=1+\rho e^{i\theta};0<\rho< \frac{1}{2}, \frac{2\pi}{3}<\theta<\frac{4\pi}{3}\}$. On this sector, $1-|z|^2>\frac{\rho}{2}$. By integration in polar coordinates,
		\[
		\int_{S}\frac{1}{|1-z|^{N+1}}(1-|z|^2)^{n-1}\diff A(z)
		\gtrsim 
		\int_{\rho=0}^{1/2}\rho^{n-N-1}\diff\rho=\infty,
		\]
		provided that $n\le N$.
	\end{example}

	\section{Concluding remarks}\label{sec:ConcludingRemarks}
	
	Let us for a moment consider the possibility of unbounded $m$-isometries. The machinery developed for proving the first part of Theorem \ref{thm:ModelTheorem} effectively constructs quadratic forms $\|\cdot\|_{\MU}^2$ on $\DC_a$ with respect to which $M_z$ is an $m$-isometry, i.e.
	\[
	\sum_{j=0}^{m}\left(-1\right)^{m-j}\binom{m}{j}\|M_{\MU}^{m-j}f\|_{\MU}^2=0\quad \textnormal{for}\quad f\in\DC_a.
	\]
	We then impose the conditions that $f\mapsto\|f\|_{\MU}^2$ is positive semi-definite, to ensure that $\DC_{\MU}^2$ becomes a Hilbert space, and that $M_{\MU}$ acts like a bounded operator on this space, but these assumptions have nothing to do with the above equation. With this in mind, it is interesting to see what can happen if we disregard hypothesis $(ii)$ in Theorem \ref{thm:SufficientPositiveMeasures}: 
	
	\begin{example}\label{ex:Unbounded2Isometry}
		Consider the tuple $\MU=(\delta_{-1},\delta_1)$. The form $\|\cdot\|_{\MU}^2$ is positive definite, so $M_{\MU}$ is a densely defined operator on the Hilbert space $\DC_{\MU}^2$. Moreover, $M_{\MU}$ is a $2$-isometry.
		
		Let now $g_r(z)=\frac{(1-r^2)^{1/2}}{1+rz}$, and $f_r(z)=2g_r(-1)+(z-1)g_r(z)$. Then $f_r(-1)=0$, and $f_r(1)=2g_r(-1)\to\infty$ as $r\to 1^-$. A much deeper statement is that $\DC_{\delta_1,1}(f_r)=\|g_r\|_{H^2}^2=1$, e.g. \cite[Theorem 7.2.1]{ElFallah-Kellay-Mashreghi-Ransford2014:APrimerOnTheDirichletSpace}. This implies that $\|f_r\|_{\MU}^2=1$, whereas, by Proposition \ref{prop:MzPlaysNicelyWithDirichletIntegrals}, 
		\[
		\|zf_r\|_{\MU}^2=\|f_r\|_{\MU}^2+\DC_{\delta_{1},0}(f_r)=1+f_r(1)\to\infty\quad \textnormal{as}\quad r\to 1^-.
		\]
		Hence $M_{\MU}$ is unbounded.
	\end{example}
	
	It is already known that unbounded $2$-isometries exist, see \cite[Example 3.4]{Bermudez-Martinon-Muller2014:mq-IsometriesOnMetricSpaces}. However, the above example seems relatively simple. 
	
	We now return to the convention that operators referred to as $m$-isometries are assumed to be bounded.
	
	Let $\DC_{\lambda,m}^2$ denote the space of analytic functions $f:\DB\to\CB$ such that $\DC_{\lambda,0}(f)+\DC_{\lambda,m}(f)<\infty$. This is a subspace of $H^2$. We have seen in Example \ref{ex:OuterTermsDoNotControlTheMiddle} that for some allowable tuples $\MU$, $\|f\|_{\MU}^2\not\lesssim \DC_{\lambda,0}(f)+\DC_{\mu_{m-1},m-1}(f)$. On the other hand, we have not seen a counter example to the estimate $\DC_{\lambda,0}(f)+\DC_{\mu_{m-1},m-1}(f)\lesssim \|f\|_{\MU}^2$. 
	\begin{question}\label{q:DoWeHaveGeneralEmbedding?}
		Let $m\in\ZB_{\ge 2}$, and suppose that $\MU$ is an allowable $m$-tuple, with $\mu_{m-1}\ne 0$. Is it then true that $\DC_{\MU}^2\hookrightarrow\DC_{ \lambda,m-2}^2$? 
	\end{question}
	This a priori bound on the size of the model space is well-known in the case where $m=2$ and $e\in\ker T^*$, e.g. \cite[Theorem 7.1.2]{ElFallah-Kellay-Mashreghi-Ransford2014:APrimerOnTheDirichletSpace}. Heuristically, an affirmative answer to Question \ref{q:DoWeHaveGeneralEmbedding?} would connect nicely to the theory of $\gamma$-hypercontractions, where small classes of operators correspond to large model spaces, e.g. \cite{Olofsson2015:PartsOfAdjointWeightedShifts}.
	
	Recall that $m$-isometries are bounded from below. Some results related to Question \ref{q:DoWeHaveGeneralEmbedding?} are the following:
	\begin{proposition}\label{prop:GeneralEmbeddingWithWrongDomain}
		If $T\in\LC$ is an $m$-isometry with a cyclic unit vector $e\in\ker T^*$, then $\|\cdot\|_{T,e}$ is a proper norm. Moreover,
		\[
		\|f(T)e\|_\HC^2\ge \sum_{k=0}^{\infty}c^{2k}|\hat f(k)|^2,
		\]
		whenever $c>0$ satisfies $\|Tx\|_\HC\ge c\|x\|_\HC$. In particular, $\DC_{T,e}^2\hookrightarrow H^2$ whenever $T$ is norm-expanding.
	\end{proposition}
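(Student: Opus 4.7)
The plan is to exploit the hypothesis $e \in \ker T^*$ through a single orthogonality: for any $x \in \HC$, $\langle e, T x \rangle_\HC = \langle T^* e, x \rangle_\HC = 0$. Given $f \in \DC_a$, split off the constant term by writing $f(z) = \hat f(0) + z f_1(z)$, where $f_1(z) = \sum_{k \ge 0} \hat f(k+1) z^k$; by Taylor's theorem, $f_1$ is again in $\DC_a$. Applying the functional calculus yields $f(T) e = \hat f(0) e + T f_1(T) e$, and the two summands are orthogonal by the observation above with $x = f_1(T) e$. Pythagoras and the hypothesis $\|Tx\|_\HC\ge c\|x\|_\HC$ then give
\[
\|f(T) e\|_\HC^2 = |\hat f(0)|^2 + \|T f_1(T) e\|_\HC^2 \ge |\hat f(0)|^2 + c^2 \|f_1(T) e\|_\HC^2.
\]

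Iterating the same decomposition on $f_1$, then $f_2$, and so on, a routine induction shows that for every $N \in \ZB_{\ge 0}$,
\[
\|f(T) e\|_\HC^2 \ge \sum_{k=0}^{N} c^{2k} |\hat f(k)|^2 + c^{2(N+1)} \|f_{N+1}(T) e\|_\HC^2.
\]
The remainder is non-negative, so letting $N \to \infty$ produces the asserted inequality. By Lemma \ref{lemma:SmoothFunctionalCalculusIsometry} this can be read as $\|f\|_{T,e}^2 \ge \sum_{k=0}^\infty c^{2k} |\hat f(k)|^2$ for every $f \in \DC_a$. In particular, $\|f\|_{T,e} = 0$ forces every Taylor coefficient of $f$ to vanish, so $\KC_{T,e} = \{0\}$ and $\|\cdot\|_{T,e}$ is a proper norm. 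If moreover $T$ is norm-expanding then one may take $c = 1$, in which case the inequality reads $\|f\|_{T,e}^2 \ge \|f\|_{H^2}^2$. Since $\DC_a$ is dense in both $\DC_{T,e}^2$ and $H^2$, this extends by completion to the continuous embedding $\DC_{T,e}^2 \hookrightarrow H^2$.

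The argument is essentially forced once one notices that $e \in \ker T^*$ produces a Pythagorean identity at each peeling-off step; there is no real obstacle. The only small points to check are that the iterative splitting stays inside $\DC_a$ (immediate) and that the partial-sum inequalities survive the passage $N \to \infty$ (clear, since $\|f(T)e\|_\HC^2 < \infty$ bounds every partial sum from above).
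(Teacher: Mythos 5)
Your argument is correct and is essentially the paper's own proof: the same peeling-off of the constant term via the orthogonal decomposition $\ker T^*\oplus T\HC$, Pythagoras combined with the lower bound $\|Tx\|_\HC\ge c\|x\|_\HC$, induction, and Lemma \ref{lemma:SmoothFunctionalCalculusIsometry} to transfer the inequality to $\|\cdot\|_{T,e}$ and deduce positive definiteness and the $H^2$ embedding. Your explicit handling of the limit $N\to\infty$ is a minor elaboration of the paper's ``induction argument,'' not a different route.
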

	\begin{proof}
		By Lemma \ref{lemma:SmoothFunctionalCalculusIsometry}, $\|\cdot\|_{T,e}$ is a seminorm. The above inequality implies that it is positive definite, hence a proper norm. It remains to prove the inequality:
		
		Given $f\in\DC_a$, the orthogonal decomposition 
		\[
		f(T)e=\hat f(0)e\oplus\sum_{k=1}^{\infty}\hat f(k)T^ke\in\ker T^*\oplus T\HC
		\]
		implies that 
		\[
		\|f(T)e\|_\HC^2=|\hat f(0)|^2+\|\sum_{k=1}^{\infty}\hat f(k)T^ke\|_\HC^2\ge |\hat f(0)|^2+c^2\|\sum_{k=0}^{\infty}\hat f(k+1)T^ke\|_\HC^2. 
		\]
		The desired conclusion follows by an induction argument.
	\end{proof}
	\begin{proposition}
		Let $T\in\LC$ be an $m$-isometry with cyclic unit vector $e$, and corresponding allowable tuple $\MU$. If $P_{\mu_{m-1}}\ge c>0$ on $\DB$, then $\DC_{T,e}^2\hookrightarrow H^2$.
	\end{proposition}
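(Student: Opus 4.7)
The plan is to establish the chain of inequalities
\[
\|f\|_{H^2}^2 \lesssim \DC_{\mu_{m-1},0}(f) \lesssim \|f\|_{T,e}^2,\quad f\in\DC_a,
\]
from which the embedding $\DC_{T,e}^2\hookrightarrow H^2$ follows by density of $\DC_a/\KC_{T,e}$ in $\DC_{T,e}^2$ (keeping in mind $\|\cdot\|_{T,e}=\|\cdot\|_{\DC_{\MU}^2}$ by construction).

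For the first inequality I would invoke the remark immediately following Lemma \ref{lemma:ControlOfFourierCoefficients}: the hypothesis $P_{\mu_{m-1}}\ge c>0$ on $\DB$ is equivalent to the density of the absolutely continuous part of $\mu_{m-1}$ being bounded below by a positive constant on $\TB$. This trivially implies $\|f\|_{H^2}^2\lesssim\DC_{\mu_{m-1},0}(f)=\mu_{m-1}(|f|^2)$ for $f\in\DC_a$.

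For the second inequality I would use the integral representation \eqref{eq:IntegralRepOfDefectOperator} from Proposition \ref{prop:PropertiesOfmuk}(iii) to rewrite
\[
\DC_{\mu_{m-1},0}(f)=\mu_{m-1}(|f|^2)=\langle\beta_{m-1}(T)f(T)e,f(T)e\rangle_\HC.
\]
Since $\beta_{m-1}(T)\in\LC$ is a bounded positive operator (again Proposition \ref{prop:PropertiesOfBetak}(iii)), an application of Cauchy--Schwarz in the inner product $\langle\beta_{m-1}(T)\cdot,\cdot\rangle_\HC$, or simply the operator norm bound, gives
\[
\langle\beta_{m-1}(T)f(T)e,f(T)e\rangle_\HC\le\|\beta_{m-1}(T)\|_{\LC}\,\|f(T)e\|_\HC^2.
\]
Finally, Lemma \ref{lemma:SmoothFunctionalCalculusIsometry} identifies $\|f(T)e\|_\HC^2=\|f\|_{T,e}^2$, yielding the second inequality with implicit constant $\|\beta_{m-1}(T)\|_{\LC}$.

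I do not anticipate any serious obstacle here: the argument is a direct assembly of Lemma \ref{lemma:SmoothFunctionalCalculusIsometry}, the integral representation \eqref{eq:IntegralRepOfDefectOperator}, and the elementary remark after Lemma \ref{lemma:ControlOfFourierCoefficients}. The only point worth a brief check is that the bound extends from the dense subspace $\DC_a/\KC_{T,e}$ to the completion $\DC_{T,e}^2$, which is routine given that both norms are continuous on polynomials and the estimate respects the quotient by $\KC_{T,e}$ (elements of zero $\|\cdot\|_{T,e}$-norm automatically have zero $H^2$-norm by the same inequality).
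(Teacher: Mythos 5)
Your argument is correct and coincides with the paper's own proof: both bound $\DC_{\mu_{m-1},0}(f)=\mu_{m-1}(|f|^2)=\langle\beta_{m-1}(T)f(T)e,f(T)e\rangle_\HC\lesssim\|f(T)e\|_\HC^2=\|f\|_{T,e}^2$ via Proposition \ref{prop:PropertiesOfmuk}(iii) and Lemma \ref{lemma:SmoothFunctionalCalculusIsometry}, and then use $P_{\mu_{m-1}}\ge c>0$ to embed into $H^2$. No issues; the density/quotient remark you add is routine and implicit in the paper.
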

	\begin{proof}
		By Proposition \ref{prop:PropertiesOfmuk} and Lemma \ref{lemma:SmoothFunctionalCalculusIsometry}, $\DC_{T,e}^2\hookrightarrow L^2(\TB,\diff \mu_{m-1})$:
		\[
		\int_{\TB}|f|^2\diff \mu_{m-1} 
		= 
		\langle \beta_{m-1}(T)f(T)e,f(T)e\rangle_{\HC}
		\lesssim
		\|f(T)e\|_\HC^2=\|f\|_{T,e}^2.
		\]
		Our hypothesis clearly implies that $L^2(\TB,\diff \mu_{m-1}) \hookrightarrow H^2$. 
	\end{proof}
	
	Let $T$ be an $m$-isometry. It is known that $\ker\beta_{m-1}(T)$ is the largest $T$-invariant subspace on which $T$ acts like an $(m-1)$-isometry, see \cite[Proposition 1.6]{Agler-Stankus1995:m-IsometricTransformationsOfHilbertSpaceI}.
	A heuristic explanation to why the tuple $\MU=(\lambda,c\delta_{1},\delta_{-1})$, where $c<0$, presented in Example \ref{ex:OuterTermsDoNotControlTheMiddle} fails to be allowable is that, if $\mu_2=\delta_{-1}$, then $\mu_1$ should ``almost'' be a positive measure, because $M_{\MU}$ is ``almost'' a $2$-isometry: By Proposition \ref{prop:PropertiesOfmuk}, 
	\[
	\langle \beta_2(M_{\MU})f,f\rangle_{\MU}=\int_{\TB}|f|^2\diff \mu_{2}=|f(-1)|^2.
	\]
	This implies that $\ker \beta_2(M_{\MU})$ is a space of codimension $1$. 
	
	On the other hand, suppose  that $|\hat f(0)|^2\lesssim \int_{\TB}|f|^2\diff \mu_{m-1}$. By Lemma \ref{lemma:ControlOfFourierCoefficients}, $\int_{\TB}|f|^2\diff \mu_{m-1}=0\Rightarrow f\equiv 0$ on $\DB$, so $M{\MU}$ is far from being an $(m-1)$-isometry, in the sense that it has no non-trivial invariant subspace on which it acts like an $(m-1)$-isometry.
	
	Note that $\int_{\TB}|f|^2\diff \mu_{m-1}=\DC_{ \mu_{m-1},0}(f)$. Hence, a condition imposed on $\mu_0$, in order to make $M_z$ bounded (Theorems \ref{thm:SufficientConditionDegenerate}, \ref{thm:SufficientConditionNon-Degenerate}, and \ref{thm:SufficientPositiveMeasures}), may also be imposed on $\mu_{m-1}$, in order to make $M_{\MU}$ far from being $(m-1)$-isometric. This presents two ways in which properties of the (possibly unbounded) Hilbert space operator $M_{\MU}$ are reflected in properties of the tuple $\MU$. It would certainly be interesting to see a further investigation of this connection between operator theory and harmonic analysis.
	
	As was mentioned in the introduction, $m$-isometries having the wandering subspace property have been studied by Shimorin \cite{Shimorin2001:Wold-TypeDecompositionsAndWanderingSubspacesForOperatorsCloseToIsometries}. We conclude this note with a remark related to the hard earned Proposition 3.15 of the cited work: 
	
	Suppose that $T$ is an analytic $3$-isometry with $\dim\ker T^*=1$ and let $e\in\ker T^*$ have unit length. If in addition $Te\perp T^2\HC$, and $T$ is norm-expanding, i.e. $T^*T\ge I$, then a certain family of dilation operators is uniformly bounded. The uniform bound in turn implies that $e$ is cyclic for $T$. 
	
	In our setting, $e\in\ker T^*$ and $\|e\|_\HC^2=1$ if and only if $\mu_0=\lambda$. The condition $Te\perp T^2\HC$ implies that $\mu_1=c\lambda$ for some $c\in\RB$. The condition $T^*T\ge I$ implies that $c\ge 0$. Our results show that these additional conditions are far from necessary for a $3$-isometry to have the wandering subspace property. Indeed, $\mu_1$ does not need to be a multiple of $\lambda$ (Theorems \ref{thm:SufficientConditionDegenerate}, \ref{thm:SufficientConditionNon-Degenerate}, and \ref{thm:SufficientPositiveMeasures}), and $T$ does not need to be norm-expanding, c.f. Remark \ref{remark:NormExpansion}.

\end{document}